\newcommand{\R}{{\mathbb R}}
\newcommand{\N}{{\mathbb N}}
\renewcommand{\H}{{\mathcal H}}
\renewcommand{\phi}{\varphi}
\renewcommand{\ge }{\geqslant}
\renewcommand{\geq }{\geqslant}
\renewcommand{\le }{\leqslant}
\renewcommand{\leq }{\leqslant}
\def\neweq#1{\begin{equation}\label{#1}}
\def\endeq{\end{equation}}
\def\eq#1{(\ref{#1})}
\newtheorem{theorem}{Theorem}
\newtheorem{proposition}[theorem]{Proposition}
\newtheorem{lemma}[theorem]{Lemma}
\newtheorem{definition}{Definition}
\begin{document}

\title[]{Loss of energy concentration \\ in nonlinear evolution beam equations}

\author{Maurizio GARRIONE - Filippo GAZZOLA}
\date{\today}


\begin{abstract}
Motivated by the oscillations that were seen at the Tacoma Narrows Bridge, we introduce the notion of solutions with a \emph{prevailing mode} for the nonlinear evolution beam equation
$$
u_{tt} + u_{xxxx} + f(u)= g(x, t)
$$
in bounded space-time intervals. We give a new definition of \emph{instability} for these particular solutions, based on the loss of energy concentration on their prevailing mode. We distinguish between two different forms of energy transfer, one \emph{physiological} (unavoidable and depending on the nonlinearity) and one due to the insurgence of instability. 
We then prove a theoretical result allowing to reduce the study of this kind of infinite-dimensional stability to that of a finite-dimensional approximation. With this background, we study the occurrence of instability for three different kinds of nonlinearities $f$ and for some forcing terms $g$, highlighting some of their structural properties and performing some numerical simulations.
\end{abstract}

\maketitle
\noindent
\textbf{Keywords:} Initial-boundary value problems, beam equation, stability, Galerkin approximation
\\
\textbf{AMS 2010 Subject Classification:} 35C10, 35G31, 34D20, 74K10

\section{Introduction}

The linear beam equation
\neweq{cdbeam}
u_{tt}+u_{xxxx}+\gamma^2 u=0, \quad \mbox{for }(x,t)\in(0,\pi)\times(0,\infty), \quad \gamma \in \R,
\endeq
with Cauchy-Navier initial-boundary conditions
\begin{equation}\label{IBC}
\begin{array}{ll}
u(0,t)=u(\pi,t)=u_{xx}(0,t)=u_{xx}(\pi,t)=0\quad & \mbox{for }t\in(0,\infty)\\
u(x,0)=u_0(x)\, ,\quad u_t(x,0)=u_1(x)\quad & \mbox{for }x\in(0,\pi),
\end{array}
\end{equation}
can be solved by separating variables. In particular, if $u_0(x)$ and $u_1(x)$ are both proportional to $\sin(jx)$ for some positive integer $j$, then the solution of \eqref{cdbeam}-\eqref{IBC} has the form
$$
u(x, t)= A\sin(\alpha t + \psi) \sin(jx),
$$
with $\alpha=\sqrt{j^4+\gamma^2}>0$ and $A, \psi \in \R$ depending on the initial data.
We highlight this well-known property for the linear autonomous
problem \eqref{cdbeam} as follows:
\begin{center}
{\bf if the initial data are concentrated on a single mode (such as $\sin(jx)$)}\\
{\bf then the solution remains concentrated on the same mode for all $t>0$.}
\end{center}
This property characterizes linear problems (and some classes of nonlocal problems, see \cite{bbfg} and earlier works on the wave equation \cite{CazWeiP, CazWei}) but does not hold for nonlinear beam equations such as
\neweq{nonlinear}
u_{tt}+u_{xxxx}+f(u)=0\quad \mbox{for }(x,t)\in(0,\pi)\times(0,\infty)
\endeq
where
\neweq{flip}
f\mbox{ 
is locally Lipschitz-continuous in }\R, \mbox{ non-decreasing, and such that}\ f(0)=0.
\endeq
Notice that $f(u)=\gamma^2 u$ (as in \eq{cdbeam}) satisfies \eq{flip} which is a kind of ``coercivity'' assumption. Even if $f$ is nonlinear,
one can still solve \eq{nonlinear} by separating variables and seek solutions in the form
\neweq{general}
u(x,t)=\sum_{n=1}^\infty\phi_n(t)\sin(nx),
\endeq
where the unknowns are the coefficients $\phi_n$ that solve the infinite-dimensional ODE system
\begin{equation}\label{infinite}
\ddot{\phi}_n(t)+n^4\phi_n(t)+\frac{2}{\pi}\int_0^\pi f\Big(\sum_{m=1}^\infty\phi_m(t)\sin(mx)\Big)\sin(nx)\, dx=0\qquad(n=1,...,\infty).
\end{equation}
Nonlinear wave-type systems \cite{buitelaar} and beam equations \cite{AbrHorVak} admit many resonances, since the dynamical system itself is infinite dimensional, as \eq{infinite}. As we shall see,
these resonances are difficult to detect because the initial energy of the system immediately spreads on infinitely many modes. Indeed, two fairly different kinds of energy transfer are observable inside the system. On one hand, there is a ``physiological'' energy exchange between modes, which starts instantaneously and occurs at any energy level: it is governed by the particular nonlinearity $f$ in \eqref{infinite}, so that couples of modes may possibly interact. On the other hand, resonances may occur and become
visible thanks to a (possibly delayed) sudden and violent transfer of energy from some modes to other ones, \emph{only occurring above some energy threshold}. In Section \ref{remarks}, we will specify what we mean by ``non-physiological'' transfer of energy between modes, see Definition \ref{unstable}. To make this resonance phenomenon detectable in a sufficiently clear way, we will focus on the situation where \emph{only a single mode is initially large}, thus owning most of the energy which will possibly spread onto the other modes at later time. Actually, this is what is observed in real oscillating structures: for instance, from the report on the Tacoma Narrows Bridge (TNB) collapse \cite[p.20]{ammann}, we learn that in the months prior to the collapse
\begin{equation}\label{tacoma}
\mbox{``one principal mode of oscillation prevailed and the modes of oscillation frequently changed''.}
\end{equation}
In our analysis, we will mainly focus on an autonomous system, viewing the initial data concentrated on a single mode as the effect of former external forces. Actually, we ideally start our study when the oscillation of the structure is maintained in amplitude by a somehow perfect balance between the external input of energy (e.g., from the wind) and the internal dissipation. 
\par
It is well-known that in general structures some vibrations are more destructive than others: for the TNB, the ``bad'' oscillations were the torsional ones. In fact, each real structure becomes vulnerable whenever one of its components vibrates in one of these bad ways. Imagine that a beam modeled by equation \eqref{cdbeam} is part of a complex structure with several interacting components: in this case, the bad modes are the ones having a frequency badly tuned with the frequencies of the other components. The target is then to prevent too much energy to move to these dangerous modes.
\par
In the following, we will deal both with the nonlinear
unforced beam equation \eq{nonlinear} and with its forced counterpart
\neweq{nonlinearforced}
u_{tt}+u_{xxxx}+f(u)=g(x, t)\quad \mbox{for }(x,t)\in(0,\pi)\times(0,\infty),
\endeq
considered together with the initial-boundary conditions \eqref{IBC}.
We define the total energy associated with the problem under investigation as
\neweq{energy}
E(t)=\int_0^\pi\Big(\frac{u_t^2}{2}+\frac{u_{xx}^2}{2}+F(u)\Big)\, dx, \quad \mbox{with } F(s)=\int_0^s f(\sigma) \, d\sigma,
\endeq
taking into account the contribution of the kinetic, the bending and the potential energy, respectively.
In the unforced case such a quantity is constant in time, that is $E(t)\equiv E(0)$,
but even if $E(0)$ is concentrated on a single mode, $E(t)$ can suddenly move to other modes due to the coupling effect produced by the nonlinearity. We study this phenomenon in our numerical experiments by taking initial data $u(x,0)$ and $u_t(x,0)$ almost completely concentrated on a given Fourier component $\varphi_j$ (so that $j$ will be the ``prevailing'' mode), and observe if some instability - \textbf{meant as a particular loss of energy concentration with subsequent energy transfer} - arises in a time interval $[0, T]$, see Sections \ref{stability} and \ref{remarks} for a precise explanation.
The forced situation is more delicate; in order to identify unambiguously the way the energy transfer takes place, we deal with forcing terms of the form
\neweq{g}
g(x,t)=\alpha\sin(jx)\sin(\gamma t), \mbox{ for some } \gamma \neq 0, \; \alpha > 0,
\endeq
namely only acting on the $j$-th mode, which for this reason will be considered the prevailing one. This choice is motivated by the comments reported in \cite[p. 119-120]{ammann}, where the incidence of the wind on the deck of the TNB was observed to shed (periodic-in-time) vortices concentrated on one particular mode of oscillation. In the expression \eqref{g}, $\gamma$ is the frequency of the vortex acting on the $j$-th mode, whereas $\alpha$ is the
magnitude of the excitation; the case where $\sin(\gamma t)$ is replaced by $\cos(\gamma t)$ is completely similar.
Of course, the forced equation \eqref{nonlinearforced} is more difficult since there is no preservation of energy: therefore, the energy transfer is not straightforward to characterize and, in principle, one can expect its occurrence to depend on a suitable relationship between $\gamma$ and $j$. Also the role of the magnitude $\alpha$ may not be completely clear.
\par
We observe that the isolated case aims at highlighting the {\em structural properties} of the nonlinear beam, whereas the forced case
is related to its {\em aerodynamic properties}. Hence, we start investigating the role played by the structural properties in the energy transfer mechanism and
if the aerodynamic properties may lead to the same behaviour.
\par
In some recent papers \cite{arga,bfg1,bergaz}, the energy transfer between modes in some nonlinear bridge models has been emphasized.
In the sequel, we will discuss the three following nonlinearities:
$$
f(u)=u^+, \quad f(u)=u^3, \quad f(u)=(u^+)^3,
$$
all satisfying \eq{flip} and aiming
at modeling the restoring force due to the hangers+cables-system on a simplified suspension bridge whose deck is seen as a beam.
The choice of these nonlinearities will be discussed later, see also \cite{bookgaz} for a survey of results and for a review of the history of suspension bridges.
\par
The paper is organized as follows. In Section \ref{entrans}, we state well-posedness for \eqref{IBC}-\eqref{nonlinearforced} and an approximation result which represents the theoretical justification to our numerical experiments. In Section \ref{principale}, we introduce and comment our new notion of stability for solutions of \eqref{nonlinearforced} with a prevailing mode. In Section \ref{finitodim}, we explain how to proceed with the finite-dimensional approximation in order to highlight the loss of energy concentration, both for the isolated and the forced equation. This procedure is implemented for the three
considered nonlinearities in the subsequent sections.

\section{Existence, uniqueness, and approximation of the solutions}\label{entrans}

We first give a well-posedness result for the nonlinear forced equation \eqref{nonlinearforced}, with a smooth forcing term $g$.
We introduce the Hilbertian Sobolev spaces
$H^2_*(0,\pi):=H^2(0, \pi) \cap H^1_0(0, \pi)$ and
$$H^4_*(0,\pi):=\Big\{v\in H^4(0,\pi);\, v, v'' \in H^2_*(0,\pi)\Big\}\, ,$$
and we denote by $\H(0,\pi)$
the dual space of $H^2_*(0,\pi)$.
The regularity of a function $v$ that is expanded in Fourier series as
$$
v(x)=\sum_{n=1}^\infty c_n\sin(nx)
$$
depends on the behaviour of the sequence $\{c_n\}_{_n}$. Introducing the spaces
$$
\ell^2_m:=\left\{\{c_n\}_{_n}\in\R^\infty;\, \sum_{n=1}^\infty k^{2m}c_n^2<\infty\right\}, \quad m=0, 2, 4,
$$
so that $\ell^2_0=\ell^2$ (the usual space of squared summable real sequences), it turns out that 
$$
v\in L^2(0,\pi)\ \Longleftrightarrow\ \{c_n\}_{_n} \in\ell^2_0\, , \quad v\in H^2_*(0,\pi)\ \Longleftrightarrow\ \{c_n\}_{_n}\in\ell^2_2\, , \quad v\in H^4_*(0,\pi)\ \Longleftrightarrow\ \{c_n\}_{_n}\in\ell^2_4\,.
$$
We can now state an existence, uniqueness, and regularity result for \eq{nonlinearforced}.

\begin{theorem}\label{exist}
Assume that \eqref{flip} holds and that $u_0\in H^2_*(0,\pi)$, $u_1\in L^2(0,\pi)$, $g \in C^1(\R_+ \times [0, \pi])$. Then there exists a unique
$$u\in C^0\big(\R_+;H^2_*(0,\pi)\big),\quad\mbox{with}\quad u_t\in C^0\big(\R_+;L^2(0,\pi)\big)\quad\mbox{and}\quad
u_{tt}\in C^0\big(\R_+;\H(0,\pi)\big),$$
such that:\par
$(i)$ $u$ satisfies the initial conditions $u(x,0)=u_0(x)$ and $u_t(x,0)=u_1(x)$;\par
$(ii)$ $u$ satisfies \eqref{nonlinearforced} in the following weak sense:
$$
\langle u_{tt}(t),\psi\rangle+\int_0^\pi u_{xx}(t)\psi_{xx}+\int_0^\pi f\big(u(t)\big)\psi = \int_0^\pi g(t) \psi,
$$
for all $\psi\in H^2_*(0,\pi)$ and a.e.\ $t>0$.\par
If, moreover, $u_0\in H^4_*(0,\pi)$ and $u_1\in H^2_*(0,\pi)$, then the solution $u$ of \eqref{nonlinearforced} satisfies
$$u\in C^0\big(\R_+;H^4_*(0,\pi)\big),\quad\mbox{with}\quad u_t\in C^0\big(\R_+;H^2_*(0,\pi)\big)\quad\mbox{and}\quad u_{tt}\in C^0\big(\R_+;L^2(0,\pi)\big),$$
and is therefore a strong solution of \eqref{nonlinearforced}.
\end{theorem}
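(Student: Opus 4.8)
The plan is to follow the standard Faedo--Galerkin scheme adapted to the fourth-order operator $\partial_{xxxx}$ under Navier boundary conditions, for which the eigenfunctions $\{\sin(nx)\}_{n\ge1}$ form an orthogonal basis of $L^2(0,\pi)$, $H^2_*(0,\pi)$ and $H^4_*(0,\pi)$ simultaneously, with eigenvalues $n^4$. First I would fix $N\in\N$, let $V_N=\mathrm{span}\{\sin x,\dots,\sin(Nx)\}$, and seek the Galerkin approximation $u^N(x,t)=\sum_{n=1}^N\phi_n^N(t)\sin(nx)$ solving the projected system, i.e.\ \eqref{infinite} truncated at $N$ with the forcing projection $\tfrac{2}{\pi}\int_0^\pi g(x,t)\sin(nx)\,dx$ on the right-hand side, together with the projected initial data $u^N(\cdot,0)=P_Nu_0$, $u^N_t(\cdot,0)=P_Nu_1$. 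Since $f$ is locally Lipschitz and $g\in C^1$, this is a locally Lipschitz ODE system in $\R^{2N}$, so Cauchy--Lipschitz gives a unique maximal $C^2$ solution on some $[0,T_N)$.

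Next I would derive the a priori estimate by testing the $n$-th equation with $\dot\phi_n^N$ and summing, which produces the energy identity $\frac{d}{dt}E_N(t)=\int_0^\pi g(t)u^N_t(t)$, where $E_N$ is the natural truncated energy \eqref{energy}. Here the monotonicity $f$ non-decreasing with $f(0)=0$ gives $F(s)\ge0$, so $E_N\ge\tfrac12\|u^N_t\|_{L^2}^2+\tfrac12\|u^N_{xx}\|_{L^2}^2$; combined with Young's inequality on the forcing term and Gronwall's lemma, this bounds $u^N$ in $L^\infty_{loc}(\R_+;H^2_*)$ and $u^N_t$ in $L^\infty_{loc}(\R_+;L^2)$ uniformly in $N$, hence $T_N=+\infty$. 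From the equation one then reads off a uniform bound for $u^N_{tt}$ in $L^2_{loc}(\R_+;\H(0,\pi))$, using that $v\mapsto\int_0^\pi v_{xx}\psi_{xx}$ and $\psi\mapsto\int_0^\pi f(v)\psi$ are bounded on $H^2_*$ (the latter via the Sobolev embedding $H^2_*\hookrightarrow C^0$ and the local Lipschitz bound on $f$).

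Then I would extract (up to a subsequence) weak-* limits $u$, and pass to the limit in the weak formulation. The linear term passes trivially; the only genuine point is the nonlinear term $\int_0^\pi f(u^N)\psi$. For this I would invoke Aubin--Lions: the uniform bounds on $u^N$ in $L^\infty(0,T;H^2_*)$ and $u^N_t$ in $L^\infty(0,T;L^2)$, together with the compact embedding $H^2_*\hookrightarrow\hookrightarrow C^0([0,\pi])$, give strong convergence $u^N\to u$ in $C^0([0,T]\times[0,\pi])$ along a subsequence, whence $f(u^N)\to f(u)$ uniformly by continuity of $f$, and the nonlinear term converges. This yields a weak solution with the stated regularity; continuity in time (rather than just $L^\infty$) and attainment of the initial data follow from a standard argument (e.g.\ $u\in C_w$ plus the energy identity upgrades weak to strong continuity). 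Uniqueness I would prove by taking two solutions $u,\tilde u$ with the same data, testing the difference equation with $(u-\tilde u)_t$ — legitimate after a mollification/regularization in time since $(u-\tilde u)_t$ is only in $L^2$ — and using the local Lipschitz bound on $f$ (the $C^0$ bound on $u,\tilde u$ keeps us on a compact set) together with Gronwall to force $u\equiv\tilde u$; here the monotonicity of $f$ can alternatively be exploited to handle the nonlinear term with the right sign.

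For the regularity statement with $u_0\in H^4_*$, $u_1\in H^2_*$, the idea is to differentiate the Galerkin system in $t$ and test with $\ddot\phi_n^N$, or equivalently to run the energy estimate one level up: set $v^N=u^N_t$, which solves a linear beam equation with source $g_t-f'(u^N)u^N_t$ (interpreting $f'$ in the a.e.\ sense, justified since $f$ is locally Lipschitz hence differentiable a.e.\ with bounded derivative on compacts, and $u^N$ is already known to be bounded). Testing with $v^N_t=u^N_{tt}$ and controlling $\int_0^\pi f'(u^N)u^N_t u^N_{tt}$ by the $L^\infty$ bound on $u^N$ and Young's inequality gives a uniform bound on $u^N_t$ in $L^\infty_{loc}(H^2_*)$ and $u^N_{tt}$ in $L^\infty_{loc}(L^2)$; the initial value of $u^N_{tt}$ is controlled because $u^N_{tt}(0)=-\,(u_0)_{xxxx}-P_Nf(u_0)+P_Ng(0)$ is bounded in $L^2$ thanks to $u_0\in H^4_*$. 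Passing to the limit as before then places $u$ in the claimed higher-regularity class, and reading $u_{xxxx}=g-u_{tt}-f(u)\in C^0(L^2)$ off the equation shows $u\in C^0(H^4_*)$, so the weak solution is strong.

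The main obstacle I anticipate is not any single estimate but the careful justification of the test-function manipulations at the low-regularity level — in particular the uniqueness proof, where $(u-\tilde u)_t\in L^2$ is not an admissible test function for the weak formulation directly; the clean fix is a Steklov-average/time-regularization argument (as in Lions--Magenes or Temam), or else exploiting monotonicity of $f$ to avoid differentiating it. Everything else is a routine, if somewhat lengthy, instance of the Galerkin method for semilinear hyperbolic equations.
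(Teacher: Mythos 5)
Your proposal is correct and takes essentially the same route as the paper: the authors' entire proof is a one-line citation of the Faedo--Galerkin procedure in Temam \cite[Theorem 4.1, p.~210]{Tem} ``with minor changes due to the fact that $f$ only satisfies \eqref{flip}'', and your sketch simply carries out that procedure in detail. The adaptation you make --- using $F\ge 0$ in the energy identity and the embedding $H^2_*(0,\pi)\hookrightarrow C^0([0,\pi])$ to confine the solution to a compact set on which the merely locally Lipschitz $f$ has a finite Lipschitz constant --- is exactly the ``minor change'' the authors allude to.
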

\begin{proof}
The proof can be obtained with a Galerkin procedure similar to the one in \cite[Theorem 4.1, p. 210]{Tem}, with minor changes due to the fact that $f$ only satisfies \eqref{flip}.
\end{proof}

Theorem \ref{exist} guarantees existence and uniqueness of the strong solution $u(x, t)$ of \eqref{nonlinearforced} but, to analyze the energy transfer between modes, we have to deal with \emph{approximate solutions} as follows.
As in \cite{Krol}, we use two different approximations, both depending on an integer parameter $N$ which measures
the degrees of freedom, namely the number of nontrivial Fourier coefficients.
\par
The first approximation requires the knowledge of the solution $u(x, t)$ of \eq{nonlinearforced} and consists in
projecting it onto the finite dimensional subspace of $H^2_*(0,\pi)$ spanned by $\{\sin(x),...,\sin(Nx)\}$. If we denote by $P_N$ the corresponding
projector onto this space and by $Q_N = I - P_N$ its $H^2_*$-orthogonal complement, we have
\begin{equation}\label{proiettori}
P_Nu(x,t)=\sum_{n=1}^N\phi_n(t)\sin(nx), \quad Q_N u(x, t)=\sum_{n=N+1}^{+\infty} \phi_n(t) \sin(nx),
\end{equation}
where $u(x, t)$ is the solution of \eq{nonlinearforced} and the $\phi_n$'s are as in \eq{general}. In the unforced case $g(x, t) \equiv 0$, one has energy conservation and therefore, since $F(u) \geq 0$ by \eqref{flip},
$$
\frac{(N+1)^4}{2} \int_0^\pi (Q_N u)^2 \leq \frac{1}{2} \int_0^\pi (Q_N u_{xx})^2 \leq \frac{1}{2} \int_0^\pi u_{xx}^2
\le\int_0^\pi\Big(\frac{u_t^2}{2}+\frac{u_{xx}^2}{2}+F(u)\Big)=E(t)=E(0),
$$
which readily implies that the $N$-th remainder $R_N$ of the series $\sum\varphi_n^2(t)$ is subject to the bound
\begin{equation}\label{resto}
R_N(t):=\frac{2}{\pi}\int_0^\pi|Q_Nu(x,t)|^2\, dx=\sum_{n>N}\varphi_n^2(t)\leq\frac{4 E(0)}{\pi(N+1)^4}, \quad \forall t \in \R_+.
\end{equation}
This shows that $P_Nu\to u$ in $L^\infty(\R_+;L^2(0,\pi))$ as $N \to \infty$.
\par
The second approximation is given by the
Galerkin procedure: we seek functions of the form
\neweq{tronca}
u^N(x, t)= \sum_{n=1}^N\phi_n^N (t)\sin(nx)
\endeq
that solve the following finite-dimensional ODE system, obtained by inserting $u^N(x, t)$ into \eqref{nonlinearforced} and then by projecting the resulting equation onto the space spanned by first $N$ modes:
\begin{equation}\label{finite}
\ddot{\phi}_n^N(t)+n^4\phi_n^N(t)+\frac{2}{\pi} \int_0^\pi f\Big(\sum_{m=1}^N\phi_m^N(t)\sin(mx)\Big)\sin(nx)\, dx=\frac{2}{\pi}\int_0^\pi g(x, t) \sin(nx) \, dx,
\end{equation}
with $n=1,...,N$. The following statement holds. 
\begin{theorem}\label{qualitativo}
Let $T > 0$ be fixed and let
$$
u(x, t) = \sum_{n=1}^{+\infty} \varphi_n(t) \sin (nx)
$$
be the strong solution of \eqref{IBC}-\eqref{nonlinearforced}. Let $\{\varphi_n^N\}_{_n}$ be the solution of the finite-dimensional system \eqref{finite} such that $\varphi_n^N(0)=\varphi_n(0)$, $\dot{\varphi}_n^N(0)=\dot{\varphi}_n(0)$. Then, for every $\epsilon > 0$ there exists $N^\epsilon$ such that, for every $N \geq N^\epsilon$, it holds
$$
\Vert \varphi_n - \varphi_n^{N} \Vert_{L^\infty(0, T)} \leq \epsilon,
$$
for every $n \in \{1, \ldots, N\}$.
\end{theorem}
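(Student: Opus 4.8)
The plan is to run an energy estimate on the difference between the exact Fourier coefficients $\varphi_n$ and the Galerkin coefficients $\varphi_n^N$, exploiting that $f$ is Lipschitz on the common, uniformly bounded, range of the exact and of the approximate solutions.

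First I would establish uniform a priori bounds. By Theorem \ref{exist} the strong solution satisfies $u\in C^0(\R_+;H^2_*(0,\pi))$, and since $H^2_*(0,\pi)\hookrightarrow C^0([0,\pi])$ in dimension one, $\|u(t)\|_{L^\infty(0,\pi)}\le M_0$ for $t\in[0,T]$. For the Galerkin solution, multiplying \eqref{finite} by $\dot\varphi_n^N$ and summing over $n=1,\dots,N$ yields the discrete energy law $\frac{d}{dt}E^N(t)=\frac{2}{\pi}\int_0^\pi g(t)\,u^N_t(t)\,dx$ with $E^N(t)=\frac12\sum_{n=1}^N[(\dot\varphi_n^N)^2+n^4(\varphi_n^N)^2]+\frac2\pi\int_0^\pi F(u^N(t))\,dx$; since $F\ge0$ (as $f$ is non-decreasing with $f(0)=0$), conservation in the unforced case (with $E^N(0)$ bounded uniformly in $N$ by the energy of the initial data) and a Cauchy--Schwarz plus Gronwall argument in the forced case bound $E^N(t)$, hence $\|u^N_{xx}(t)\|_{L^2}$ and therefore $\|u^N(t)\|_{L^\infty(0,\pi)}$, by a constant $M_1$ independent of $N$ and of $t\in[0,T]$. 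Set $M=\max\{M_0,M_1\}$ and let $L$ be the Lipschitz constant of $f$ on $[-M,M]$. I would also record, from \eqref{resto} in the unforced case and its analogue in the forced case (replacing $E(0)$ by $\sup_{[0,T]}E(t)<\infty$), that $\delta_N:=\|Q_Nu\|_{L^\infty(0,T;L^2(0,\pi))}\to0$ as $N\to\infty$.

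Next, for $n=1,\dots,N$ set $\theta_n:=\varphi_n-\varphi_n^N$, $w^N:=P_Nu$ and $v^N:=w^N-u^N=\sum_{n=1}^N\theta_n\sin(nx)$. Subtracting \eqref{finite} from the equations satisfied by the first $N$ coefficients of $u$ (the weak formulation of Theorem \ref{exist}$(ii)$ tested against $\sin(nx)$), the forcing terms cancel and
$$\ddot\theta_n+n^4\theta_n+\frac2\pi\int_0^\pi\big[f(u)-f(u^N)\big]\sin(nx)\,dx=0,\qquad n=1,\dots,N.$$
Multiplying by $\dot\theta_n$, summing over $n$, and using orthogonality of the functions $\sin(nx)$ to rewrite $\sum_n\dot\theta_n\int_0^\pi h\sin(nx)\,dx=\int_0^\pi h\,v^N_t\,dx$, one obtains $\frac{d}{dt}\mathcal E_N=-\frac2\pi\int_0^\pi[f(u)-f(u^N)]\,v^N_t\,dx$ for $\mathcal E_N(t):=\frac12\sum_{n=1}^N[\dot\theta_n(t)^2+n^4\theta_n(t)^2]$. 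Splitting $f(u)-f(u^N)=[f(u)-f(w^N)]+[f(w^N)-f(u^N)]$, the local Lipschitz bound gives $|f(w^N)-f(u^N)|\le L|v^N|$ and $|f(u)-f(w^N)|\le L|Q_Nu|$; combining this with the elementary inequalities $\|v^N\|_{L^2}^2+\|v^N_t\|_{L^2}^2\le\pi\,\mathcal E_N$ and $\|Q_Nu\|_{L^2}\le\delta_N$, Young's inequality produces $\frac{d}{dt}\mathcal E_N(t)\le C_1\,\mathcal E_N(t)+C_2\,\delta_N^2$ with $C_1,C_2$ depending only on $L$.

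Since the initial data of the two problems coincide on the first $N$ modes, $\mathcal E_N(0)=0$, so Gronwall yields $\sup_{[0,T]}\mathcal E_N(t)\le C_T\,\delta_N^2$ with $C_T$ independent of $N$. Finally, for every $n\in\{1,\dots,N\}$ and $t\in[0,T]$, $\theta_n(t)^2\le n^4\theta_n(t)^2\le2\mathcal E_N(t)\le2C_T\delta_N^2$, whence $\|\varphi_n-\varphi_n^N\|_{L^\infty(0,T)}\le\sqrt{2C_T}\,\delta_N$ for all such $n$ at once; as $\delta_N\to0$, choosing $N^\epsilon$ so that $\sqrt{2C_T}\,\delta_N\le\epsilon$ for $N\ge N^\epsilon$ concludes. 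I expect the only genuinely delicate point to be the uniform-in-$N$ a priori bound on the Galerkin energies $E^N$ over $[0,T]$: it is precisely this bound that furnishes a single Lipschitz constant $L$ valid for all approximations simultaneously, which is what makes the merely local Lipschitz continuity of $f$ sufficient; everything else is a routine Gronwall argument on $\mathcal E_N$.
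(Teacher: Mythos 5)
Your proposal is correct and follows essentially the same route as the paper's quantitative proof (Theorem \ref{approximation}): an energy estimate on $v=P_Nu-u^N$, the splitting $u-u^N=Q_Nu+v$ with $\Vert Q_Nu\Vert_{L^2}$ small by the generalized Poincar\'e inequality, the local Lipschitz bound on the uniformly bounded range of the solutions, and Gronwall with $v(0)=v_t(0)=0$. The only cosmetic difference is that the paper keeps the $Q_Nu$ term as a $\sqrt{Y}$ contribution in a Bernoulli-type inequality (and uses the monotonicity of $f$ to build $\tfrac{L(M)}{2}\Vert v\Vert_{L^2}^2$ into the energy), which yields sharper explicit constants, while your explicit uniform-in-$N$ bound on $\Vert u^N\Vert_{L^\infty}$ makes precise a point the paper leaves implicit.
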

Actually, this qualitative statement is somehow contained in the theory of Galerkin approximations: the sequence $\{u^N\}_{_N}$ converges to the solution $u$ of \eqref{IBC}-\eqref{nonlinearforced} provided by Theorem \ref{exist}. 
However, notice that here we focus on \emph{each} component $\varphi_n$, evaluating its difference with the corresponding finite-dimensional approximation $\varphi_n^N$. This will be an important point in our definition of stability.
It is also clear that this general statement cannot be optimal, nor it is directly applicable to concrete situations where finer estimates of the error are needed.
In fact, in Section \ref{dimostrazione} we will prove Theorem \ref{qualitativo} by providing a quantitative estimate of $\Vert \varphi_n - \varphi_n^N \Vert_{L^\infty(0, T)}$ (Theorem \ref{approximation}), which, as a byproduct, will equip us with an effective way of computing $N^\epsilon$; this will be the crucial point in order to deduce some general properties of the PDE
\eqref{nonlinearforced} through the numerical study of \eqref{finite}.

\section{Fundamental definitions and comments}\label{principale}

\subsection{Stability of solutions with a prevailing mode}\label{stability}

In this section, we characterize the energy transfer and instability phenomena for both equations \eqref{nonlinear} and \eqref{nonlinearforced}. We take initial data of the form
\begin{equation}\label{datiin}
u_0(x) = \sum_{n=1}^{+\infty} a_n \sin (nx), \quad u_1(x)=\sum_{n=1}^{+\infty} b_n \sin(nx),
\end{equation}
with $\{a_n\}_{_n} \in \ell^2_4$ and $\{b_n\}_{_n} \in \ell^2_2$,
and possibly a forcing term $g$ as in \eqref{g}. By Theorem \ref{exist}, these data uniquely determine the solution; therefore, from now on we will use them in order to characterize some particular solutions.   
\par
We first give two definitions regarding equation \eqref{nonlinearforced}. We are interested only in solutions which have a prevailing mode, i.e., characterized by data having the tendency to concentrate most of the dynamics on a sole mode, according to the following definition.
\begin{definition}\label{prevalente}
Let $0 < \eta < 1$. We say that a strong solution of \eqref{nonlinearforced} has the $j$-th mode $\eta$-\emph{prevailing} if $j$ is the only integer for which:
\begin{itemize}
\item[-] if $g(x, t) \equiv 0$, then
\begin{equation}\label{smallbig}
\sum_{n\neq j}(a_n^2+b_n^2)\leq \eta^4 (a_j^2+b_j^2)\, , \quad (a_n, b_n \mbox{ as in } \eqref{datiin});
\end{equation}
\item[-] if $g(x, t) \not\equiv 0$, then
\begin{equation}\label{forzato}
a_n^2+b_n^2 \leq \eta^4 (a_j^2+b_j^2)\, \mbox{ for every } n \neq j \quad \textrm{ and } \quad
 g(x, t)= \alpha \sin(jx) \sin(\gamma t),
\end{equation}
for some $\gamma \neq 0$ and $\alpha > 0$ (the case $\alpha<0$ being similar). 
\end{itemize}
For this solution, all the other modes $k \neq j$ are called \emph{residual}.
\end{definition}
Condition \eqref{smallbig} states that both the potential and the kinetic energy are initially almost completely concentrated on the $j$-th mode, while \eqref{forzato} also states that the $j$-th mode is the only forced one (slightly relaxing assumption \eqref{smallbig}).

Obviously, not all the solutions of \eqref{nonlinearforced} have an $\eta$-prevailing mode but, motivated by the observation \eqref{tacoma}, our stability analysis will be focused on this situation. 
\par
We now formalize our notion of \emph{instability} before a certain time instant $T$.
\begin{definition}\label{unstable}
Let $T_W > 0$. We say that a solution of \eqref{nonlinearforced}, in the form \eqref{general} and having $\eta$-prevailing mode $j$, is \emph{unstable} before time $T > 2T_W$ if there exist a residual mode $k$ and a time instant $\tau$ with $2T_W < \tau < T$  such that
\neweq{grande}
(i)  \ \ \Vert \varphi_k \Vert_{L^\infty(0, \tau)} \geq \eta \Vert \varphi_j \Vert_{L^\infty(0, \tau)}
\qquad \mbox{ and } \qquad
(ii) \ \ \frac{\Vert \varphi_k \Vert_{L^\infty(0, \tau)}}{\Vert \varphi_k \Vert_{L^\infty(0, \tau/2)}} \geq \frac{\alpha+1}{\eta},
\endeq
where $\eta$ is as in Definition \ref{prevalente} and $\alpha > 0$ is as in \eqref{forzato} ($\alpha = 0$ if $g \equiv 0$).
We say that $u$ is \emph{stable} until time $T$ if it is not unstable.
\end{definition}

Definition \ref{unstable} is the core of the paper and, for this reason, we now comment it in detail.

\subsection{Physiological transfer, Wagner effect and instability}\label{remarks}

Definition \ref{unstable} detects an energy transfer by comparing the behaviour of a residual mode $k$ both with respect to the $\eta$-prevailing one $j$ and with respect to itself in earlier time. In fact, condition $(i)$ in \eqref{grande} roughly says that the potential energy of the $k$-th mode has subverted the initial inequality \eqref{smallbig} by an order of magnitude along the time interval $[0, \tau]$, while condition $(ii)$ basically expresses the fact that the $k$-th mode has a ``Floquet multiplier'' equal to $(\alpha+1)/\eta$. In these cases, the system is likely to display destructive oscillations in the future when the residual mode $k$ is badly tuned with the structure (see \eqref{tacoma} and the subsequent comment). Here, by destructive we mean exponentially-like growing: of course, for $g \equiv 0$ the energy is preserved in the system and such oscillations are prevented to blow up, but if they become too large they may lead to failures of the structure, as for the TNB. 
As anticipated in the Introduction, we will mainly focus our attention on the autonomous case, viewing the initial data as the consequence of earlier external forces concentrated on a particular mode. Indeed, while in absence of wind or external loads the deck of a bridge remains still, the presence of the wind generates a forcing lift making the deck oscillating on a prevailing mode; this increases the internal energy of the structure, generating wide
oscillations. Our analysis starts when the oscillation is maintained in amplitude by an equilibrium between the input of energy from the wind and structural dissipation.
\par
One of the reasons why we choose to use the $L^\infty$-norm of $\varphi_k$ in \eqref{grande} instead of the total energy of the $k$-th mode, given by computing \eqref{energy} on $u(x, t)= \varphi_k(t) \sin (kx)$, is due to the fact that the Fourier components which remain too small (in absolute value) usually play a secondary role in applications. Among them, we refer in particular to the small oscillations of high modes, which may have large energy even for small $L^\infty$-norms.
Condition \eqref{grande}-$(i)$ indeed requires not only the residual mode to grow with respect to itself, but also to reach a relevant amplitude compared to that of the $\eta$-prevailing mode.
\par
Let us also observe that it may happen that the (potential$+$kinetic) energy splits and distributes itself from the $\eta$-prevailing mode onto \emph{more} residual modes which, in principle, could grow substantially, but without fulfilling Definition \ref{unstable}. For instance, two residual modes may satisfy the inequalities in \eqref{grande} but with smaller factors: in this case, it would be natural to consider the $\eta$-prevailing mode as unstable, although this is not detected by Definition \ref{unstable}. To overcome this ambiguity, one could replace \eqref{grande} with
$$
 \ \ \Big\Vert \sum_{n \neq j} (\varphi_n^2 + \dot{\varphi}_n^2) \Big\Vert_{L^\infty(0, \tau)} \geq \eta^2 \Vert \varphi_j^2 + \dot{\varphi}_j^2 \Vert_{L^\infty(0, \tau)}
\; \; \mbox{ and } \;
\ \ \frac{\left \Vert \sum_{n \neq j} (\varphi_n^2 + \dot{\varphi}_n^2) \right\Vert_{L^\infty(0, \tau)}}{\left \Vert \sum_{n \neq j} (\varphi_n^2 + \dot{\varphi}_n^2) \right\Vert_{L^\infty(0, \tau/2)}} \geq \left(\frac{\alpha+1}{\eta}\right)^2;
$$
however, as we have seen in our experiments, if more residual modes ``almost'' satisfy \eqref{grande}, then by slightly increasing the initial amplitude of the $\eta$-prevailing mode at least one of them fully satisfies \eqref{grande}.
The only difference in considering also the kinetic part of the energy is the possibility of obtaining slightly different thresholds of instability, but this does not appear significant in the applications. Since checking a condition which involves the $L^\infty$-norm of the Fourier components is in general much simpler, we thus prefer to follow Definition \ref{unstable}. For further comments, we refer the reader to the discussions in Section \ref{casecub}; let us also mention that all these situations may be observed more neatly in a finite-dimensional Hamiltonian system, see \cite{bgz}.
\par
Also the bound $T > 2T_W$ deserves a comment. The interval $[0, T_W]$ embodies a transient phase corresponding to the so-called \emph{Wagner effect} \cite{Wag}, consisting in a time delay in the appearance of the response to a sudden change of the action of an external input in a forced system.
This is crucial to distinguish between \emph{physiological} losses of energy concentration and losses due to \emph{instability}: the former occur at any energy level and in the interval of time $[0, T_W]$, whereas the latter may occur suddenly but only after the Wagner time $T_W$. In other words: in absence of instability, $T_W$ is the needed time for the solution to settle to a kind of ``equilibrium'' in which the amplitude of each mode does not exhibit further significant variations. For these reasons, our kind of instability may be detected only if the behaviour of the solution is studied until a time $T$ at least equal to twice the Wagner time (namely, $T > 2T_W$). Overall, Definition \ref{unstable} \emph{excludes the Wagner effect and spots a real loss of energy concentration from an unstable $\eta$-prevailing mode towards a residual one}.
\par
Finally, from the point of view of the applications, the interest for instability phenomena is usually focused on a \emph{bounded} time interval. An engineer is interested in testing whether a structure is able to withstand oscillations for a certain period, for example one day. Thus, local-in-time stability responses are satisfactory from an applied point of view, even if the instability may theoretically appear after an arbitrarily large time.  

\section{Finite-dimensional approximation} \label{finitodim}

We show here how to use the previous definitions in order to detect instability for equation \eqref{nonlinearforced}, reducing its study to the analysis of a finite-dimensional system of ODEs. Also based on our numerical simulations, a reasonable choice for the parameters appearing in Definitions \ref{prevalente} and \ref{unstable} is given by
\begin{equation}\label{lascelta}
\eta=\left\{
\begin{array}{ll}
0.1 & \textrm{ in  \eqref{smallbig}} \\
0.999 & \textrm{ in  \eqref{forzato}}, 
\end{array}
\right.
\qquad 
T_W= 1; 
\end{equation}
henceforth, we will fix these values in all our numerical simulations. In fact, 
we also tested smaller values of $\eta$, such as $0.01$, and different values of $T_W$, but the picture remained qualitatively the same. Since $\eta$ is fixed as in \eqref{lascelta}, from now on we will simply speak about prevailing modes. 
\par
We observe that the PDE \eqref{nonlinearforced}, where we assume $g(x, t)$ of the form \eqref{g}, is equivalent to the infinite-dimensional system
\begin{equation}\label{infiniteforced}
\ddot{\phi}_n(t)+n^4\phi_n(t)+\frac{2}{\pi}\int_0^\pi f\Big(\sum_{m=1}^\infty\phi_m(t)\sin(mx)\Big)\sin(nx)\, dx= \delta_{j n}\, \alpha \sin (\gamma t)\qquad(n=1,...,\infty),
\end{equation}
while \eqref{finite} reads
\begin{equation}\label{finiteg}
\ddot{\phi}_n^N(t)+n^4\phi_n^N(t)+\frac{2}{\pi} \int_0^\pi f\Big(\sum_{m=1}^N\phi_m^N(t)\sin(mx)\Big)\sin(nx)\, dx= \delta_{j n} \, \alpha \sin(\gamma t) \qquad(n=1,...,N),
\end{equation}
where $\delta_{j n}$ is the usual Kronecker delta symbol.
Our next objective is to discuss system \eqref{infiniteforced} through the study of its finite-dimensional approximation \eqref{finiteg}. We observe that the extension of the notion of prevailing mode for system \eqref{finiteg} is straightforward; in particular, the Galerkin approximation of a solution with prevailing mode $j$ has the same prevailing mode when $N \geq j$. 
With the next definition, we formalize our notion of stability for the finite-dimensional system \eqref{finiteg}, by choosing suitable factors that take into account the choice made in \eqref{lascelta}.

\begin{definition}\label{finitestability}
We say that a solution $\{\varphi_n^N\}_{_n}$ of system \eqref{finiteg} with prevailing mode $j$ is:
\begin{itemize}
\item[-] \emph{unstable before time $T > 2$} if there exist a time instant $\tau$ with $2 < \tau < T$ and an integer $k \neq j$, $k \in \{1, \ldots, N\}$ such that
\neweq{finitegrande}
\Vert \varphi_k^N \Vert_{L^\infty(0, \tau)} \geq 0.11 \Vert \varphi_j^N \Vert_{L^\infty(0, \tau)}
\qquad \mbox{ and } \qquad
\frac{\Vert \varphi_k^N \Vert_{L^\infty(0, \tau)}}{\Vert \varphi_k^N \Vert_{L^\infty(0, \tau/2)}} \geq 11(\alpha+1);
\endeq
\item[-] \emph{stable until time $T > 2$} if, for every $k \neq j$, $k \in \{1, \ldots, N\}$ and every $\tau$ with $2 < \tau < T$,
\neweq{finitestable}
\mbox{either} \qquad \Vert \varphi_k^N \Vert_{L^\infty(0, \tau)} \leq 0.09 \Vert \varphi_j^N \Vert_{L^\infty(0, \tau)} \qquad \mbox{or} \qquad
\frac{\Vert \varphi_k^N \Vert_{L^\infty(0, \tau)}}{\Vert \varphi_k^N \Vert_{L^\infty(0, \tau/2)}} \leq 9(\alpha+1).
\endeq
\end{itemize}
\end{definition}
We underline that Definition \ref{finitestability} is not exhaustive since \eqref{finitegrande} and \eqref{finitestable} are not complementary: there are some cases where the definition does not allow to establish whether the solution is stable or unstable. \par
We now give a finite-dimensional sufficient criterion to check stability for system \eqref{infiniteforced}.
\begin{theorem}\label{stable}
There exists a number $\overline{N}$ (depending only on $E(0)$, $g$ and $T$) such that, for every $N \geq \overline{N}$, a  solution
$$
u(x, t) = \sum_{n=1}^{+\infty} \varphi_n(t) \sin (nx)
$$
of \eqref{nonlinearforced} with prevailing mode $j \in \{1, \ldots, N\}$ is unstable before (resp., stable until) time $T > 2$ provided that the corresponding solution $\{\varphi_n^N\}_{_n}$ of \eqref{finiteg}, with $\varphi_n^N(0)=\varphi_n(0)$, $\dot{\varphi}_n^N(0)=\dot{\varphi}_n(0)$ is unstable (resp., stable), according to Definition~\ref{finitestability}.
\end{theorem}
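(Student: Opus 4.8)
The plan is to derive Theorem~\ref{stable} as a quantitative refinement of the Galerkin convergence already recorded in Theorem~\ref{qualitativo}, exploiting the explicit energy bound \eqref{resto} to control the tail and the fact that the instability/stability conditions in Definitions~\ref{unstable} and \ref{finitestability} are stated in terms of a finite number of $L^\infty$-norms of Fourier components. First I would set, for the fixed data and the fixed time $T$, the ambient energy level $E(0)$ and note that along the solution $u$ and along every Galerkin truncation $u^N$ (with $N\ge j$) one has uniform-in-$N$ a priori bounds on $\|u^N_t\|_{L^2}$, $\|u^N_{xx}\|_{L^2}$ and $\int_0^\pi F(u^N)$ on $[0,T]$; in the forced case these bounds depend also on $\alpha$, $\gamma$, $T$ through a Gronwall argument on the forced energy identity. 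This pins down, in particular, a uniform bound $M=M(E(0),g,T)$ for $\sum_n \varphi_n^N(t)^2+\sum_n n^4\varphi_n^N(t)^2$, and the analogue \eqref{resto} for $u$, namely $\sum_{n>N}\varphi_n(t)^2\le 4E(0)/(\pi(N+1)^4)$.

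Next I would invoke Theorem~\ref{qualitativo} (or rather its quantitative version, Theorem~\ref{approximation}, promised in Section~\ref{dimostrazione}) to pick $N$ so large that $\|\varphi_n-\varphi_n^N\|_{L^\infty(0,T)}\le\epsilon$ for every $n\in\{1,\dots,N\}$, and then translate the gap between the constants in Definition~\ref{unstable} and Definition~\ref{finitestability} into an admissible size of $\epsilon$. Concretely, the finite-dimensional thresholds $0.11$, $0.09$, $11(\alpha+1)$, $9(\alpha+1)$ straddle the infinite-dimensional values $\eta=0.1$ and $(\alpha+1)/\eta=10(\alpha+1)$ (recall $\eta=0.1$ is the relevant choice), leaving a fixed margin. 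Since $\varphi_j$ is the prevailing mode, $\|\varphi_j\|_{L^\infty(0,\tau)}$ is bounded below by a positive constant $c_0=c_0(E(0),g)$ uniformly in the relevant $\tau\in(2,T)$ — essentially because $a_j^2+b_j^2$ carries almost all the initial energy and the equation cannot instantaneously drain the $j$-th mode — and it is bounded above by $\sqrt{M}$. Likewise, if the finite instability condition \eqref{finitegrande} holds for some residual $k$, then $\|\varphi_k^N\|_{L^\infty(0,\tau)}\ge 0.11\,c_0>0$, so $\|\varphi_k^N\|_{L^\infty(0,\tau/2)}$ is bounded below as well through the second inequality in \eqref{finitegrande}. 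Hence all the quantities entering \eqref{grande} and \eqref{finitegrande} are pinched between fixed positive bounds, and replacing each $\varphi_n^N$ by $\varphi_n$ perturbs each ratio by $O(\epsilon)$ with a constant depending only on $c_0$ and $M$. Choosing $\epsilon$ smaller than a fixed fraction of the margin (e.g.\ $\epsilon\le \epsilon^*(E(0),g,T)$) and then taking $\overline N=N^{\epsilon^*}$ from Theorem~\ref{qualitativo}, one concludes: \eqref{finitegrande} for $u^N$ forces \eqref{grande} for $u$ (instability transfers up), and \eqref{finitestable} for $u^N$ forces the negation of \eqref{grande} for $u$ for every residual $k\le N$, while the residual modes $k>N$ are controlled directly by the tail bound \eqref{resto}, which makes $\|\varphi_k\|_{L^\infty(0,\tau)}$ negligible compared with $\eta\|\varphi_j\|_{L^\infty(0,\tau)}\ge \eta c_0$ once $N$ is large; thus stability transfers up as well.

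The step I expect to be the genuine obstacle is the uniform lower bound $\|\varphi_j\|_{L^\infty(0,\tau)}\ge c_0>0$ on the prevailing mode, valid for all admissible $\tau$ and, crucially, independent of $N$: without it the ratios in \eqref{grande} and \eqref{finitegrande} are not uniformly continuous under the $O(\epsilon)$ perturbation, and the gap argument collapses. I would handle this by a short-time argument at the level of the ODE for $\varphi_j$: from \eqref{finiteg}, $\ddot\varphi_j^N+j^4\varphi_j^N$ equals a bounded forcing (the nonlinear term is bounded by $M$-dependent constants via the energy bound and local Lipschitz continuity on the relevant ball, plus $\alpha$), so $\varphi_j^N$ cannot leave a neighbourhood of its Duhamel expansion driven by the near-maximal initial amplitude before a fixed time $t_0=t_0(E(0),g)$; since $\tau>2T_W=2>t_0$ for a suitable normalisation, this yields the desired $c_0$. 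A secondary technical point is that the nonlinearity $f$, under \eqref{flip} alone, is only locally Lipschitz, so all Lipschitz constants must be taken on the fixed ball of radius $\sqrt{M}$ in the relevant norms — this is harmless once the uniform energy bound is in place. Assembling these pieces and unwinding the quantitative dependence of $N^{\epsilon^*}$ on $E(0)$, $g$ and $T$ from Theorem~\ref{approximation} gives the claimed $\overline N$.
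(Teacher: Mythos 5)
Your proposal takes essentially the same route as the paper's own proof, which is in fact only a four-line sketch combining Theorem \ref{qualitativo} (componentwise $L^\infty$ closeness of $\varphi_n^N$ to $\varphi_n$), the tail bound \eqref{resto} for the modes $k>N$, and the numerical gap between the constants $0.09,\,0.11,\,9(\alpha+1),\,11(\alpha+1)$ of Definition \ref{finitestability} and the values $\eta=0.1$, $(\alpha+1)/\eta=10(\alpha+1)$ of Definition \ref{unstable}; your write-up correctly identifies and supplies the one quantitative ingredient the paper's ``it easily follows'' hides, namely the uniform lower bound on $\Vert\varphi_j\Vert_{L^\infty(0,\tau)}$ that keeps the perturbed ratios under control. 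The only slip is the claim that the second inequality in \eqref{finitegrande} bounds $\Vert\varphi_k^N\Vert_{L^\infty(0,\tau/2)}$ from \emph{below} --- it gives an upper bound, $\Vert\varphi_k^N\Vert_{L^\infty(0,\tau/2)}\le\Vert\varphi_k^N\Vert_{L^\infty(0,\tau)}/(11(\alpha+1))$, which together with the lower bound on $\Vert\varphi_k^N\Vert_{L^\infty(0,\tau)}$ coming from \eqref{finitegrande}$(i)$ is exactly what the gap computation needs, so the argument goes through as you intend (and, in the stability direction, the case where $\Vert\varphi_k^N\Vert_{L^\infty(0,\tau/2)}$ is genuinely tiny is absorbed by the failure of \eqref{grande}$(i)$).
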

\begin{proof}
Assume that $\{\varphi_n^N\}_{_n}$ (with prevailing mode $j$) is unstable for some sufficiently large integer $N$, so that \eqref{finitegrande} holds for some $k$. On one hand, Theorem \ref{qualitativo} states that the components $\varphi_n^N$, $n=1, \ldots, N$, are arbitrarily near the corresponding components $\varphi_n$ in $L^\infty(0, \tau)$, up to choosing $N$ sufficiently large. On the other hand, enlarging $N$ if necessary, we can assume that the $N$-th remainder $\sum_{n > N} \varphi_n(t)^2$ is arbitrarily small (compare with \eqref{resto} in the autonomous case).
It easily follows that conditions $(i)$-$(ii)$ in \eqref{grande} are satisfied by $\varphi_k$ and the solution of \eqref{nonlinearforced} is unstable. The proof for stability is analogous.
\end{proof}

Theorem \ref{stable} does not come unexpected, in view of Theorem \ref{qualitativo} above; nonetheless, we recall again that the quantitative proofs provided in Section \ref{dimostrazione} give a constructive way to find $\overline{N}$.
\par
Let us explain how we use Theorem \ref{stable} in order to study the stability of \eqref{nonlinearforced}. We first analyze the unforced case \eqref{nonlinear}; we fix the prevailing mode $j$ and observe that here Definition \ref{prevalente} only holds with condition \eqref{smallbig}. We choose a finite time $T > 2T_W = 2$ and we plot the solution of system \eqref{finiteg} for various choices of the number $N$ of modes and several different initial conditions satisfying \eqref{smallbig}. We verify if, for some $N$, there exists $\tau$ with $2 < \tau < T$ such that \eqref{finitegrande} holds for some integer $k \leq N$. If this is the case, we infer instability also for system \eqref{infinite} with the same initial data on the first $N$ components (thanks to Theorem \ref{stable}), otherwise we try to verify if the conditions for stability are fulfilled. If none of these alternatives holds, we increase $N$ or we modify the time $T$ and we repeat the experiment.
\par
We then shift to the forced case, by considering small initial data on all the modes and a forcing term $g(x, t) \neq 0$ acting on a single mode as in \eqref{forzato}. Here, Definition \ref{prevalente} holds with condition \eqref{forzato}, and we aim at seeing if there exist amplitudes $\alpha$ and/or frequencies $\gamma$ which trigger instability. Again, the response of the experiments is validated by Theorem \ref{stable} in its quantitative version (Theorem \ref{approximation} below).
\par
All the numerical experiments have been performed by integrating the considered system of ODE with the program Wolfram Mathematica 10.3$^\copyright$, with a standard machine precision.

\section{The case with the positive part}\label{casepos}

A first possible choice for the nonlinearity $f$ in \eqref{nonlinearforced} is due to McKenna-Walter \cite{McKennaWalter} and reads
$$
f(u)=\mu\, u^+,
$$
where $u^+=\max\{0,u\}$ and $\mu>0$ denotes the Hooke constant of elasticity of steel (hangers). Only the positive part is taken into account due to possible slackening, see
\cite[V-12]{ammann}: the hangers behave as linear springs if extended (when $u>0$) and give no contribution if they lose tension (when $u\le0$). The corresponding equation has been studied in several papers, e.g., \cite{drabek1,drabek3,LazMcK}.

\subsection{Structural properties of the equation}

We consider here the following problem:
\neweq{cdbeampositive}
\left\{\begin{array}{ll}
u_{tt}+u_{xxxx}+\mu u^+=0\quad & \mbox{for }(x,t)\in(0,\pi)\times(0,\infty)\\
u(0,t)=u(\pi,t)=u_{xx}(0,t)=u_{xx}(\pi,t)=0\quad & \mbox{for }t\in(0,\infty)\\
u(x,0)=u_0(x)\, ,\quad u_t(x,0)=u_1(x)\quad & \mbox{for }x\in(0,\pi).
\end{array}\right.\endeq
Notice that, although \eq{cdbeampositive} is not linear, it has the following ``half-linear'' properties:

\begin{proposition}\label{homogeneity}
If $u$ is the solution of \eqref{cdbeampositive} with initial data $u_0$ and $u_1$, then for all $\lambda>0$ the function $\lambda u$ is the
solution of \eqref{cdbeampositive} with initial data $\lambda u_0$ and $\lambda u_1$. If $u$ and $v$ are two solutions of \eqref{cdbeampositive}
(with possibly different initial conditions) having the same sign for all $(x,t)\in(0,\pi)\times [0,\infty)$, then $u+v$ is a solution
of \eqref{cdbeampositive} with initial data $2u_0$ and $2u_1$.
\end{proposition}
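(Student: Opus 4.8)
The plan is to exploit the positive homogeneity of $s \mapsto s^+$, namely $(\lambda s)^+ = \lambda s^+$ for $\lambda > 0$, together with the uniqueness statement of Theorem \ref{exist}.

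\emph{First part.} Suppose $u$ is the (strong) solution of \eqref{cdbeampositive} with data $u_0, u_1$. Set $v := \lambda u$ for a fixed $\lambda > 0$. The plan is to check that $v$ satisfies \eqref{cdbeampositive} with data $\lambda u_0, \lambda u_1$ and then invoke uniqueness. Linearity of the operators $\partial_{tt}$ and $\partial_{xxxx}$ gives $v_{tt} + v_{xxxx} = \lambda(u_{tt} + u_{xxxx}) = -\lambda \mu u^+$; on the other hand $\mu v^+ = \mu(\lambda u)^+ = \lambda \mu u^+$ since $\lambda > 0$, so $v_{tt} + v_{xxxx} + \mu v^+ = 0$. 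The boundary conditions in \eqref{IBC} are linear and homogeneous, hence preserved under multiplication by $\lambda$, and clearly $v(x,0) = \lambda u_0(x)$, $v_t(x,0) = \lambda u_1(x)$. One should phrase this at the level of the weak formulation in Theorem~\ref{exist}$(ii)$ to stay within the regularity class: multiplying the identity $\langle u_{tt},\psi\rangle + \int u_{xx}\psi_{xx} + \int \mu u^+ \psi = 0$ by $\lambda$ and using $(\lambda u)^+ = \lambda u^+$ shows $v$ is a weak solution with the scaled data. Uniqueness (Theorem \ref{exist}) then forces $v$ to be \emph{the} solution with data $\lambda u_0, \lambda u_1$.

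\emph{Second part.} Let $u, v$ solve \eqref{cdbeampositive} (with data $u_0,u_1$ and, implicitly from the statement, the \emph{same} data, so that $v_0 = u_0$, $v_1 = u_1$ — this is what makes $u+v$ have data $2u_0, 2u_1$) and assume they have the same sign at every $(x,t) \in (0,\pi)\times[0,\infty)$. Put $w := u + v$. As before, $w_{tt} + w_{xxxx} = -\mu(u^+ + v^+)$. The key pointwise observation is that if $u(x,t)$ and $v(x,t)$ have the same sign then $(u+v)^+ = u^+ + v^+$: indeed, when both are $\geq 0$ the left side is $u+v = u^+ + v^+$, and when both are $\leq 0$ the left side is $0 = u^+ + v^+$. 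Hence $w_{tt} + w_{xxxx} + \mu w^+ = 0$, $w$ inherits the boundary conditions by linearity, and $w(x,0) = 2u_0(x)$, $w_t(x,0) = 2u_1(x)$. Uniqueness again identifies $w$ with the solution of \eqref{cdbeampositive} having data $2u_0, 2u_1$.

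\emph{Main obstacle.} The only genuinely delicate point is that these manipulations should be carried out in the weak (or strong) sense consistent with Theorem~\ref{exist} rather than assuming classical regularity; in particular one must verify that $u^+$ has the requisite integrability — which follows since $u(t) \in H^2_*(0,\pi) \hookrightarrow C^0[0,\pi]$ — and that the sign hypothesis is used only through the pointwise identity $(u+v)^+ = u^+ + v^+$, valid a.e.\ in $x$ for a.e.\ $t$. Everything else is a direct consequence of linearity of the differential operators plus positive homogeneity of the positive part, closed off by the uniqueness clause of Theorem~\ref{exist}.
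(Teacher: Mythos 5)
Your argument is correct and is exactly the intended one: the paper in fact states Proposition \ref{homogeneity} without proof, treating it as an immediate consequence of the positive homogeneity $(\lambda s)^+=\lambda s^+$, the additivity $(u+v)^+=u^++v^+$ for same-sign functions, the linearity of $\partial_{tt}+\partial_{xxxx}$, and the uniqueness clause of Theorem \ref{exist}, all of which you verify. Your side remark that the conclusion ``initial data $2u_0$ and $2u_1$'' in the second part only matches the hypothesis ``possibly different initial conditions'' if one reads the data as $u_0+v_0$ and $u_1+v_1$ is a fair observation about the statement's wording, not a gap in your proof.
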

This almost linear behaviour is confirmed by the
following fact which shows that \eq{cdbeampositive} is reluctant to the energy transfer between certain modes.

\begin{theorem}\label{evenodd}
Assume that
\neweq{datiiniziali}
u_0(x)=\sum_{n=0}^\infty a_n\sin\big((2n+1)x\big)\ ,\quad u_1(x)=\sum_{n=0}^\infty b_n\sin\big((2n+1)x\big)\, ,
\endeq
with $\{a_n\}_{_n} \in \ell^2_4$ and $\{b_n\}_{_n} \in \ell^2_2$.
Then, the solution of \eqref{cdbeampositive} is given by
\neweq{onlyodd}
u(x,t)=\sum_{n=0}^\infty \phi_{2n+1}(t)\sin\big((2n+1)x\big)\, ,
\endeq
for suitable functions $\varphi_{2n+1} \in C^2(\R_+)$ ($n=0, 1, \ldots$).
Moreover, if
$$u_0(x)=a\sin(x)\ \quad \textrm{ and } \quad u_1(x)=b\sin(x)$$
for some $a,b\in\R$, then there exists $\varphi_1\in C^2(\R_+)$ such that the solution of \eqref{cdbeampositive} has the form
\begin{equation}\label{onlyfirst}
u(x,t)=\phi_1(t)\sin(x)\qquad\forall(x,t)\in(0,\pi)\times\R_+\, .
\end{equation}
\end{theorem}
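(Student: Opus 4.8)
The plan is to deduce both claims from the uniqueness part of Theorem~\ref{exist}: the first via the reflection symmetry of \eqref{cdbeampositive} about the midpoint $x=\pi/2$, the second via an explicit one-mode ansatz.

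First I would record the elementary fact that a function $w\in H^2_*(0,\pi)$ has a Fourier sine expansion supported on odd indices if and only if it is symmetric about $\pi/2$, i.e. $w(\pi-x)=w(x)$; indeed $\sin\big((2n+1)(\pi-x)\big)=\sin\big((2n+1)x\big)$ while $\sin\big(2n(\pi-x)\big)=-\sin(2nx)$. Hence \eqref{onlyodd} is equivalent to saying that the unique solution $u$ of \eqref{cdbeampositive} satisfies $u(\pi-x,t)=u(x,t)$ for all $t\ge0$. To prove this I would set $v(x,t):=u(\pi-x,t)$ and check that $v$ solves the same initial–boundary value problem: the operator $\partial_x^4$ is invariant under $x\mapsto\pi-x$; the positive part commutes with the reflection, $\big(u(\pi-x,t)\big)^+=u^+(\pi-x,t)$; the Navier conditions in \eqref{cdbeampositive} are merely interchanged at the two endpoints; and, by \eqref{datiiniziali}, $u_0(\pi-x)=u_0(x)$ and $u_1(\pi-x)=u_1(x)$. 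Since the reflection is an isometry of each space $H^k_*(0,\pi)$ and commutes with $t$-differentiation, $v$ lies in the same regularity class as $u$, so uniqueness in Theorem~\ref{exist} forces $v\equiv u$. This is the desired symmetry, whence $\varphi_{2n}\equiv0$ for every $n\ge1$ and \eqref{onlyodd} follows. The regularity $\varphi_{2n+1}\in C^2(\R_+)$ is then read off from $\varphi_{2n+1}(t)=\tfrac2\pi\int_0^\pi u(x,t)\sin\big((2n+1)x\big)\,dx$ using the continuity in $t$ of $u$, $u_t$, $u_{tt}$ with values in $H^4_*(0,\pi)$, $H^2_*(0,\pi)$, $L^2(0,\pi)$.

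For the last assertion I would argue directly. On $(0,\pi)$ one has $\sin x>0$, hence $\big(\varphi_1(t)\sin x\big)^+=\varphi_1(t)^+\,\sin x$; inserting the ansatz $u(x,t)=\varphi_1(t)\sin x$ into \eqref{cdbeampositive} and projecting onto $\sin x$ shows that $u$ is a solution if and only if $\varphi_1$ solves the scalar Cauchy problem $\ddot\varphi_1+\varphi_1+\mu\varphi_1^+=0$, $\varphi_1(0)=a$, $\dot\varphi_1(0)=b$. As $s\mapsto s+\mu s^+$ is globally Lipschitz, this problem has a unique global solution $\varphi_1\in C^2(\R_+)$; the corresponding $u(x,t)=\varphi_1(t)\sin x$ belongs to the regularity class of Theorem~\ref{exist}, satisfies \eqref{IBC} and the prescribed data, so by uniqueness it is the solution of \eqref{cdbeampositive}, which is \eqref{onlyfirst}.

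There is no substantial obstacle here; the point requiring the most care is the verification that $v(x,t)=u(\pi-x,t)$ truly satisfies the weak formulation of Theorem~\ref{exist}\,(ii), which hinges on the commutation $f\big(u(\pi-x,t)\big)=\big(f(u)\big)(\pi-x,t)$ of the superposition operator with the reflection and on $v$ inheriting the function space of $u$, so that uniqueness applies. For the second part, the only step genuinely specific to $f(u)=\mu u^+$ is the identity $(\varphi_1\sin x)^+=\varphi_1^+\sin x$ on $(0,\pi)$, which is what keeps the equation closed on the single mode $\sin x$; for a general $f$ this would fail (e.g. $f(u)=u^3$ already generates a $\sin(3x)$ component).
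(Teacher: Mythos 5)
Your proof is correct, and for the first assertion it takes a genuinely different route from the paper. The paper proves \eqref{onlyodd} by expanding $u(x,t)=\sin(x)\sum_n\varphi_n(t)P_{n-1}(\cos x)$ via a multiple-angle formula, observing that $P_{n}(\cos x)$ is symmetric (resp.\ skew-symmetric) about $x=\pi/2$ for $n$ even (resp.\ odd), and then checking that the coupling integrals $\int_0^\pi(\cdot)^+\sin(x)\sin(2nx)\,dx$ vanish because they pair a symmetric integrand with a skew-symmetric weight; it then invokes uniqueness after positing a solution supported on the odd modes. Your argument exploits the same underlying fact --- odd-mode support is equivalent to symmetry about $x=\pi/2$ --- but applies uniqueness directly to the reflected function $v(x,t)=u(\pi-x,t)$, which visibly solves the same initial--boundary value problem in the same regularity class. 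This is shorter, avoids the multiple-angle machinery, and sidesteps the one slightly delicate step in the paper's proof (the tacitly assumed existence of a solution of the reduced odd-mode system, which strictly would require rerunning the Galerkin construction on that subspace). It is also more general: since any superposition operator $f(u)$ commutes with the reflection, your argument gives for free the analogous statements for $f(u)=u^3$ and $f(u)=(u^+)^3$ (Theorems \ref{evenoddcompleto} and \ref{evenodd2}), whereas the paper's computation is tailored to the positive part. What the paper's approach buys in exchange is the explicit form \eqref{esplicito} of the projected system and the polynomials $P_n$, which are reused elsewhere. For the second assertion your proof coincides with the paper's: the identity $(\varphi_1\sin x)^+=\varphi_1^+\sin x$ on $(0,\pi)$ closes the equation on the first mode, and uniqueness of the resulting scalar Cauchy problem concludes.
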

\begin{proof}
We first recall a {\em multiple-angle formula}, see e.g.\ formula (11) in \cite{weistein}: for any positive integer $l$, it holds
\neweq{formula}
\sin(lx)=\sin(x)\sum_{i=0}^{\mathcal{I}((l-1)/2)}(-1)^i\left(\begin{array}{c}
l-i-1\\
i
\end{array}\right)2^{l-2i-1}\cos^{l-2i-1}(x),
\endeq
where $\mathcal{I}(\cdot)$ denotes the integer part.\par
We consider the solution of \eq{cdbeampositive}, written in the form \eq{general}: by \eq{formula}, we have
$$
u(x,t)=\sin(x)\sum_{n=1}^\infty\phi_n(t)P_{n-1}\big(\cos(x)\big),
$$
where $P_{n-1}$ is a polynomial of degree $n-1$ with respect to $\cos(x)$ (for instance, $P_0(x) \equiv 1$, $P_1(x)=2\cos (x)$, $P_2(x)=4\cos^2(x)-1$). Moreover, $P_{n}$ contains only even (resp.\ odd) powers of $\cos(x)$
if $n$ is even (resp.\ odd), so that
\neweq{rule}
\begin{array}{c}
P_{n}\big(\cos(x)\big)\mbox{ is symmetric with respect to $x=\tfrac\pi2$ if $n$ is even,}\\
P_{n}\big(\cos(x)\big)\mbox{ is skew-symmetric with respect to $x=\tfrac\pi2$ if $n$ is odd.}
\end{array}
\endeq
Recalling that $x \in [0, \pi]$, it follows that
$$
u^+(x,t)=\sin(x)\Big(\sum_{n=1}^\infty\phi_n(t)P_{n-1}\big(\cos(x)\big)\Big)^+
$$
and, for every $n$, $\phi_n$ satisfies
\begin{equation}\label{esplicito}
\ddot{\phi}_n(t)+n^4\phi_n(t)+\frac{2\mu}{\pi}\int_0^\pi\left(\sum_{m=1}^\infty\phi_m(t)P_{m-1}\big(\cos(x)\big)\right)^+\sin(x)\sin(nx)\, dx=0\, .
\end{equation}
By the uniqueness statement in Theorem \ref{exist}, it suffices to prove that, if $u_0$ and $u_1$ are as in \eq{datiiniziali}, then problem
\eq{cdbeampositive} admits a solution in the form \eq{onlyodd}, that is, $\varphi_{2n} \equiv 0$ for every $n$. We thus assume that there exists $\{\varphi_{2n+1}\}_{_n}$ (Fourier components of $u(x, t)$ as in \eq{onlyodd}) solving \eqref{esplicito} and
we notice that, as a consequence, the thesis is equivalent to showing that, for every even integer $2n$, it holds
$$
\int_0^\pi \left(\sum_{m=1}^\infty\phi_m(t)P_{m-1}\big(\cos(x)\big)\right)^+\sin(x)\sin(2nx)\, dx=0\,,
$$
namely (since we are assuming $\varphi_{2n} \equiv 0$ for every $n$)
$$
\int_0^\pi \left(\sum_{m=0}^\infty\phi_{2m+1}(t)P_{2m}\big(\cos(x)\big)\right)^+\sin(x)\sin(2nx)\, dx=0\,.
$$
However, the function $\sin(x)\sin(2nx)$ is
skew-symmetric with respect to $x=\tfrac\pi2$, while the positive part under the integral sign is symmetric with respect to $x=\tfrac\pi2$, in view
of \eq{rule}. This means that the last expression is indeed equal to $0$, and the thesis is proved.\par
The second part of the theorem can be proved analogously. Indeed, let $u(x, t)$ be a solution of the form \eqref{onlyfirst};
in view of \eqref{esplicito}, the thesis is now equivalent to showing that, for every $n \neq 1$, it holds
$$
\frac{2\mu \varphi_1^+(t)
}{\pi}\int_0^\pi\sin(x)\sin(nx)\, dx=0\, ,
$$
which is true in view of the orthogonality of $\sin(x)$ and $\sin(nx)$ on $(0, \pi)$.
The solution is then found by solving explicitly the Cauchy problem
$$
\ddot{\varphi}_1 + \varphi_1 + \mu \varphi_1^+ = 0\ ,\qquad \varphi_1(0)=a\ ,\qquad\dot{\varphi}_1(0)=b\ ,
$$
which admits a unique global $C^2$-solution.\end{proof}

Theorem \ref{evenodd} states that there is no transfer of energy from odd modes to even modes: if the initial data do not contain even modes then also
the solution does not, for all $t>0$. Moreover, in the particular case where the initial data only contain the first mode, also the solution
does, for all $t>0$. In fact, in this latter case, the solution may be found explicitly. For instance, if $\mu = 3$ and
\neweq{kinetic+}
u_0(x)=0\, ,\quad u_1(x)=\sin(x),
\endeq
then the solution of \eq{cdbeampositive}-\eq{kinetic+} is given by
$$
u(x,t)=\phi_1(t)\sin(x)\qquad\mbox{with }\phi_1(t)=\left\{\begin{array}{ll}\frac12 \sin(2t)\ & \mbox{if }0\le t\le\tfrac\pi2\\
\sin(\tfrac\pi2-t)\ & \mbox{if }\tfrac\pi2 \le t\le \tfrac{3\pi}{2}\\
\tfrac{3\pi}{2}\, \mbox{-\,periodic}\end{array}\right.\qquad\quad
\begin{array}{l}
\includegraphics[height=16mm, width=30mm]{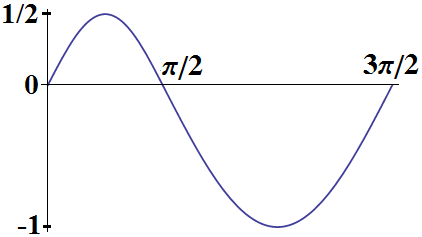}
\end{array}
$$
so that $u$ is $C^2$ in time; note that the negative part of $u$ is larger than the positive part.\par
On the other hand, the next result states that the first mode is ``attractive''.

\begin{theorem}\label{highermodes}
Assume that
\neweq{nontrivial}
u(x,t)=\sum_{n=1}^\infty\phi_n(t)\sin(nx)\not\equiv0
\endeq
is a solution of \eqref{cdbeampositive}. Then $\phi_1(t)\not\equiv0$.
\end{theorem}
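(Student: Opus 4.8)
The plan is to argue by contradiction: assume that $u=\sum_{n\ge 1}\varphi_n(t)\sin(nx)\not\equiv 0$ solves \eqref{cdbeampositive} but $\varphi_1\equiv 0$. The whole argument then reduces to two elementary sign observations applied to the first equation of the Fourier system, exploiting that $\sin x$ is precisely the first eigenfunction and is strictly positive on $(0,\pi)$.

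\smallskip
First I would record the $n=1$ line of the system. Testing the weak formulation $(ii)$ of Theorem~\ref{exist} with $\psi=\sin x\in H^2_*(0,\pi)$ and integrating by parts twice, using the Navier conditions $u(0,t)=u(\pi,t)=u_{xx}(0,t)=u_{xx}(\pi,t)=0$, yields (this is just the $n=1$ component of \eqref{infinite} with $f(u)=\mu u^+$)
$$
\ddot\varphi_1(t)+\varphi_1(t)+\frac{2\mu}{\pi}\int_0^\pi u^+(x,t)\,\sin x\,dx=0 .
$$
Since $\varphi_1\equiv 0$ (hence $\dot\varphi_1\equiv 0$ and $\ddot\varphi_1\equiv 0$), this forces $\int_0^\pi u^+(x,t)\sin x\,dx=0$ for every $t\ge 0$. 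As $u^+(\cdot,t)\ge 0$ and $\sin x>0$ on $(0,\pi)$, the integrand is nonnegative and therefore vanishes a.e., so $u^+(\cdot,t)\equiv 0$; by continuity of $u$ in $x$ (Sobolev embedding $H^2_*(0,\pi)\hookrightarrow C^0([0,\pi])$, via Theorem~\ref{exist}) we get $u(x,t)\le 0$ for all $(x,t)\in[0,\pi]\times\R_+$. Next I would feed this sign information back into the Fourier coefficient itself: by definition $\varphi_1(t)=\tfrac{2}{\pi}\int_0^\pi u(x,t)\sin x\,dx$, which is $0$ by assumption; but now $u(x,t)\le 0$ and $\sin x>0$ on $(0,\pi)$ give $u(x,t)\sin x\le 0$, so a vanishing integral forces $u(x,t)\sin x=0$ a.e., hence $u(x,t)=0$ for a.e.\ $x$, for every $t$. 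Continuity in $x$ then gives $u(\cdot,t)\equiv 0$ for all $t$, i.e.\ $u\equiv 0$, contradicting \eqref{nontrivial}. Thus $\varphi_1\not\equiv 0$.

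\smallskip
I do not expect a genuine obstacle here. The only points requiring (routine) care are: that the displayed $n=1$ identity holds for the solution supplied by Theorem~\ref{exist} — which follows by choosing $\psi=\sin x$ in the weak formulation, noting $t\mapsto\int_0^\pi u_t(t)\sin x\,dx=\tfrac{\pi}{2}\dot\varphi_1(t)$ and integrating by parts in $x$ — and the elementary implication ``integral of a sign-definite continuous integrand against a weight positive on $(0,\pi)$ vanishes $\Rightarrow$ the integrand vanishes'', used twice. The real content of the statement is simply the incompatibility of $u^+\ge 0$, $\sin x>0$ on $(0,\pi)$, and $\varphi_1\equiv 0$, which is what makes the first mode ``attractive'', i.e.\ impossible to switch off.
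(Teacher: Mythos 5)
Your proof is correct and follows essentially the same route as the paper: project the equation onto $\sin x$, use $\varphi_1\equiv 0$ to force $\int_0^\pi u^+\sin x\,dx=0$ and hence $u\le 0$, then combine the vanishing first Fourier coefficient with $u\le 0$ and $\sin x>0$ to conclude $u\equiv 0$. Your ``by definition $\varphi_1=\tfrac{2}{\pi}\int_0^\pi u\sin x\,dx=0$'' step is precisely the orthogonality observation the paper uses, so the two arguments coincide.
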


\begin{proof}
Consider the solution of \eq{cdbeampositive} written in the form \eq{nontrivial}. If we multiply the equation in \eq{cdbeampositive} by $\sin(x)$ and we integrate over $(0,\pi)$, we obtain
$$
\ddot{\phi}_1(t)+\phi_1(t)+\frac{2\mu}{\pi}\int_0^\pi\left(\sum_{m=1}^\infty\phi_m(t)\sin(mx)\right)^+\sin(x)\, dx=0\, .
$$
By contradiction, if $\phi_1\equiv0$, then the previous equation yields
$$
\int_0^\pi\left(\sum_{m=2}^\infty\phi_m(t)\sin(mx)\right)^+\sin(x)\, dx=0\qquad\forall t\in\R_+\, .
$$
In turn, since $\sin(x)>0$ in $(0,\pi)$, this implies
$$\left(\sum_{m=2}^\infty\phi_m(t)\sin(mx)\right)^+=0\qquad\forall(x,t)\in(0,\pi)\times\R_+$$
and hence
$$
\sum_{m=2}^\infty\phi_m(t)\sin(mx)\le0\qquad\forall(x,t)\in(0,\pi)\times\R_+\, .
$$
Since by orthogonality it is
$$
\int_0^\pi\Big(\sum_{m=2}^\infty\phi_m(t)\sin(mx)\Big)\sin(x)\, dx=0\qquad\forall t\in\R_+\, ,
$$
this proves that
$$
\sum_{m=2}^\infty\phi_m(t)\sin(mx)=0\qquad\forall(x,t)\in(0,\pi)\times\R_+
$$
and contradicts $u\not\equiv0$.
\end{proof}

Theorem \ref{highermodes} states that, even if the initial data have zero component on the first mode, the solution of \eqref{cdbeampositive} always has a nonzero
component on the first mode. This one-way transfer of energy is clearly due to the particular nonlinearity $f(u)=\mu u^+$, since the
first mode is the only one having fixed sign. More generally, there is a one-way transfer of energy also between even and odd modes. As a simple example we prove the following.

\begin{theorem}\label{secondmode}
All the odd modes of the solution of the nonlinear problem \eqref{cdbeampositive} with initial conditions
\neweq{potential+}
u_0(x)=\sin(2x)\, ,\quad u_1(x)=0
\endeq
have a nontrivial Fourier coefficient $\varphi_n(t) \not\equiv 0$ for $t>0$.
\end{theorem}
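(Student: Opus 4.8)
The plan is to read off the second--order Taylor coefficient at $t=0$ of each odd Fourier component of the solution and to check that it does not vanish. Since $u_0=\sin(2x)\in H^4_*(0,\pi)$ and $u_1=0\in H^2_*(0,\pi)$, the last part of Theorem \ref{exist} applies: the solution of \eqref{cdbeampositive}-\eqref{potential+} is strong, with $u_{tt}\in C^0(\R_+;L^2(0,\pi))$; hence (exactly as in the proof of Theorem \ref{evenodd}) every $\varphi_n$ lies in $C^2(\R_+)$ and solves \eqref{esplicito}, equivalently the mode-$n$ projection of \eqref{cdbeampositive},
$$
\ddot\varphi_n(t)+n^4\varphi_n(t)+\frac{2\mu}{\pi}\int_0^\pi\Big(\sum_{m=1}^\infty\varphi_m(t)\sin(mx)\Big)^+\sin(nx)\,dx=0 .
$$
Because the initial data are carried by the second mode alone, $\varphi_n(0)=\dot\varphi_n(0)=0$ for every odd $n$. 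Evaluating the identity above at $t=0$, using $\sum_m\varphi_m(0)\sin(mx)=\sin(2x)$ and the orthogonality $\int_0^\pi\sin(2x)\sin(nx)\,dx=0$ for $n\neq2$, one obtains, for $n$ odd,
$$
\ddot\varphi_n(0)=-\frac{2\mu}{\pi}\int_0^\pi(\sin 2x)^+\sin(nx)\,dx .
$$

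The key point is that this integral is nonzero for every odd $n$. Since $(\sin 2x)^+$ coincides with $\sin(2x)$ on $(0,\tfrac\pi2)$ and vanishes on $(\tfrac\pi2,\pi)$, an elementary computation with product-to-sum formulas gives, for $n\neq2$,
$$
\int_0^\pi(\sin 2x)^+\sin(nx)\,dx=\int_0^{\pi/2}\sin(2x)\sin(nx)\,dx=-\,\frac{2\sin(n\pi/2)}{n^2-4},
$$
which for odd $n$ is nonzero, as $\sin(n\pi/2)=\pm1$ and $n^2\neq4$; informally, the obstruction $(\sin 2x)^+$ produced by the nonlinear term has a nontrivial projection on every odd mode. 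Hence $\ddot\varphi_n(0)\neq0$ for each odd $n$, and since $\varphi_n\in C^2(\R_+)$ with $\varphi_n(0)=\dot\varphi_n(0)=0$, Taylor's formula yields $\varphi_n(t)=\tfrac12\ddot\varphi_n(0)\,t^2+o(t^2)$ as $t\to0^+$, so that $\varphi_n(t)\neq0$ for all sufficiently small $t>0$; in particular $\varphi_n\not\equiv0$. (Equivalently, if $\varphi_n\equiv0$ for some odd $n$, then $\ddot\varphi_n(0)=0$, contradicting the computation above.)

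I do not foresee a genuine obstacle. The only steps requiring a little care are the regularity bookkeeping that legitimizes differentiating twice and evaluating the equation at $t=0$ --- which is precisely the strong-solution statement of Theorem \ref{exist}, used as in the proof of Theorem \ref{evenodd} --- and the verification that the elementary Fourier coefficient of $(\sin 2x)^+$ fails to vanish for \emph{every} odd index, including the small ones $n=1,3$.
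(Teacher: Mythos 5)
Your proof is correct and follows essentially the same route as the paper: evaluate the projected equation at $t=0$, use that the datum is $\sin(2x)$ so the nonlinear term contributes $-\tfrac{2\mu}{\pi}\int_0^{\pi/2}\sin(2x)\sin(nx)\,dx$, and check this is nonzero for every odd $n$ (your closed form $-2\sin(n\pi/2)/(n^2-4)$ agrees with the paper's case-by-case values of $\alpha_{n,2}(0)$ modulo $4$). The paper concludes directly from $\ddot\varphi_n(0)\neq0$ exactly as in your parenthetical remark, so no further comment is needed.
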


\begin{proof}
For \eq{cdbeampositive}, system \eqref{infinite} yields the following equation for the $n$-th Fourier coefficient:
\neweq{thirdattempt}
\ddot{\phi}_n(t)+n^4\phi_n(t)+\frac{2\mu}{\pi}\int_0^\pi\left(\sum_{m=1}^\infty\phi_m(t)\sin(mx)\right)^+\sin(nx)\, dx=0\, .
\endeq
The positivity of the solution $u(x,t)$ obviously depends on $t$:
$$
\forall t\ge0\quad\exists I_t\subset[0,\pi]\qquad\mbox{s.t.}\qquad u(x,t)>0\Longleftrightarrow x\in I_t\, .
$$
Defining
$$
\alpha_{n,m}(t):=\int_{I_t}\sin(nx)\sin(mx)\, dx\qquad(t\ge0),
$$
equation \eq{thirdattempt} becomes
\neweq{fourthattempt}
\ddot{\phi}_n(t)+n^4\phi_n(t)+\frac{2\mu}{\pi}\sum_{m=1}^\infty\alpha_{n,m}(t)\phi_m(t)=0\, .
\endeq
Notice that $I_0=(0,\tfrac\pi2)$ and that
\neweq{alpha}
\alpha_{n,2}(0)=\left\{\begin{array}{ll}
\frac\pi4\quad & \mbox{if }n=2\\
0\quad & \mbox{if }n\mbox{ is even and }n\neq2\\
\tfrac{2}{4-n^2} \quad & \mbox{if }n \equiv 1\, \mbox{mod }4\\
\tfrac{2}{n^2-4} \quad & \mbox{if }n \equiv 3\, \mbox{mod }4\, .
\end{array}\right.\endeq
In view of \eq{potential+} we also have
\neweq{phizero}
\phi_2(0)=1\qquad\mbox{and}\qquad\phi_n(0)=0\quad\forall n\neq2\, ,\qquad\dot{\phi}_n(0)=0\quad\forall n\in\N\, ;
\endeq
therefore, \eq{fourthattempt} at $t=0$ becomes
$$
\ddot{\phi}_n(0)+n^4\phi_n(0)+\frac{2\mu}{\pi}\alpha_{n,2}(0)=0\qquad(n\in\N)\, .
$$
Combined with \eq{alpha} and \eq{phizero}, this readily yields
$$
\begin{array}{c}
\ddot{\phi}_2(0)=-\frac{32+\mu}{2}\, ,\qquad\ddot{\phi}_n(0)=0\mbox{ if }n\mbox{ is even and }n\neq2\, ,\\
\ddot{\phi}_n(0)=\tfrac{4\mu}{(n^2-4)\pi}\mbox{ if }n \equiv 1\, \mbox{mod }4\, ,\qquad
\ddot{\phi}_n(0)=\tfrac{4\mu}{(4-n^2)\pi}\mbox{ if }n \equiv 3\, \mbox{mod }4\, .
\end{array}
$$
Consequently, as soon as $t>0$ all the odd modes have nontrivial coefficients, some of them starting positive and some others starting negative.
\end{proof}

The forced situation is more delicate and deserves some comments. Indeed, since the problem is asymptotically linear, it is likely that some resonance phenomena appear for suitable frequencies of the forcing term which may interact with the characteristic frequency of the system, so as to indefinitely amplify the oscillations of the solution. For a single scalar asymmetric equation like
$$
\ddot{y}(t) + \alpha y^+(t) - \beta y^-(t) = e(t), 
$$
where $y^-=\max\{-y, 0\}$ and $\alpha, \beta$ are positive numbers, to recognize the occurrence of this phenomenon it is natural to analyze the nodal properties of the function
$$
\Phi_{\alpha, \beta, e}(\theta):= \int_0^{T} e(t)
\, \psi_{\alpha, \beta}(t+\theta) \, dt,
$$
where $\psi_{\alpha, \beta}$ is the nontrivial solution of the homogeneous asymmetric equation $\ddot{y} + \alpha y^+ - \beta y^- = 0$ fulfilling the initial conditions $y(0)=0$, $\dot{y}(0)=1$. For instance, if there exists a positive integer $M$ such that $(\alpha, \beta)$ lies on the $M$-th Fu\v{c}ik curve (namely, $\pi/\sqrt{\alpha} + \pi/\sqrt{\beta} = T/M$), there are forcing terms for which all the solutions are unbounded (for instance, $e(t)=\cos (2\pi M/T) t$, see \cite{Fab00}).
For further details about resonance for asymmetric oscillators, we mention for example the papers \cite{AloOrt98, Dan76, Fab00, FabFon98, Maw07, Ort02}.
\par
Taking into account the second part of the statement of Theorem \ref{evenodd}, such considerations can be extended to our system whenever the initial datum and the forcing term are completely concentrated on the first mode. As an example, we give the following statement, inspired by \cite{Ort02}.

\begin{theorem}\label{forz}
There exists a periodic function $e(t)$ of class $C^\infty$, having period equal to
\begin{equation}\label{periodoill}
\bar{T}=\frac{\pi}{\sqrt{\mu+1}} + \pi,
\end{equation}
such that, for any $0 < \vert \epsilon \vert \leq 1$, the solution
$$
u(x, t)= \sum_{n=1}^\infty \varphi_n(t) \sin (nx)
$$
of the problem
$$
\left\{
\begin{array}{ll}
u_{tt}+u_{xxxx}+\mu u^+=\epsilon e(t) \sin (x) \quad & \mbox{for }(x,t)\in(0,\pi)\times(0,\infty)\\
u(0,t)=u(\pi,t)=u_{xx}(0,t)=u_{xx}(\pi,t)=0\quad & \mbox{for }t\in(0,\infty)\\
u(x,0)=\sin(x)\, ,\quad u_t(x,0) = 0\quad & \mbox{for }x\in(0,\pi), \vspace{0.1cm}
\end{array}
\right.
$$
is such that $\varphi_1(t)$ is unbounded and $\varphi_n(t) \equiv 0$ for every integer $n > 1$.
\end{theorem}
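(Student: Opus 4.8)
The plan is to use the fact that both the initial data $u_0=\sin x$, $u_1=0$ and the forcing $\epsilon e(t)\sin x$ live entirely on the first Fourier mode, so that the dynamics is genuinely one-dimensional, and then to recognize the resulting scalar equation as an asymmetric oscillator sitting exactly on a Fu\v{c}ik resonance for the period $\bar{T}$ in \eqref{periodoill}. First I would argue, exactly as in the second part of the proof of Theorem~\ref{evenodd} (whose only ingredient is the orthogonality of $\sin x$ and $\sin(nx)$ on $(0,\pi)$, which is unaffected by a forcing term supported on the first mode), that the unique solution provided by Theorem~\ref{exist} has the form $u(x,t)=\varphi_1(t)\sin x$ with $\varphi_n\equiv 0$ for all $n>1$, and that $\varphi_1$ solves the Cauchy problem
$$
\ddot{\varphi}_1+\varphi_1+\mu\varphi_1^+=\epsilon\, e(t),\qquad \varphi_1(0)=1,\quad \dot{\varphi}_1(0)=0 .
$$
Writing $\varphi_1+\mu\varphi_1^+=(\mu+1)\varphi_1^+-\varphi_1^-$, this becomes the asymmetric equation $\ddot{\varphi}_1+(\mu+1)\varphi_1^+-\varphi_1^-=\epsilon\, e(t)$, whose frequency pair $(\alpha,\beta)=(\mu+1,1)$ lies on the \emph{first} Fu\v{c}ik curve relative to $\bar{T}$, since $\pi/\sqrt{\mu+1}+\pi/\sqrt{1}=\bar{T}$; equivalently the homogeneous equation $\ddot y+(\mu+1)y^+-y^-=0$ admits a one-parameter family of $\bar{T}$-periodic solutions, all positive rescalings of time-translates of $\psi:=\psi_{\mu+1,1}$. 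Thus the assertion $\varphi_n\equiv0$ for $n>1$ is already settled, and everything reduces to proving unboundedness of $\varphi_1$ in the scalar problem above.

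Next, following the circle of ideas of \cite{Fab00,Ort02}, I would choose the forcing $e$ to be a $C^\infty$, $\bar{T}$-periodic function resonating with $\psi$, i.e.\ one for which the function
$$
\Phi_{\mu+1,1,e}(\theta)=\int_0^{\bar{T}} e(t)\,\psi(t+\theta)\,dt
$$
has the nodal structure required by the resonance criterion for asymmetric oscillators; one natural candidate is a low-order trigonometric polynomial built on the fundamental frequency $2\pi/\bar{T}$ (possibly a small smooth perturbation thereof, needed only to adjust the nodal behaviour of $\Phi_{\mu+1,1,e}$), which is automatically of class $C^\infty$. For such $e$, the cited results guarantee that every solution of $\ddot x+(\mu+1)x^+-x^-=e(t)$ of sufficiently large amplitude is unbounded; since the amplitude threshold above which this holds scales with $\|e\|_\infty$, after replacing $e$ by $\delta e$ for $\delta$ small we may assume this threshold is smaller than $1$, while $\bar{T}$-periodicity, smoothness and the resonance property are preserved.

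It then remains to transfer this conclusion to our Cauchy problem uniformly in $\epsilon$, and here the positive $1$-homogeneity of $s\mapsto (\mu+1)s^+-s^-$ is the key. For $\epsilon>0$ the substitution $\varphi_1=\epsilon z$ turns the equation into $\ddot z+(\mu+1)z^+-z^-=e(t)$ with $z(0)=1/\epsilon\ge 1$ and $\dot z(0)=0$; for $\epsilon<0$, writing $\varphi_1=\epsilon z$ and then $w=-z$ yields the same equation with forcing $-e$ and $w(0)=1/|\epsilon|\ge 1$, $\dot w(0)=0$, and $-e$ is a time-translate of $e$, hence equally resonant. In all cases $0<|\epsilon|\le 1$ the initial amplitude is at least $1$, thus above the threshold of the previous step, so the corresponding solution $z$ is unbounded, and therefore so is $\varphi_1=\epsilon z$ (respectively $\varphi_1=-|\epsilon|w$). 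This completes the proof.

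The main obstacle is the resonance step. Unlike the linear resonant oscillator, the asymmetric one enjoys no superposition principle, so one cannot simply exhibit one unbounded particular solution and add bounded homogeneous ones: one must control the slow evolution of the energy--phase pair of a large-amplitude solution over successive near-$\bar{T}$ loops and show that, for the chosen $e$, the energy drifts to $+\infty$ regardless of the initial phase on the relevant half-axis of data. This is precisely what the resonance theorems of \cite{Fab00,Ort02} provide; the only points requiring care in adapting them here are to keep the forcing of class $C^\infty$ without spoiling the needed (constant-sign/nodal) property of $\Phi_{\mu+1,1,e}$, and to make the amplitude threshold small enough to capture the data with $|\varphi_1(0)|=1$, which is arranged by the rescaling $e\mapsto\delta e$ above.
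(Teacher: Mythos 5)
Your proposal is correct and follows essentially the same route as the paper: the paper's proof is precisely the reduction to the first mode via (the forced analogue of) Theorem~\ref{evenodd} followed by a direct application of \cite[Theorem 3]{Ort02} to the scalar asymmetric equation $\ddot\varphi_1+(\mu+1)\varphi_1^+-\varphi_1^-=\epsilon e(t)$, whose Fu\v{c}ik pair $(\mu+1,1)$ sits on the resonance curve for the period $\bar T$. Your reconstruction of the resonance mechanism and the homogeneity rescaling in $\epsilon$ fills in what the citation delivers ready-made; the only cosmetic slip is the claim that $-e$ is a time-translate of $e$, which is unnecessary since $\Phi_{\mu+1,1,-e}=-\Phi_{\mu+1,1,e}$ already preserves the required nodal condition.
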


\begin{proof}
The statement of Theorem \ref{forz} follows by applying directly \cite[Theorem 3]{Ort02}. Actually, the solution is all concentrated on the first mode by Theorem \ref{evenodd} and the thesis follows because the described behaviour is proper of the scalar asymmetric equation $\ddot\varphi_1(t) + (\mu+1) \varphi_1^+(t) - \varphi_1^-(t)= e(t)$.
\end{proof}

It is worth noticing that, in the situation of Theorem \ref{forz}, the oscillations of the first mode grow indefinitely but no transfer of energy takes place, all the residual modes remain equal to $0$ for any time instant and hence there is no instability (in the sense of our definitions).
\par
On the other hand, if we try to extend Theorem \ref{forz} to a mode other than the first, we have to take into account that the first mode always absorbs energy in view of Theorem \ref{highermodes}. Hence the picture appears different and, rather than unboundedness, a situation which is reminiscent of the beating phenomenon arises, as we briefly comment in the next subsection.

\subsection{Numerical results}
We first performed an experiment for problem \eqref{cdbeampositive}-\eqref{potential+} and obtained the plots in Figure \ref{plots12}.
\begin{figure}[!h]
\!\!\!\!\!\includegraphics[scale=0.45]{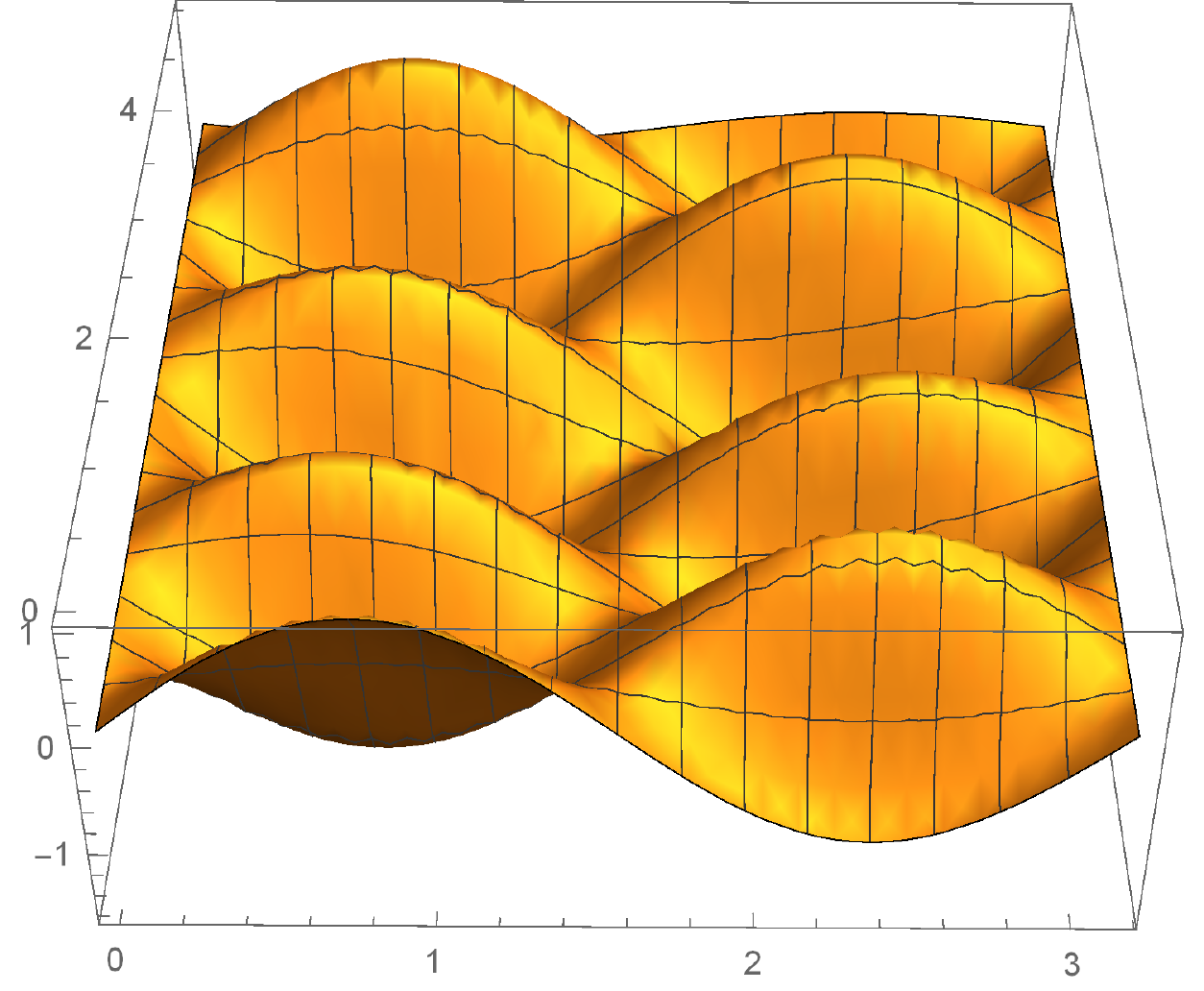}\qquad\quad\includegraphics[scale=0.35]{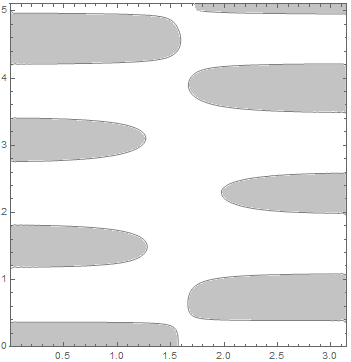}
\caption{For $\mu=3$ and $t\in(0,5)$, graph of the solution of \eq{cdbeampositive}-\eq{potential+} and of its positivity regions (gray).}
\label{plots12}
\end{figure}
\begin{figure}[!h]
\includegraphics[scale=0.35]{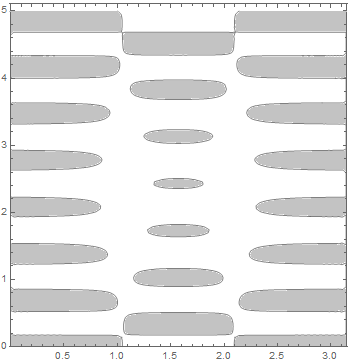}\qquad\qquad\qquad\includegraphics[scale=0.35]{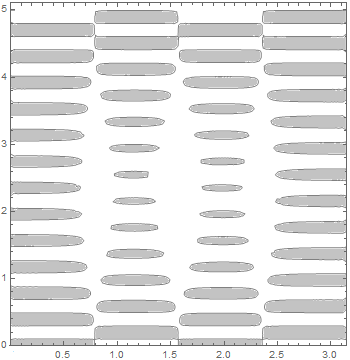}
\caption{For $\mu=3$ and $t\in(0,5)$, plot of the positivity regions of the solution of \eq{cdbeampositive} with initial data $u_0(x)=\sin (3x)$ and $u_0(x)=\sin (4x)$, respectively, and $u_1(x)=0$.}
\label{plots34}
\end{figure}
In the left picture the solution appears ``almost'' of the kind $u(x,t)\approx\phi_2(t)\sin(2x)$ but the positivity regions in the right picture
show that this is not the case, since there are values of $t>0$ where $u(x,t)<0$ for all $x\in(0,\pi)$: for those $t$ it is clear that the first mode $\sin(x)$
is dominant. Moreover, the negativity (white) region is larger, which means that mostly the force is not acting and the hangers are slacken, in line with the observation at the Tacoma Bridge, see \cite[V-12]{ammann}; a kind of periodic pattern can be observed as well. Actually, this feature of the first mode is independent of the prevailing mode: in Figure \ref{plots34} we show the picture when the prevailing modes are the third and the fourth one, with $u_0(x)=\sin (3x)$ and $u_0(x)=\sin (4x)$, respectively. Notice that Proposition \ref{homogeneity} allows us to deduce that the portrait is the same for any positive multiplicative constant in front of $u_0(x)$.
\par
In general, in our experiments we could not detect any form of instability for the autonomous problem, as if the half-linearity of the equation prevented the solution to display these effects: the growth of all the modes appears regular and of ``linear'' type.
\par
For the forced problem, in connection with the previous discussion about the Fu\v{c}ik spectrum, in Figure \ref{forzato1}
we show the solution of \eqref{finiteg} with $f(u)=3u^+$ and $N=5$, taking $g(x, t)=50\sin (x)\sin(\tfrac{4}{3} t)$, $\varphi_1^5(0)=0.01$, $\varphi_n^5(0)=0.00996$ for $n=2, 3, 4, 5$ and $\dot{\varphi}_n^5(0)=0$ for $n=1, \ldots, 5$. This choice is motivated by the behaviour highlighted by the numerical simulations for the scalar differential equation $\ddot{y} + y + 3 y^+ = 0$, where, setting $p(t)=\alpha \sin(\tfrac{4}{3}t)$, $\alpha \in \R$, the solution becomes unbounded. This seems to be the case also in Figure \ref{forzato1}, where we plot only the first three Fourier components since the other ones oscillate with semi-amplitude of about $0.01$ without evident changes in their behavior. A real structure may even collapse due to the excessive amplitude of the oscillations of the first mode, but still the solution of \eqref{finiteg} would be stable according to Definition \ref{finitestability}. Hence, we remark that in our setting instability is not equivalent to structural failures in the system, but is a clue of possible failures \emph{due to a switch in the kind of oscillations} of the solution.
\par
\begin{figure}[!ht]
\center
\includegraphics[scale=0.42]{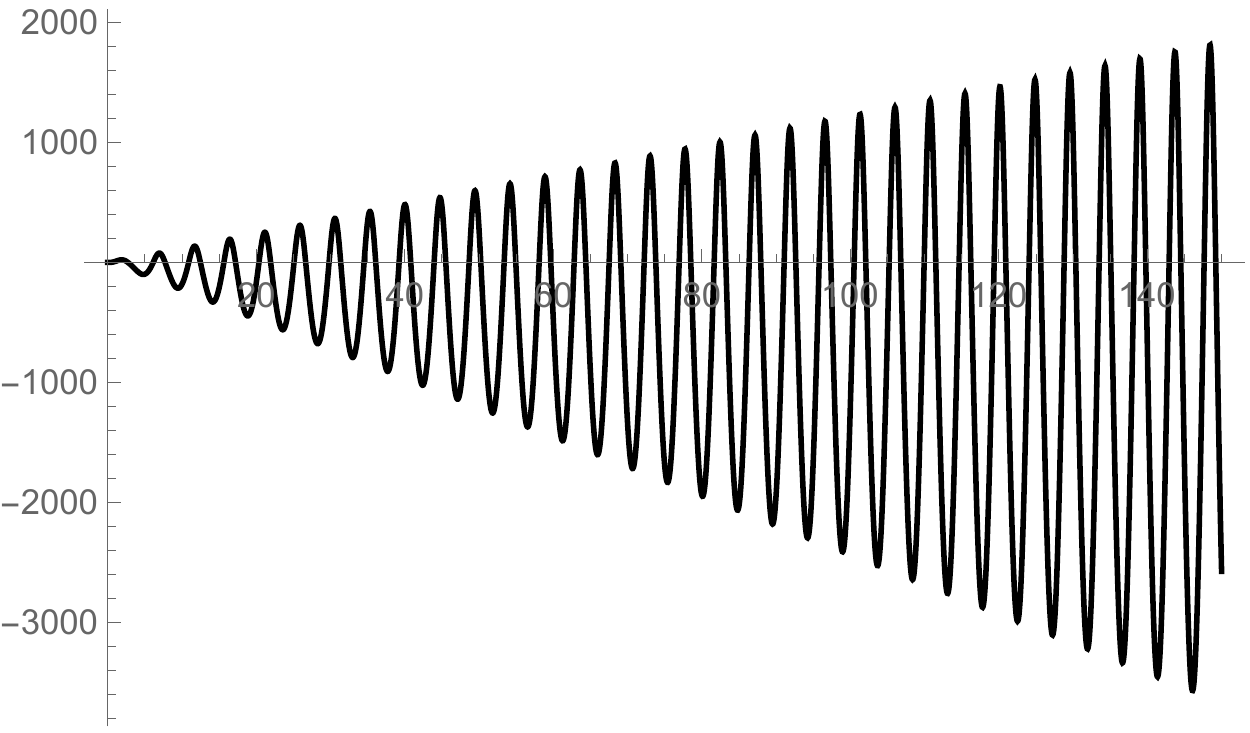}
\quad
\includegraphics[scale=0.42]{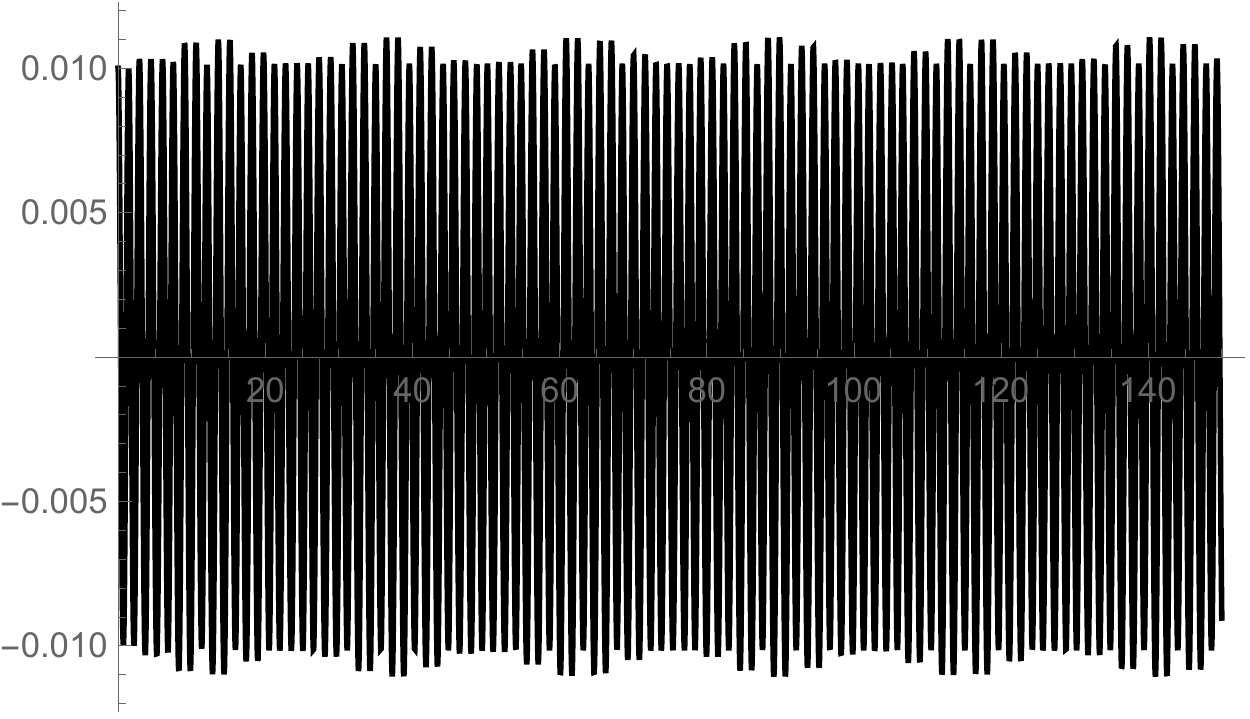}
\quad
\includegraphics[scale=0.42]{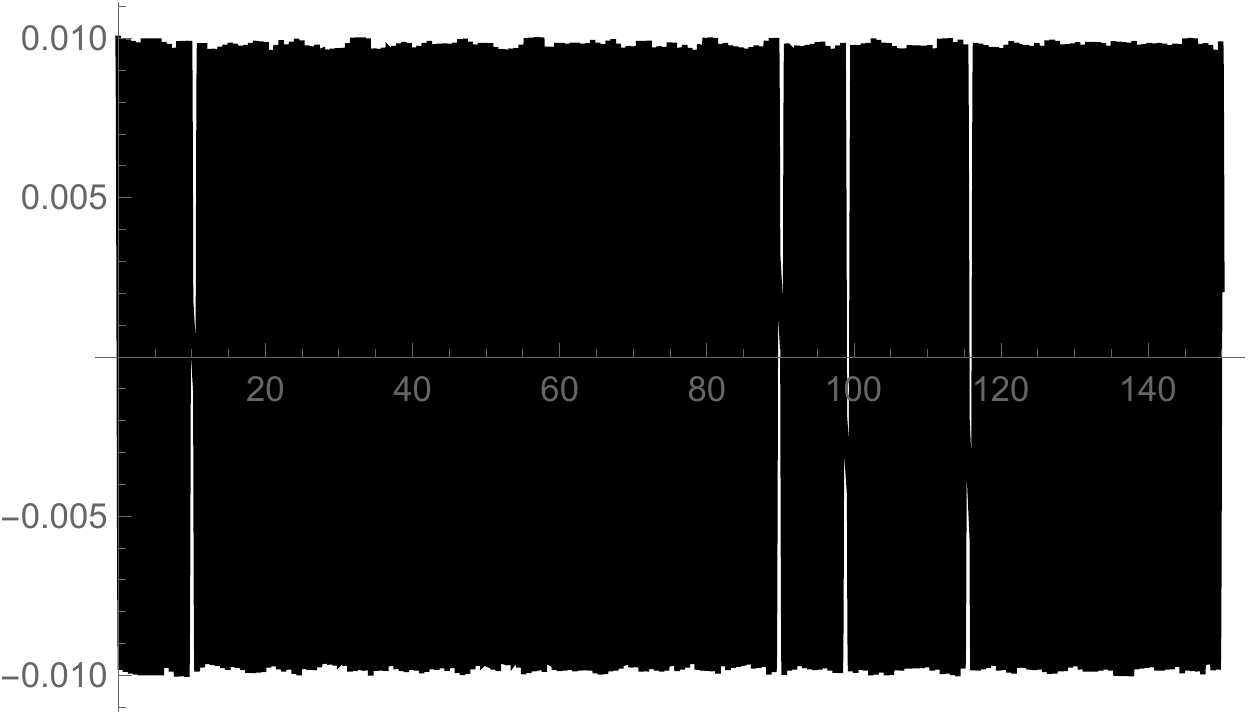}
\caption{The plots of $\varphi_1^5, \varphi_2^5, \varphi_3^5$ for problem \eqref{finiteg}, with $f(u)=3u^+$ and $g(x, t)=50\sin (x) \sin(\tfrac{4}{3} t)$, on the time interval $[0, T]=[0, 150]$.}
\label{forzato1}
\end{figure}
One then wonders if the same phenomenon may be detected with other prevailing modes $j \neq 1$, naturally leading to compare with the behaviour of the equation
\begin{equation}\label{asimmet}
\ddot\varphi_j(t) + (j^4+3) \varphi_j^+(t) - j^4 \varphi_j^-(t)= \alpha \sin (\gamma t).
\end{equation}
Here the picture is different from Theorem \ref{forz} since, in view of Theorem \ref{highermodes}, the first mode immediately absorbs energy from the second and increases its amplitude. The resulting scenario seems to give rise to bounded solutions presenting some patterns of ``growing-fainting'' oscillations, independently of the forced mode and of the amplitude of the forcing. We depict in Figure \ref{forzato2} the situation for the second mode, this time with $g(x, t)=\sin (2x) \sin[8\sqrt{19}t/(4 + \sqrt{19})]$, $\varphi_2^5(0)=0.01$, $\varphi_n^5(0)=0.00996$ for $n=1, 3, 4, 5$ and $\dot{\varphi}_n^5(0)=0$ for every $n=1, \ldots, 5$; the solution is stable according to Definition \ref{finitestability} and appears periodic-like.
\par
The choice of these ``strange'' forcing terms is suggested by Theorem \ref{forz}: indeed, the characteristic period associated with equation \eqref{asimmet} is $\pi/{\sqrt{j^4+3}}+\pi/j^2$ and we choose (both in Figures \ref{forzato1} and \ref{forzato2}, for $j=1$ and $j=2$, respectively) a forcing term with the same period, namely a possibly ``resonant'' one according to Theorem \ref{forz}.  
However, in view of Theorem \ref{highermodes} the effect of the considered external force on the dynamics of the system is completely different according to whether $j=1$ or $j=2$. 
\begin{figure}[!ht]
\center
\includegraphics[scale=0.42]{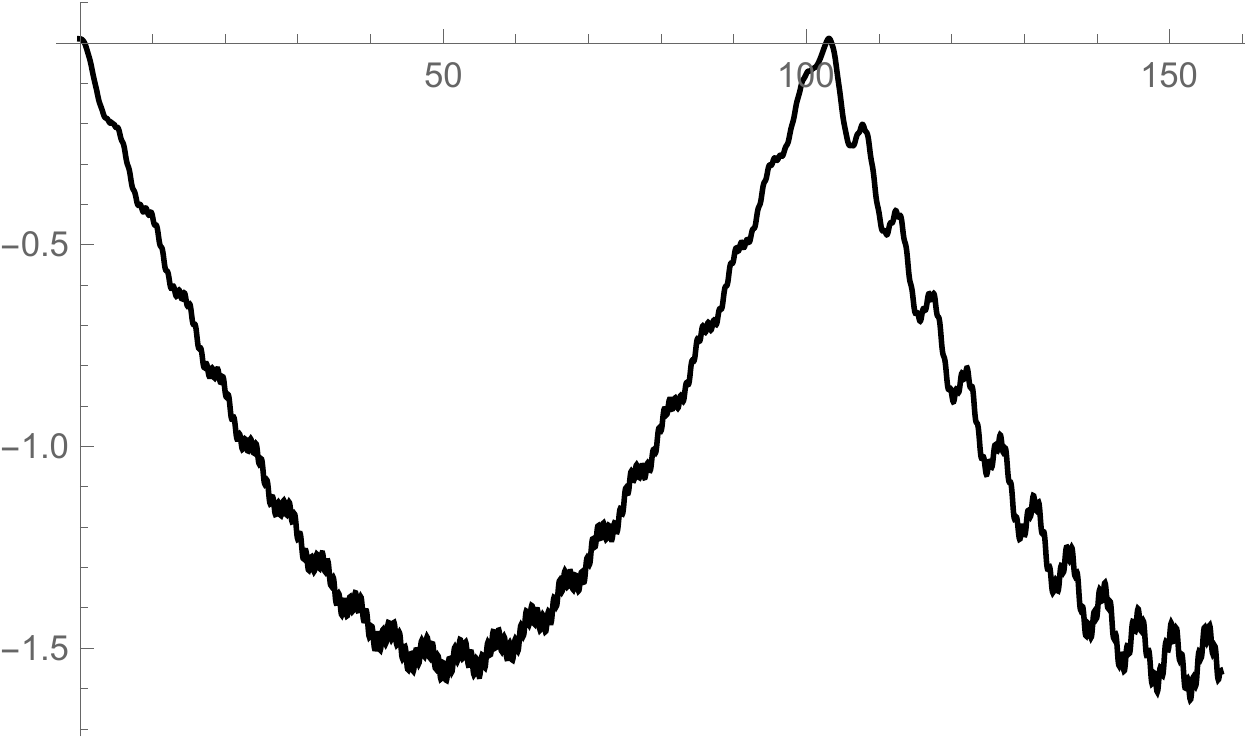}
\quad
\includegraphics[scale=0.42]{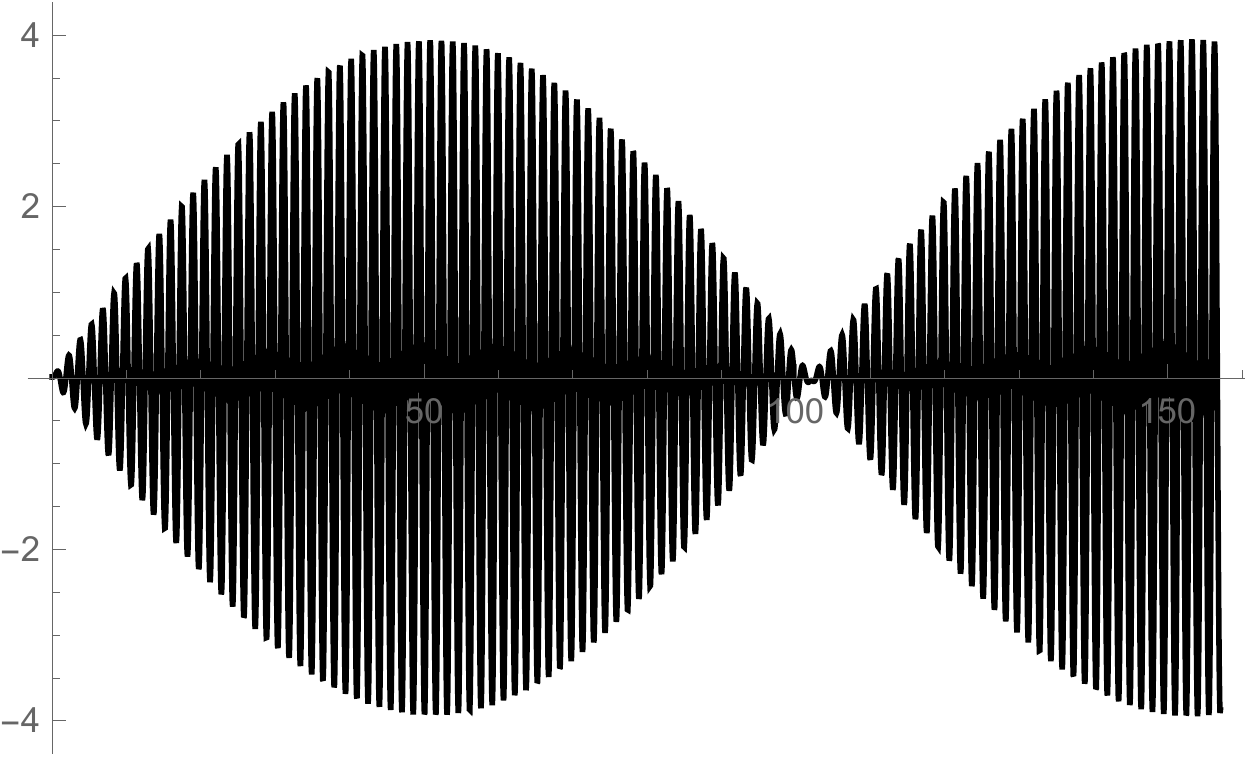}
\\
\vspace{0.3cm}
\includegraphics[scale=0.42]{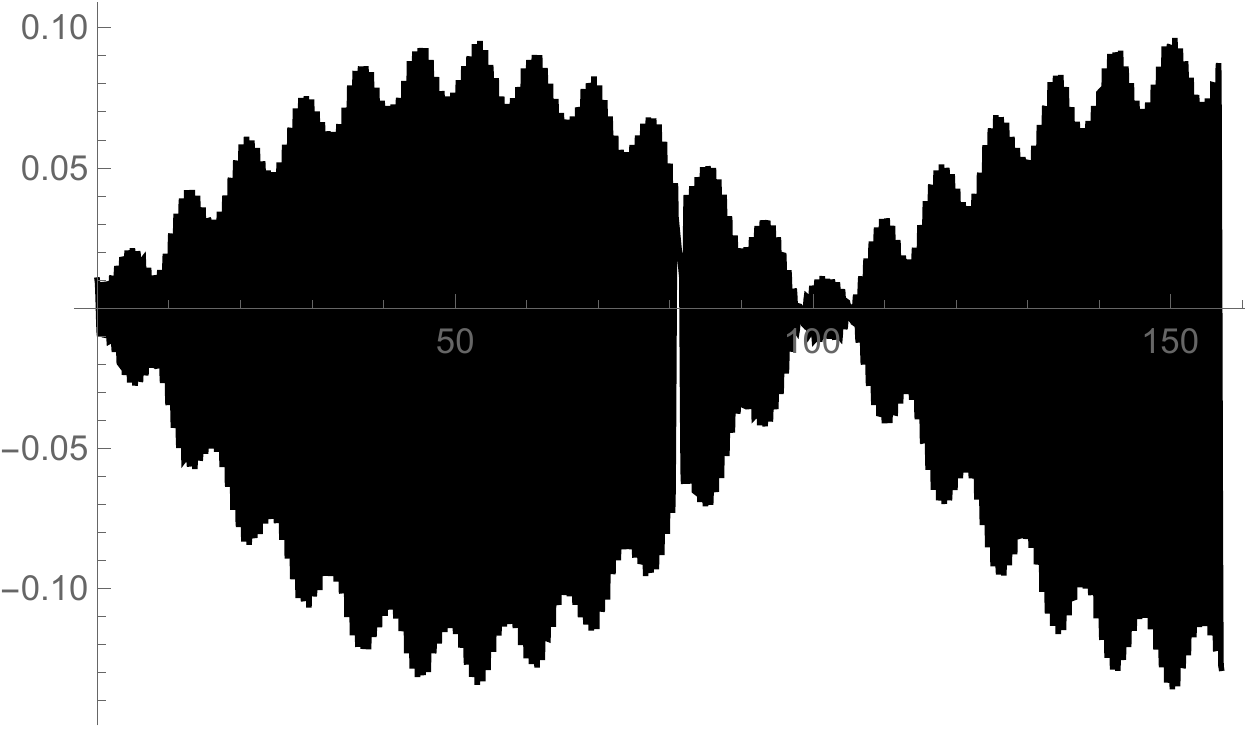}
\quad
\includegraphics[scale=0.42]{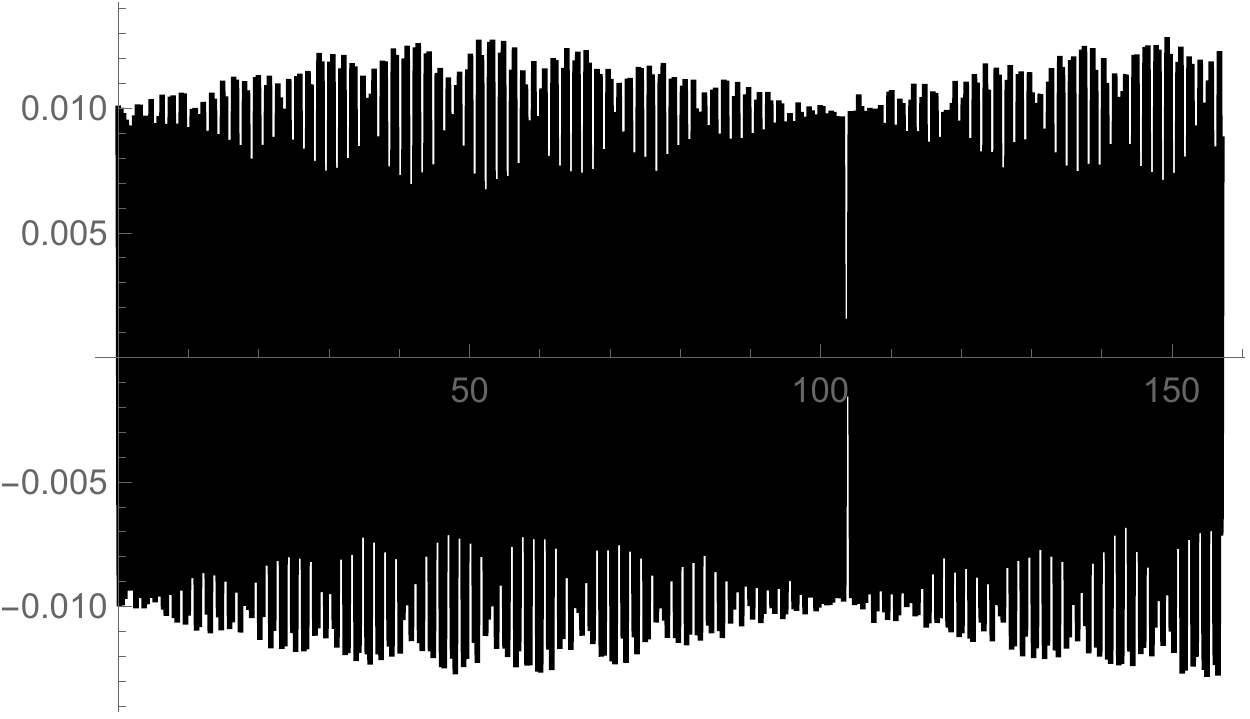}
\quad
\includegraphics[scale=0.42]{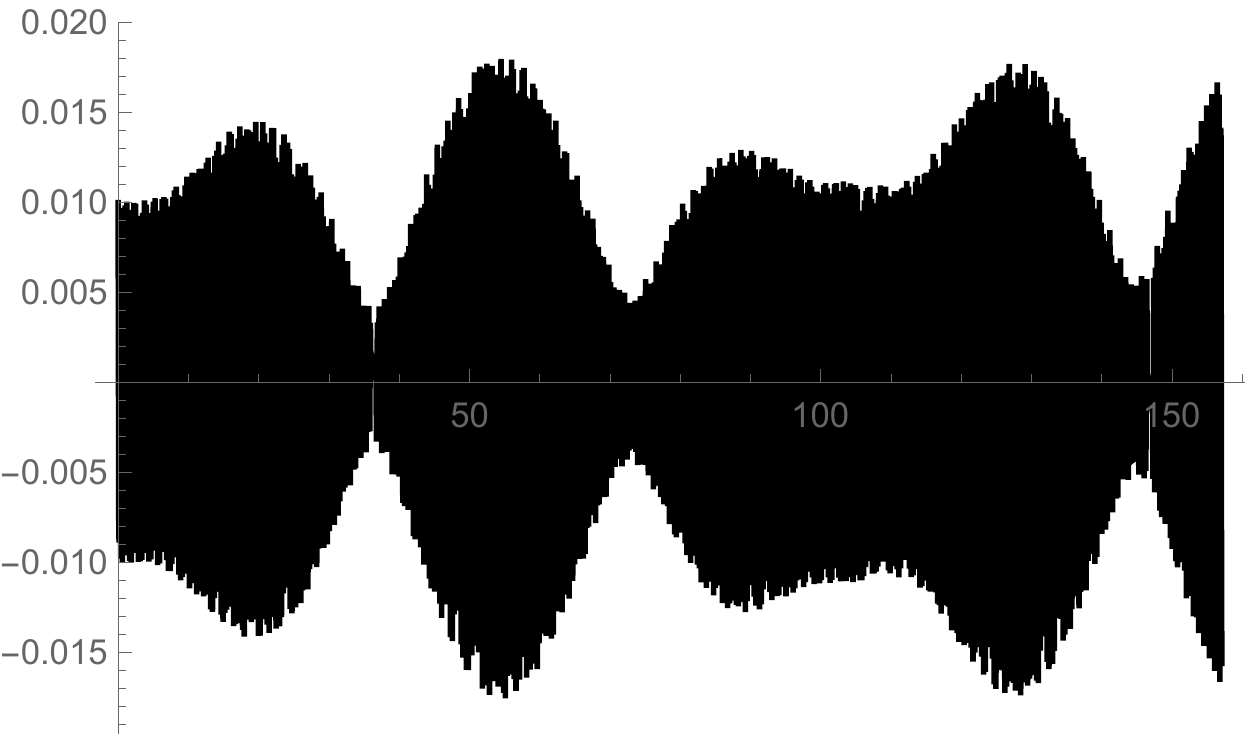}
\caption{The plots of $\varphi_1^5, \ldots, \varphi_5^5$ for problem \eqref{finiteg}, with $f(u)=3u^+$ and $g(x, t)=\sin (2x) \sin[8\sqrt{19}t/(4 + \sqrt{19})]$, on the time interval $[0, T]=[0, 160]$.}
\label{forzato2}
\end{figure}

As an example with a forcing term concentrated on the third mode, it may be seen numerically that a similar behaviour as the one displayed in Figure \ref{forzato2} arises for $g(x, t)=\sin (3x) \sin[18\sqrt{84}t/(9 + \sqrt{84})]$, with the only difference that the second mode remains small.
Again, instability seems not to occur in this context, also for a very large external force.

\subsection{An example of full proof of stability}

It is clear that, if one wishes to have high precision, then a large number $N$ of modes is needed; however, the computational cost of the numerical experiments for the non-smooth nonlinearity $f(u)=\mu u^+$ starts appearing relevant. Therefore, we choose $N=5$ as a compromise between highlighting the phenomena in full generality and maintaining the computational time acceptable (of the order of some hours). Nevertheless, this is enough to show a case of stability which fully fits in our theoretical framework. As a consequence of Theorem \ref{approximation} in the particular case when $f(u)=\mu u^+$, for each mode $n \leq 5$ we obtain the following estimate:
\begin{equation}\label{stimamodo2}
\Vert \varphi_n - \varphi_n^5  \Vert_{L^\infty(0, T)} \leq \sqrt{\frac{\mu+2}{\pi}}\frac{\sqrt{32 E(0)}+ \sqrt{32\pi} \frac{\alpha}{\gamma} \mathcal{I}(1+\frac{\gamma T}{\pi})}{36}\,
\frac{e^{\frac{\mu T}{2\sqrt{2(\mu+2)}}}-1}{\sqrt{n^4+\frac{1}{T^2}+\frac{\mu}{2}}} \qquad\forall0\le t\le T,
\end{equation}
where $\alpha, \gamma$ are as in \eqref{g} and we recall that $\mathcal{I}(\cdot)$ denotes the integer part.
As described in Section \ref{finitodim}, fixed $T > 2$ the strategy is 
to look first at the plots of the solution of the finite-dimensional system \eqref{finiteg}, verifying if \eqref{finitestable} is satisfied. If this is the case, we check if the right-hand side in \eqref{stimamodo2} is sufficiently small so as to enter the setting of Theorem \ref{stable}.
\par
We succeed in this plan for the choices $\eta=0.999$ (and hence $\eta^4 \approx 0.996$), $T=5$ and
\begin{equation}\label{positionsexp}
\mu=0.1, \; N=5, \; u_0(x)=10^{-3} \Big(0.996\sum_{n=1 \atop n \neq 2}^5 \sin (nx) + \sin(2x)\Big), \; u_1(x) \equiv 0, \; g(x, t)=5\cdot 10^{-3} \sin (2x) \sin (t).
\end{equation}
In this case, one has indeed  
$$
L(M) = 0.1, \quad C \approx 0.049, \quad E(0) \approx 7.7 \cdot 10^{-4}, \quad M \leq 0.065
$$
and from \eqref{stimamodo2} it is thus possible to infer that, for every $n \in \{1, 3, 4, 5\}$, it holds
$$
\Vert \varphi_n - \varphi_n^5 \Vert_\infty \leq 7 \cdot 10^{-4}.
$$
Taking into account the picture for the finite-dimensional approximation shown in Figure \ref{plotsS}, this bound 
allows to infer that the second condition in \eqref{finitestable} holds for every $n \in \{1, 3, 4, 5\}$. Hence, Theorem \ref{stable} yields the stability of the considered solution of \eqref{cdbeampositive} until time $T=5$. Once more, stability is meant in the sense of Definition \ref{unstable}.
\begin{figure}[!h]
\center
\includegraphics[scale=0.42]{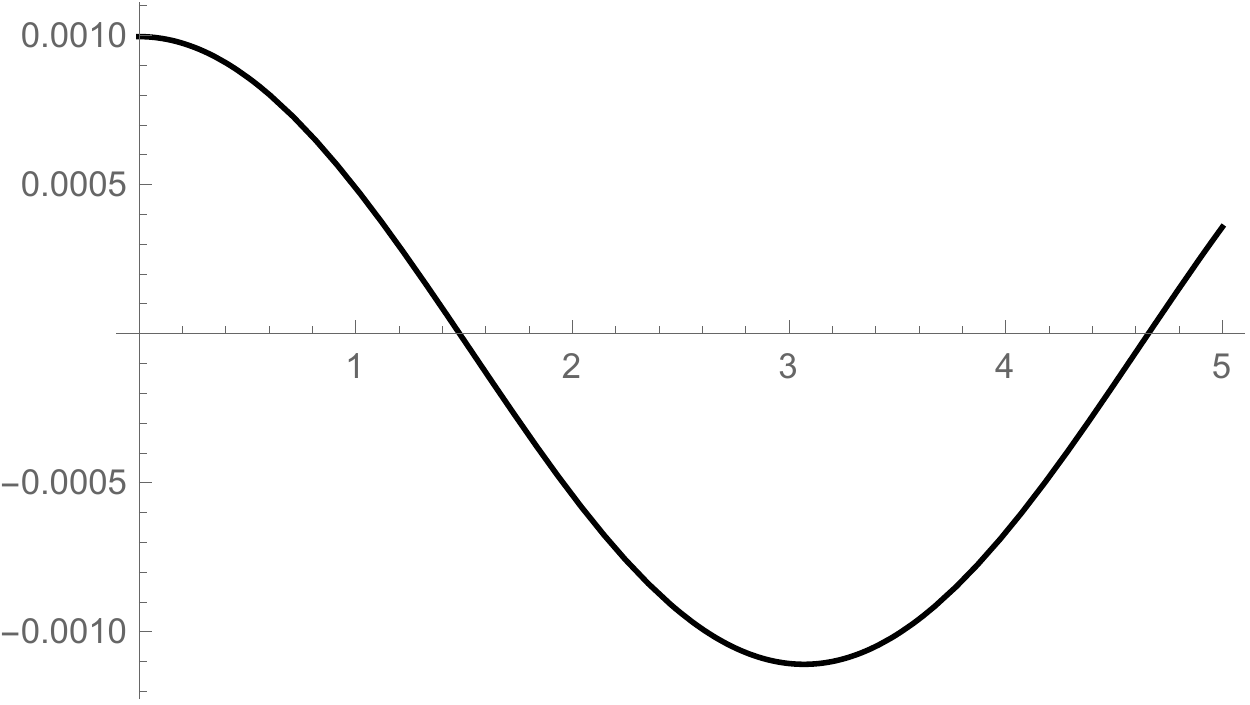}
\quad
\includegraphics[scale=0.42]{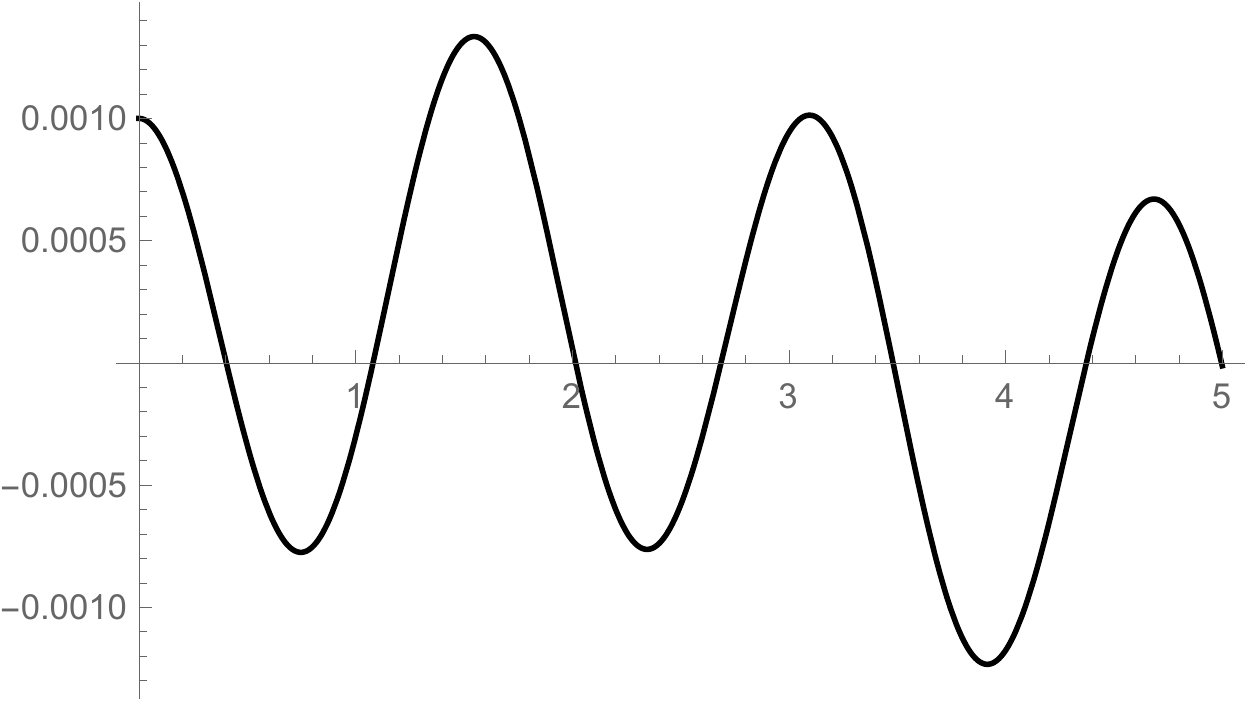}
\\
\vspace{0.3cm}
\includegraphics[scale=0.42]{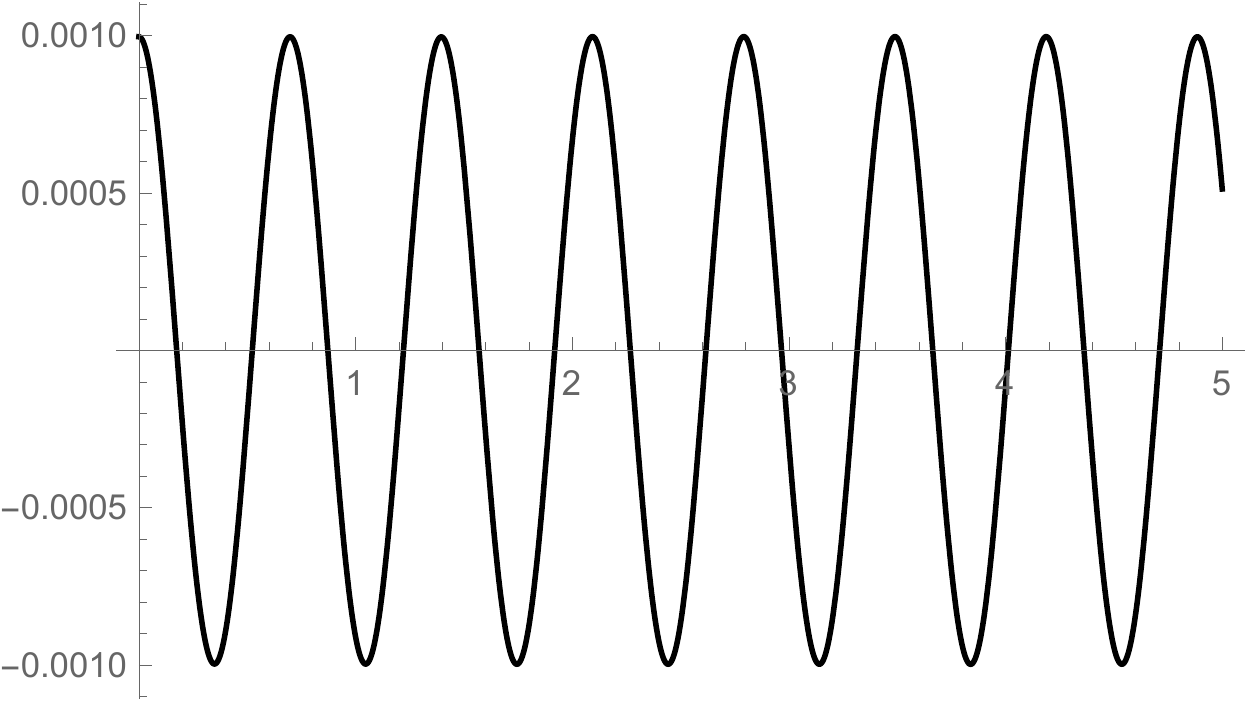}
\quad
\includegraphics[scale=0.42]{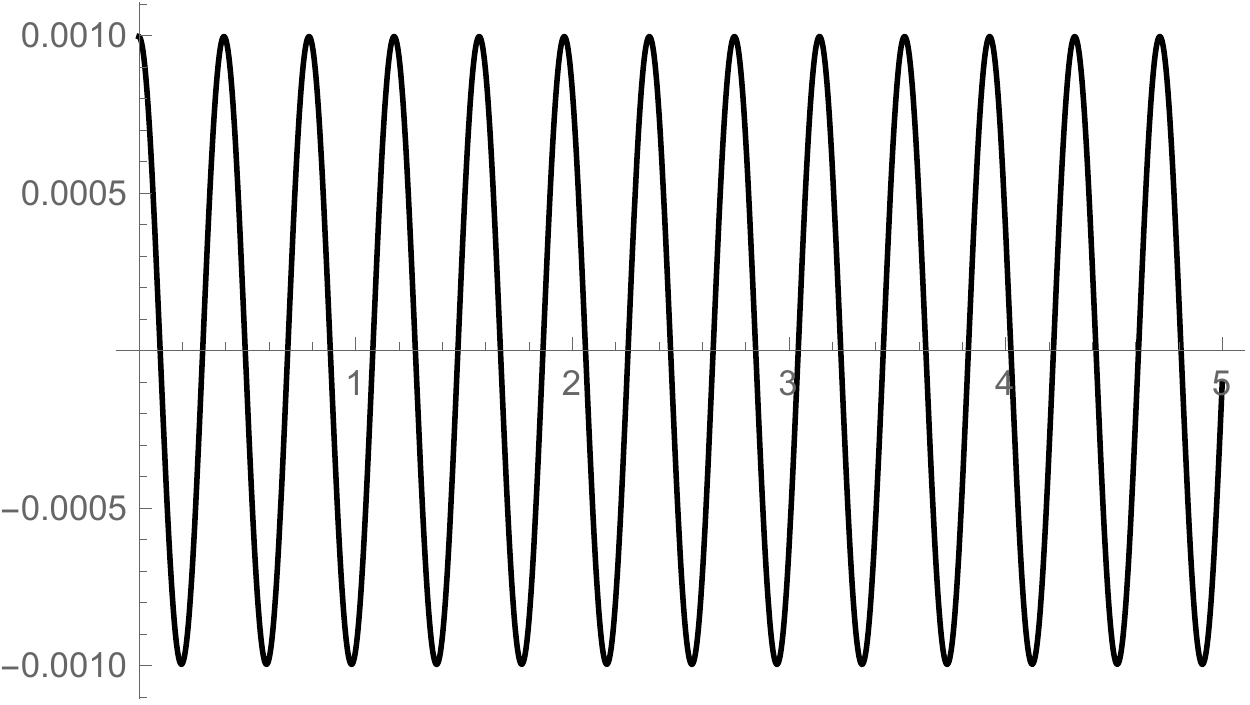}
\quad
\includegraphics[scale=0.42]{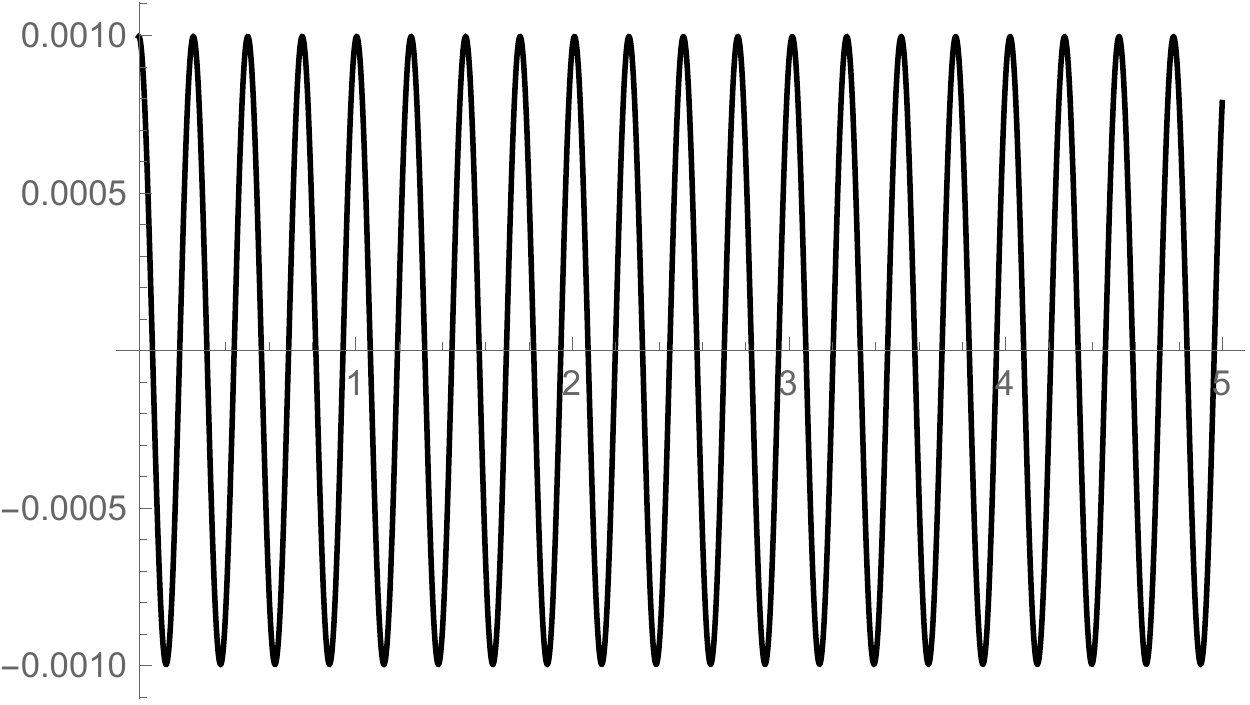}
\caption{The plots of $\varphi_1^5, \ldots, \varphi_5^5$ for problem \eqref{finiteg} -\eqref{positionsexp}, on the time interval $[0, 5]$.}
\label{plotsS}
\end{figure}

\section{The case of a cubic nonlinearity}\label{casecub}

A cubic nonlinearity naturally arises when large deflections of a plate or a beam (modeling a suspension bridge) are involved: in this case, the stretching effects suggest to use variants of the von K\'arm\'an theory, where cubic terms appear \cite{gazwang}.
Indeed, cubic nonlinearities have also been suggested to model the behaviour of the restoring force $f$, based on the following arguments.
First, the hangers may continue their action also after slackening; in this respect, Brownjohn \cite[p.1364]{brown} claims that {\em the hangers
are critical elements in a suspension bridge and for large-amplitude motion their behaviour is not well modelled by either simple on/off stiffness
or invariant connections}. Second, the nonlinearity may also take into account the nonlinear behaviour of the sustaining cables to which the hangers
are connected. Augusti-Sepe \cite{sepe1} view the restoring force at the endpoints of a cross-section of the deck as composed by two connected springs,
the top one representing the action of the sustaining cable and the bottom one (connected with the deck) representing the hangers. Moreover,
the action of the cables is considered the main cause of the nonlinearity of the restoring force by Bartoli-Spinelli \cite[p.180]{bartoli}, who
suggest quadratic and cubic perturbations of a linear behaviour.
In view of these remarks, Plaut-Davis \cite[$\S$ 3.5]{plautdavis} take $f(u)=\alpha_1u+\alpha_2u^3$,
for some $\alpha_1, \alpha_2>0$ depending on the elasticity of the cables and hangers. We neglect the linear term by taking $\alpha_1=0$ and we
study the transfer of energy for \eq{nonlinear} with the nonlinearity
$$
f(u)=u^3\, .
$$

\subsection{Structural properties of the equation}

In this section, we consider the problem
\neweq{cdbeamcubic}
\left\{\begin{array}{ll}
u_{tt}+u_{xxxx}+u^3=0\quad & \mbox{for }(x,t)\in(0,\pi)\times(0,\infty)\\
u(0,t)=u(\pi,t)=u_{xx}(0,t)=u_{xx}(\pi,t)=0\quad & \mbox{for }t\in(0,\infty)\\
u(x,0)=u_0(x)\, ,\quad u_t(x,0)=u_1(x)\quad & \mbox{for }x\in(0,\pi)\, .
\end{array}\right.\endeq
By seeking solutions of \eq{cdbeamcubic} in the form \eq{general}, system \eqref{infinite} reads
\neweq{firstattempt}
\ddot{\phi}_n(t)+n^4\phi_n(t)+\frac{2}{\pi}\int_0^\pi\Big(\sum_{m=1}^\infty\phi_m(t)\sin(mx)\Big)^3\sin(nx)\, dx=0
\endeq
and one has to compute the integral $I$ with the cubic power of the series. To this end,
let us rewrite the integral $I$ in \eq{firstattempt} as
\begin{eqnarray*}
I &=& \int_0^\pi\Big(\phi_n(t)\sin(nx)+\sum_{m\neq n}\phi_m(t)\sin(mx)\Big)^3\sin(nx)\, dx\\
\ &=& \phi_n(t)^3\int_0^\pi\sin^4(nx)\, dx+3\phi_n(t)^2\sum_{m\neq n}\phi_m(t)\int_0^\pi\sin^3(nx)\sin(mx)\, dx\\
\ &\ & +3\phi_n(t)\!\left[\sum_{m\neq n}\phi_m(t)^2\!\int_0^\pi\!\sin^2(nx)\sin^2(mx)dx+2\!\!\sum_{m,l\neq n \atop m>l}\!\phi_m(t)\phi_l(t)\!
\int_0^\pi\!\sin^2(nx)\sin(mx)\sin(lx)dx\right]\\
\ &\ & +\int_0^\pi\Big(\sum_{m\neq n}\phi_m(t)\sin(mx)\Big)^3\sin(nx)\, dx\, .
\end{eqnarray*}

We can make this slightly more explicit thanks to Lemma \ref{calculus} in the Appendix at the end of the paper; in this way, \eq{firstattempt} becomes
\neweq{secondattempt}
\ddot{\phi}_n(t)+A_n(t)+B_n(t)\phi_n(t)-\frac34 \phi_{3n}(t)\phi_n(t)^2+\frac34 \phi_n(t)^3=0
\endeq
where $A_n(t)$ is a homogeneous polynomial of degree 3 with respect to the Fourier components $\phi_m(t)$ (with $m\neq n$) and
$$
B_n(t)=n^4+\frac32\sum_{m\neq n}\Big(\phi_m(t)+\phi_{2n-m}(t)-\phi_{2n+m}(t)\Big)\phi_m(t)\, ;
$$
notice that $\sum_{m\neq n}\phi_{2n-m}(t)\phi_m(t)$ only contains a finite number of terms (possibly none), that is, the ones for which $m<2n$ (there
are none if $n=1$). We are not interested in the explicit expression of the coefficients of equation \eq{secondattempt}, but we performed these computations with the aim of emphasizing some interactions between the modes. The presence of coefficients such as $\phi_{2n+m}(t)$ or $\phi_{3n}(t)$ is clearly due to the specific nonlinearity $f(u)=u^3$ considered in \eq{cdbeamcubic}.\par
Let us now concentrate on the term $A_n(t)$, which does not depend on the unknown $\phi_n$ and therefore acts as a forcing term in \eq{secondattempt}.

\begin{definition}
Let $p, q, r, s\in\N$ be all different. We say that:\par
$\bullet$ the $q$-th mode influences the $p$-th mode if $A_p$ contains the term $\phi_q^3$;\par
$\bullet$ the couple of modes $(q, r)$ influences the $p$-th mode if $A_p$ contains one of the two terms $\phi_q^2\phi_r$ or $\phi_q\phi_r^2$;\par
$\bullet$ the triple of modes $(q, r, s)$ influences the $p$-th mode if $A_p$ contains the term $\phi_q\phi_r\phi_s$.
\end{definition}

Roughly speaking, if some modes influence the $p$-th mode, it may happen that $\varphi_p\not\equiv0$ even if
$(\phi_p(0),\dot{\phi}_p(0))=(0,0)$. The following result explains in which way the modes influence each other.

\begin{proposition}\label{influence}
The following statements hold true:\par
$\bullet$ if $p=3q$, then the $q$-th mode influences the $p$-th mode;\par
$\bullet$ if $q<r$ and $p\in\{2r+q,2r-q,r+2q,|r-2q|\}$, then the couple of modes $(q,r)$ influences the $p$-th mode;\par
$\bullet$ if $q<r<s$ and $p\in\{s+r+q,s+r-q,s-r+q,|s-r-q|\}$ then the triple of modes $(q,r,s)$ influences the $p$-th mode.
\end{proposition}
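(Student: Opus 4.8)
The plan is to identify, for each of the three types of monomial, its coefficient inside the ``forcing'' term $A_p$ of equation \eqref{secondattempt}, and to show that this coefficient fails to vanish precisely when $p$ ranges over the indicated set. From the computation leading to \eqref{secondattempt} (see also \eqref{firstattempt}) one has
\[
A_p(t)=\frac{2}{\pi}\int_0^\pi\Big(\sum_{m\neq p}\phi_m(t)\sin(mx)\Big)^3\sin(px)\,dx,
\]
so that, expanding the cube and collecting monomials, the coefficient of $\phi_q^3$, of $\phi_q^2\phi_r$, and of $\phi_q\phi_r\phi_s$ (with $q,r,s$ pairwise distinct) in $A_p$ equals, respectively,
\[
\frac{2}{\pi}\int_0^\pi\sin^3(qx)\sin(px)\,dx,\qquad
\frac{6}{\pi}\int_0^\pi\sin^2(qx)\sin(rx)\sin(px)\,dx,\qquad
\frac{12}{\pi}\int_0^\pi\sin(qx)\sin(rx)\sin(sx)\sin(px)\,dx,
\]
the numerical factors $1,3,6$ counting the ordered triples of indices that yield the corresponding multiset.

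The core of the argument is then the evaluation of $\int_0^\pi\sin(ax)\sin(bx)\sin(cx)\sin(px)\,dx$ for $a,b,c,p\in\N$; this is exactly the type of quantity furnished by Lemma \ref{calculus}, but one can also argue directly from the product-to-sum identity
\[
\sin(ax)\sin(bx)\sin(cx)=\tfrac14\Big[\sin\big((a+b-c)x\big)+\sin\big((a-b+c)x\big)-\sin\big((a+b+c)x\big)-\sin\big((a-b-c)x\big)\Big]
\]
together with $\int_0^\pi\sin(\omega x)\sin(px)\,dx=\tfrac{\pi}{2}\big(\delta_{\omega,p}-\delta_{\omega,-p}\big)$, valid for $\omega\in\Z$ and $p\in\N$. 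This exhibits the integral as a combination, with coefficients in $\{0,\pm\tfrac{\pi}{8},\pm\tfrac{\pi}{4}\}$, of Kronecker deltas supported on the four ``combination frequencies'' $a+b+c$, $|a+b-c|$, $|a-b+c|$, $|a-b-c|$.

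It then remains to specialize. For $\phi_q^3$ take $a=b=c=q$: the frequency set is $\{q,3q\}$, and since (by the definition of influence) $p$ is required to differ from $q$, only $p=3q$ survives, which is the first bullet. For $\phi_q^2\phi_r$ take $a=b=q$, $c=r$, obtaining $\{r,\,2q+r,\,|2q-r|\}$; adjoining the contribution of the symmetric monomial $\phi_q\phi_r^2$ (take $a=q$, $b=c=r$, giving $\{q,\,2r+q,\,2r-q\}$) and discarding the excluded values $p\in\{q,r\}$, one is left exactly with $p\in\{2r+q,\,2r-q,\,r+2q,\,|r-2q|\}$, the second bullet. For $\phi_q\phi_r\phi_s$ take $(a,b,c)=(q,r,s)$ with $q<r<s$ and simplify the absolute values, using $|a-b-c|=s+r-q$, $|a-b+c|=s+q-r$ and $|a+b-c|=|s-r-q|$; this produces $\{s+r+q,\,s+r-q,\,s-r+q,\,|s-r-q|\}$, the third bullet.

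The one step that genuinely requires care — and the expected main obstacle — is ruling out internal cancellation: two of the four combination frequencies could in principle coincide, so that their Kronecker-delta contributions annihilate each other and the predicted influence disappears. I would check, in each of the three specializations above and under the standing hypotheses ($q,r,s\ge1$ pairwise distinct), that the relevant combination frequencies are pairwise distinct; any coincidence can be seen to force one index to be $0$ or two of them to coincide, both excluded. Consequently each surviving delta carries a definite non-zero coefficient, and since no other monomial of $A_p$ produces the same product, the corresponding coefficient of $A_p$ is indeed non-zero. The degenerate case $s=q+r$, where $|s-r-q|=0$ singles out a non-existent mode, is consistent with the statement (recall $0\notin\N$) and need only be mentioned. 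Apart from this elementary bookkeeping, the proof is a direct computation.
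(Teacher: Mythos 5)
Your proposal is correct and follows essentially the same route as the paper, which deduces the proposition directly from Lemma \ref{calculus} (itself proved via the product-to-sum formulas you invoke); your extra check that no two combination frequencies coincide without forcing a zero or repeated index is exactly the content implicitly packaged in that lemma's case distinctions. Nothing further is needed.
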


The proof can be easily deduced from Lemma \ref{calculus} in the Appendix. The statement of Proposition \ref{influence} is a consequence of the
considered nonlinearity $f(u)$: the fact that the $q$-th mode influences the $3q$-th mode depends on the fact that $f(u)$ is cubic.\par
We briefly comment on the meaning of Proposition \ref{influence}. In principle, a risk may arise from the truncation procedure, the one of confusing a physiological transfer of energy with instability. Explicitly, when the initial data are concentrated on a single mode, i.e., $u_0(x)= A\sin(jx)$ for some integer $j$ and $u_1(x)\equiv 0$, in view of Proposition \ref{influence} one sees no coupling and no physiological energy exchanges if $N<3j$: as in the analysis of a fish-bone model for suspension bridges \cite{bergaz}, the Galerkin system \eqref{finiteg} reduces to the single Duffing equation
$$
\ddot\varphi_j^N(t) + j^4 \varphi_j^N(t) + \frac{3}{4} (\varphi_j^N)^3(t) = \alpha \sin (\gamma t)\, ,\quad \varphi_j^N(0)=A\, ,\quad\dot\varphi_j^N(0)=0\, .
$$
On the contrary, as soon as the initial datum is slightly perturbed (for instance, considering $u_0(x) = A \sin (jx) + \delta \sum_{n \neq j} \sin (n x)$ for a small number $\delta$), one passes from a picture with only one nonzero component to a situation where all the Fourier components interact and oscillate (again by Proposition \ref{influence}), and this may be identified as instability. However, Definition \ref{unstable} rules out this risk, since the influences described in Proposition \ref{influence} are all observed almost instantly, so that the Wagner effect is not accentuated (nor delayed) and this behaviour can be classified as physiological.
According to Definition \ref{unstable}, these energy transfers do not affect the stability analysis.
\par
Proposition \ref{influence} can be ``iterated'' and, for instance, the $q$-th mode influences also the $9q$-th mode;
however, after a few iterations the effects are undergone by very high modes and again they take place after a very small amount of time, so that the above discussion remains valid also in this case.
\par
A straightforward consequence of Proposition \ref{influence} is that both the odd and the even modes are reluctant to an energy transfer, as stated
in the following result.

\begin{theorem}\label{evenoddcompleto}
If $\{a_n\}_{_n}\in\ell^2_4$, $\{b_n\}_{_n}\in\ell^2_2$, and
$$
u_0(x)=\sum_{n=0}^\infty a_n\sin\big((2n+1)x\big)\ ,\quad u_1(x)=\sum_{n=0}^\infty b_n\sin\big((2n+1)x\big)\,,
$$
then there exist functions $\phi_{2n+1}\in C^2(\R_+)$ such that the solution of \eqref{cdbeamcubic} is given by
$$
u(x,t)=\sum_{n=0}^\infty \phi_{2n+1}(t)\sin\big((2n+1)x\big)\, .
$$
Similarly, if
$$
u_0(x)=\sum_{n=0}^\infty a_n\sin\big(2nx\big)\ ,\quad u_1(x)=\sum_{n=0}^\infty b_n\sin\big(2nx\big)\,,
$$
then there exist functions $\phi_{2n}\in C^2(\R_+)$ such that the solution of \eqref{cdbeamcubic} is given by
$$
u(x,t)=\sum_{n=0}^\infty \phi_{2n}(t)\sin\big(2nx\big)\, .
$$
\end{theorem}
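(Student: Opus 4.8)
The plan is to mimic the proof of Theorem \ref{evenodd}, using the uniqueness statement in Theorem \ref{exist} together with the parity structure revealed by Proposition \ref{influence}. The point is that if the initial data are supported on odd modes (resp.\ even modes), then one can exhibit a solution whose Fourier expansion contains only odd (resp.\ even) modes, and uniqueness forces the true solution to be this one. Concretely, for the odd case I would argue by contradiction inside the Galerkin/ODE formulation: suppose $u(x,t)=\sum_{n=0}^\infty \phi_{2n+1}(t)\sin((2n+1)x)$ is postulated as a candidate solution, so that $\phi_{2n}\equiv 0$ for all $n$; I then need to check that such a candidate is consistent with system \eqref{infinite}, i.e.\ that the equations for the even indices are automatically satisfied (both sides vanish) and the equations for the odd indices form a closed system with a (locally, hence globally by Theorem \ref{exist}) unique $C^2$ solution matching the initial data.

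The first step is the \emph{closure} observation: the cubic nonlinearity maps odd-supported functions to odd-supported functions, and even-supported to even-supported. This is exactly the content of Proposition \ref{influence} read modulo $2$ — the influences $p=3q$, $p\in\{2r\pm q, r\pm 2q\}$, $p\in\{s+r+q,\dots,|s-r-q|\}$ all preserve the parity of the multiset $\{q\}$, $\{q,r\}$, $\{q,r,s\}$ (a sum or difference of three odd numbers is odd; of three even numbers is even). Equivalently, one can see it directly: for $u(x,t)$ skew-symmetric about $x=\pi/2$ (which is the characterization of odd-mode functions on $(0,\pi)$), $u^3$ is again skew-symmetric about $x=\pi/2$, hence $\int_0^\pi u^3\sin(nx)\,dx=0$ whenever $\sin(nx)$ is symmetric about $x=\pi/2$, i.e.\ whenever $n$ is even. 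Dually, for $u$ symmetric about $x=\pi/2$, $u^3$ is symmetric, and $\int_0^\pi u^3\sin(nx)\,dx=0$ for $n$ odd. So in both cases the projections of $u^3$ onto the ``forbidden'' modes vanish identically in $t$.

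The second step is to run the uniqueness argument. Given initial data supported on odd modes in $\ell^2_4\times\ell^2_2$, consider the reduced ODE system \eqref{infinite} restricted to odd $n$; by the symmetry computation above, plugging a purely-odd ansatz into the full system makes every even-indexed equation read $0=0$, so the reduced system is genuinely closed. By Theorem \ref{exist} the reduced system has a solution with the prescribed (odd) data, giving $\phi_{2n+1}\in C^2(\R_+)$, and setting $\phi_{2n}\equiv 0$ produces a function $u(x,t)=\sum_{n}\phi_{2n+1}(t)\sin((2n+1)x)$ that solves \eqref{cdbeamcubic} in the weak (indeed strong, if the data are more regular) sense with the given initial conditions. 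Uniqueness in Theorem \ref{exist} then identifies this with \emph{the} solution, proving $\phi_{2n}\equiv 0$. The even case is word-for-word the same with the roles of ``symmetric'' and ``skew-symmetric'' swapped — note $\sin(2nx)$ is itself skew-symmetric about $x=\pi/2$ but an even-supported function need not be either symmetric or skew-symmetric, so here the cleaner route is via Proposition \ref{influence} (parity of sums/differences) rather than via reflection symmetry.

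The main obstacle — really the only non-bookkeeping point — is justifying that the restricted ODE system is self-consistent as a well-posed problem: one must be sure that the truncated/reduced nonlinearity inherits the local Lipschitz structure needed to invoke Theorem \ref{exist} (or its ODE analogue), and that passing to the subspace of odd-indexed (resp.\ even-indexed) Fourier sequences is compatible with the functional-analytic setting $\ell^2_4$, $\ell^2_2$ used there. Since $f(u)=u^3$ is smooth and the relevant function spaces are closed under the projection onto odd (resp.\ even) modes — which corresponds to restricting to functions skew-symmetric (resp.\ with a definite behaviour) about $x=\pi/2$, a closed subspace of $H^4_*$, $H^2_*$, $L^2$ — this causes no real difficulty, but it is the step that deserves an explicit sentence. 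Everything else is the reflection-symmetry / parity-of-indices computation already packaged in Proposition \ref{influence} and Lemma \ref{calculus}.
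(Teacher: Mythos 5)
Your argument is essentially the paper's own proof, which is a one-liner: by Proposition \ref{influence} the influences preserve the parity of the indices, so the forbidden modes satisfy a homogeneous equation with zero data and vanish by uniqueness (Theorem \ref{exist}). The parity bookkeeping and the closure/well-posedness remarks are all fine.

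One correction to your reflection-symmetry aside: you have the labels interchanged. Since $\sin\big((2n+1)(\pi-x)\big)=\sin\big((2n+1)x\big)$ and $\sin\big(2n(\pi-x)\big)=-\sin(2nx)$, the odd modes are \emph{symmetric} about $x=\pi/2$ and the even modes are \emph{skew-symmetric} (consistent with \eqref{rule}). The conclusion you draw is unaffected, because a symmetric $u$ has symmetric $u^3$ whose integral against the skew-symmetric $\sin(2nx)$ vanishes, and dually for the even case. In particular, your worry that ``an even-supported function need not be either symmetric or skew-symmetric'' is unfounded: it is skew-symmetric about $\pi/2$, its cube is too, and so the reflection route works just as cleanly for the even case as the parity count in Proposition \ref{influence}.
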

\begin{proof}
The statement follows by noticing that, in view of Proposition \ref{influence}, a set of modes which have all the same parity may only influence modes
with the same parity, that is, the other modes solve an equation which has only the zero solution by uniqueness.
\end{proof}

We incidentally observe that also the modes which are multiples of a fixed integer form a closed set with respect to the energy exchange, see Proposition \ref{influence}.

\subsection{Numerical results}

We show some numerical experiments considering $N=12$ modes and $T=16$. We limit ourselves to the autonomous case, namely $g(x, t) \equiv 0$. Contrary to Section \ref{casepos}, where the half-linearity of the problem makes us conjecture that no instability phenomena occur,
the general rule for the cubic nonlinearity appears richer. In particular, it seems that each prevailing mode $j$ possesses a critical amplitude $M_j$ with the following properties:
\begin{itemize}
\item[-] if the initial value of the prevailing Fourier component $\varphi_j$ is below $M_j$, then all the residual components oscillate regularly around their initial datum, up to the possible exceptions represented by the modes which undergo a physiological transfer of energy (mainly $\varphi_{3j}$, if $3j \leq N$, and all the couples influenced by $\varphi_j, \varphi_{3j}$). However, these physiological exchanges occur almost instantly and generate regular oscillations;
\item[-] if the initial amplitude of $\varphi_j$ is larger than $M_j$, then there exists a residual mode which suddenly increases its amplitude in an uncontrolled way, displaying an energy transfer according to Definition \ref{unstable}.
\end{itemize}
Hence, a change of the behaviour of the residual components, with tendency to instability according to Definition \ref{unstable}, is likely to be observed for initial values of $\varphi_j$ around $M_j$, which represents an instability threshold for the mode $j$ before time $T$.
\par
We focus on the particular situation when the second mode is prevailing. In this case, it may be seen that $M_2 \approx 6.2$. Just to remark quantitatively the different behaviour below or above this threshold, in Table \ref{amplitude} we report the approximate maximum amplitude of the oscillations of the first 9 modes of the solution of \eqref{firstattempt}
with $u_1(x)=0$ and $u_0(x)$ as in the first column (up to an initial amplitude of 0.01 on the residual modes). We observe the differences taking the initial amplitude of the second component equal, respectively, to $0.5 M_2$ and $M_2$. 

\begin{table}[!ht]
\begin{center}
\begin{tabular}{|c|c|c|c|c|c|c|c|c|c|}
\hline
$u_0(x) \approx$ & $\|\phi_1\|_\infty$ & $\|\phi_2\|_\infty$ & $\|\phi_3\|_\infty$ & $\|\phi_4\|_\infty$ & $\|\phi_5\|_\infty$ & $\|\phi_6\|_\infty$ & $\|\phi_7\|_\infty$ & $\|\phi_8\|_\infty$ & $\|\phi_9\|_\infty$ \\
\hline
$3.1\sin(2x)$ & $0.012$ & $3.1$ & $0.011$ & $0.01$ & $0.01$  & $0.01$ & $0.01$  & $0.01$ & $0.01$ \\
\hline
$6.2\sin(2x)$ & $1.2$ & $6.2$ & $0.24$ & $0.052$ & $0.051$ & $0.09$  & $0.013$ & $0.01$ & $0.01$  \\
\hline
\end{tabular}
\vspace{0.2 cm}
\caption{For $t\in (0,16)$, approximate amplitude of oscillation of the first $9$ modes.}\label{amplitude}
\end{center}
\end{table}
The nonlinear response on dilating the amplitude of the initial datum is evident: doubling the initial datum of the prevailing component, the first and the third Fourier coefficients grow of two and one orders of magnitude, respectively. In the second row, we also notice the physiological transfer of energy on $\varphi_6$, which, as seen in Figures \ref{plots1cubo}-\ref{plots3cubo}, has a stronger (and instantaneous) $L^\infty$-growth than $\varphi_4, \varphi_5$, for instance.
\par
We now go one step further and show in details the transition from stability to instability with the corresponding pictures of the Fourier components. We take again
$$
u_0(x) = M \sin (2x) + \delta \sum_{n=1 \atop n\neq 2}^{12} \sin (nx), \quad M \gg \delta, \quad u_1 \equiv 0.
$$
Thus, the second mode is prevailing and we fix
$$
M \approx 6 \quad \textrm{ and } \quad \delta=0.01.
$$
We here note the striking change of picture for the autonomous Galerkin system passing from the stable situation $M=6$ (Figure \ref{plots1cubo}) to $M=6.1$ (Figure \ref{plots2cubo}), where the system ``prepares'' itself to display instability, until $M=6.2$ (Figure \ref{plots3cubo}), where instability appears. We conclude that $M_2 \approx 6.2$ is an instability threshold before time $T=16$. In the next three figures, we display this process in its entirety,  in particular:
\\
-- in Figure \ref{plots1cubo}, we see that the energy is immediately spread onto all the residual components, which start oscillating quite regularly around their initial value, with the exception of the $6$-th component. Indeed, according to Proposition \ref{influence}, $\varphi_6^{12}$ physiologically receives some energy from $\varphi_2^{12}$ and grows instantaneously of approximately eight times. The whole picture appears as a stability one;
\\
-- in Figure \ref{plots2cubo}, we observe an abrupt change in the oscillations of $\varphi_1^{12}$ (and, simultaneously, of $\varphi_3^{12}$, that is the component undergoing a physiological influence by $\varphi_1^{12}$), which starts growing with an exponential-type behaviour. Figure \ref{plots2cubo} describes again a stability situation, since the first condition in \eqref{finitegrande} is not fulfilled, but the picture has changed considerably with respect to Figure \ref{plots1cubo} and the first component is ``preparing'' itself to capture - in a non-physiological way - some energy from the second mode. The other residual components which grow do not reach significant amplitudes with respect to the prevailing mode. We underline that $\varphi_1^{12}$ has grown of $20$ times, with respect to Figure \ref{plots1cubo}, in correspondence of a small variation ($1.67 \% $) of the initial value of $\varphi_2^{12}$;
\\
-- in Figure \ref{plots3cubo}, the solution becomes unstable according to Definition \ref{finitestability}, since the first mode has captured enough energy in order to fulfill the two conditions in \eqref{finitegrande}. In comparison with Figure \ref{plots2cubo}, notice again that a small variation of the amplitude of the prevailing mode at the beginning ($1.64 \%$) leads to a gain of more than five times in the amplitude of the residual mode displaying instability.
\par
We here observe that the higher modes have a very large frequency but basically oscillate with the same amplitude as their initial datum, with no evidence of possibly modifying their behaviour in a neighbouring time interval; for the sake of brevity, in Figures \ref{plots2cubo} and \ref{plots3cubo} we will sometimes omit to depict all the modes, focusing on the most significant ones.
\newpage

\begin{figure}[!ht]
\center
\includegraphics[scale=0.31]{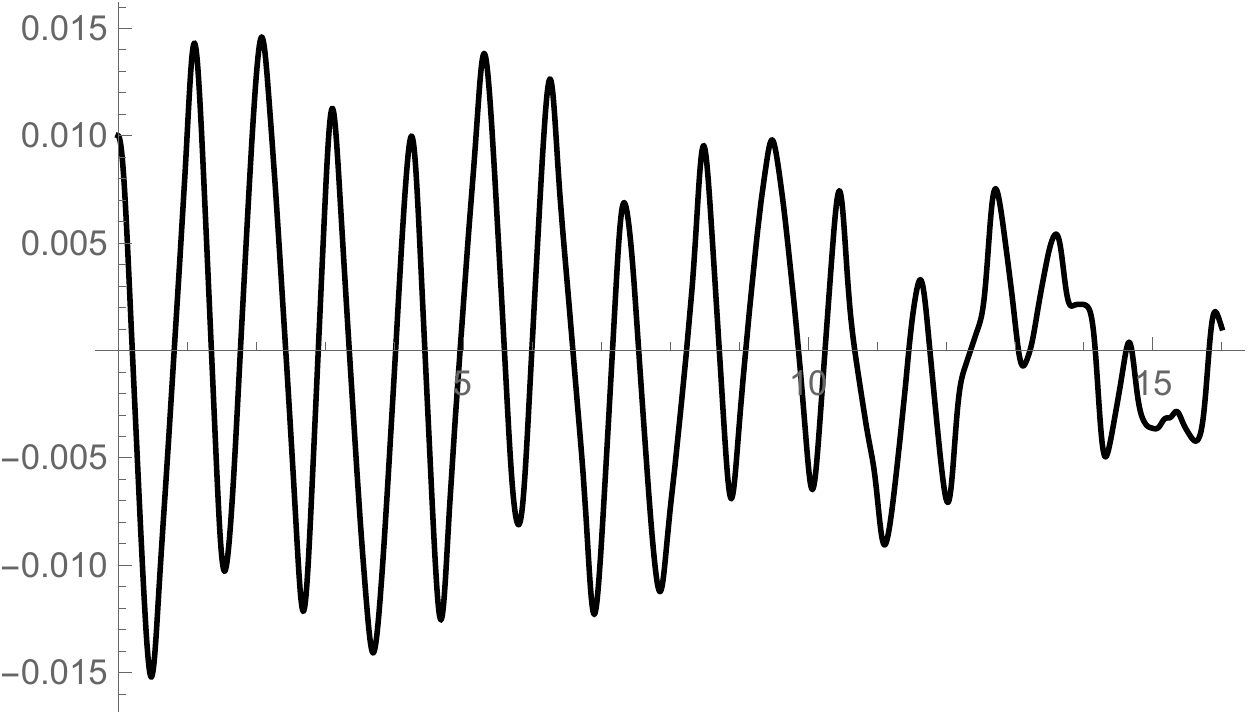}
\;
\includegraphics[scale=0.31]{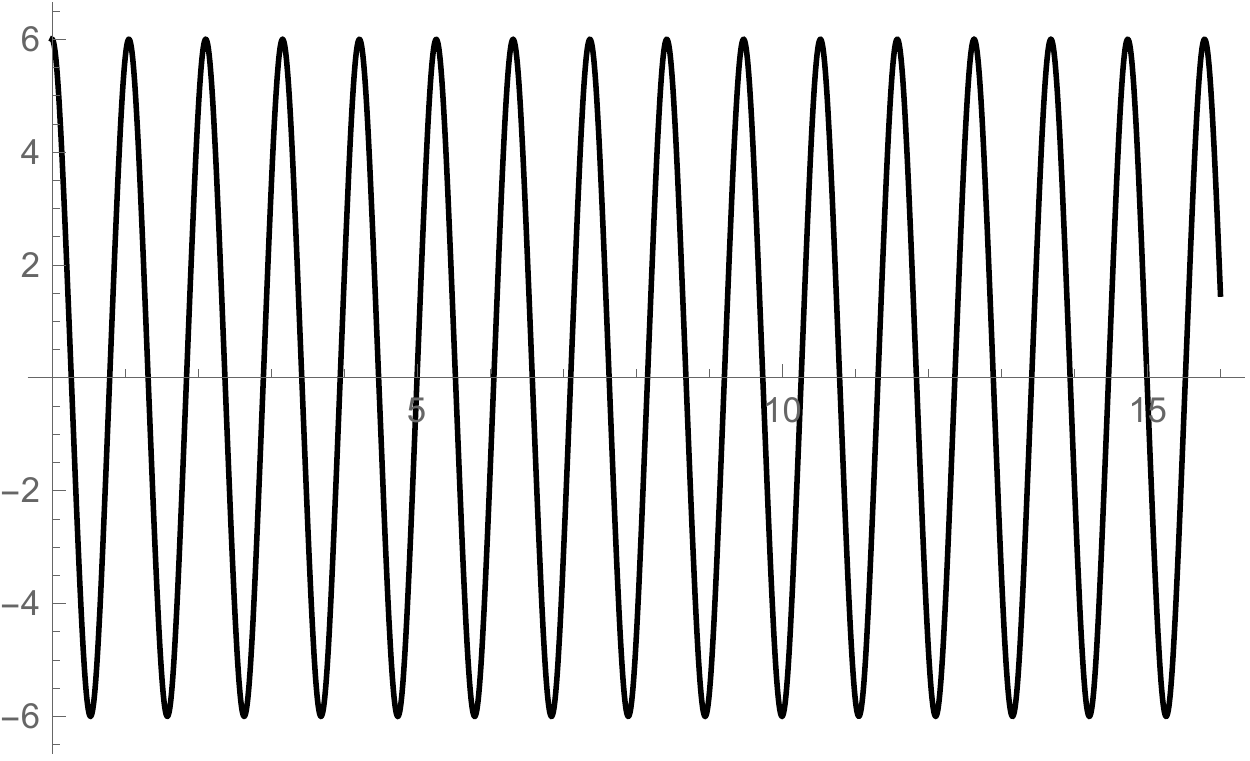}
\;
\includegraphics[scale=0.31]{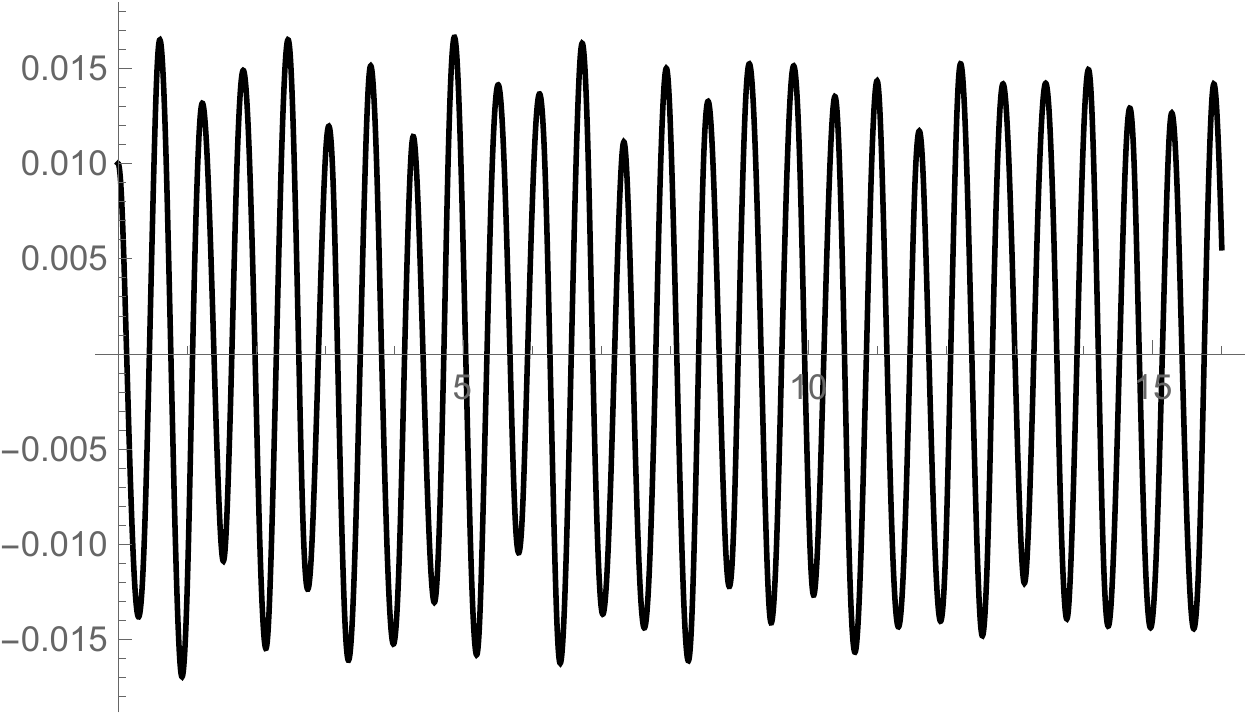}
\;
\includegraphics[scale=0.31]{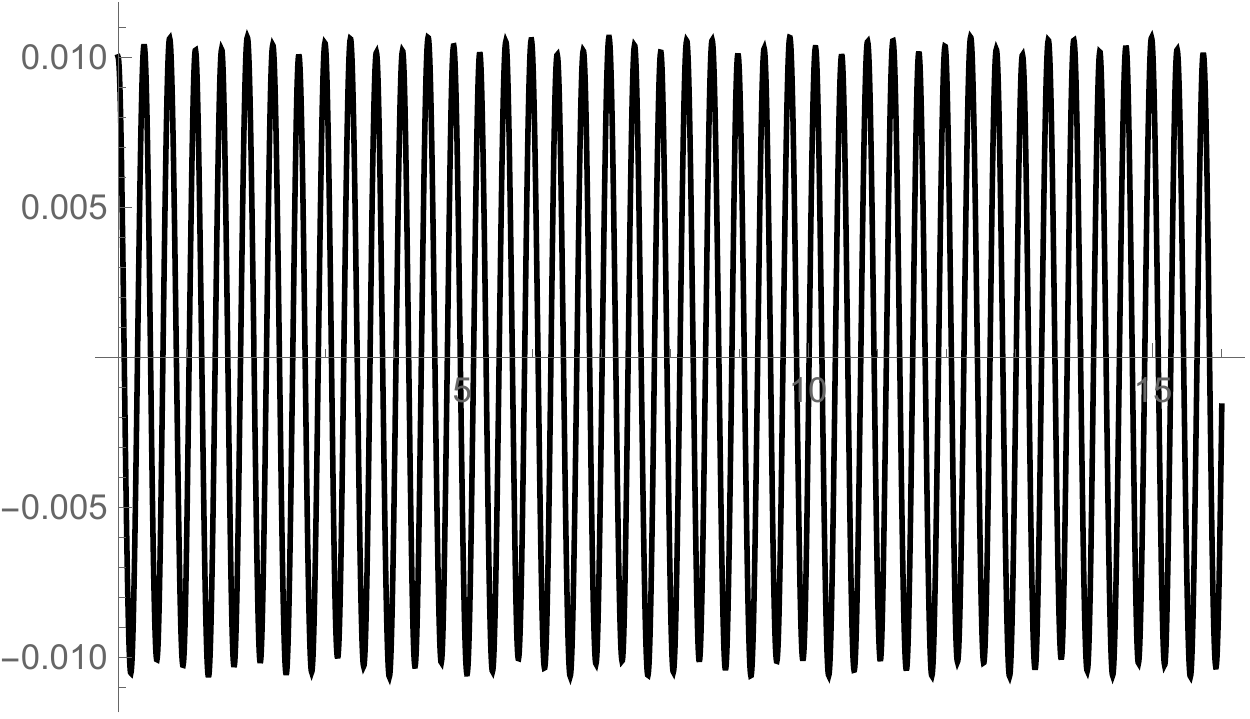}
\vspace{0.3cm}
\\
\includegraphics[scale=0.31]{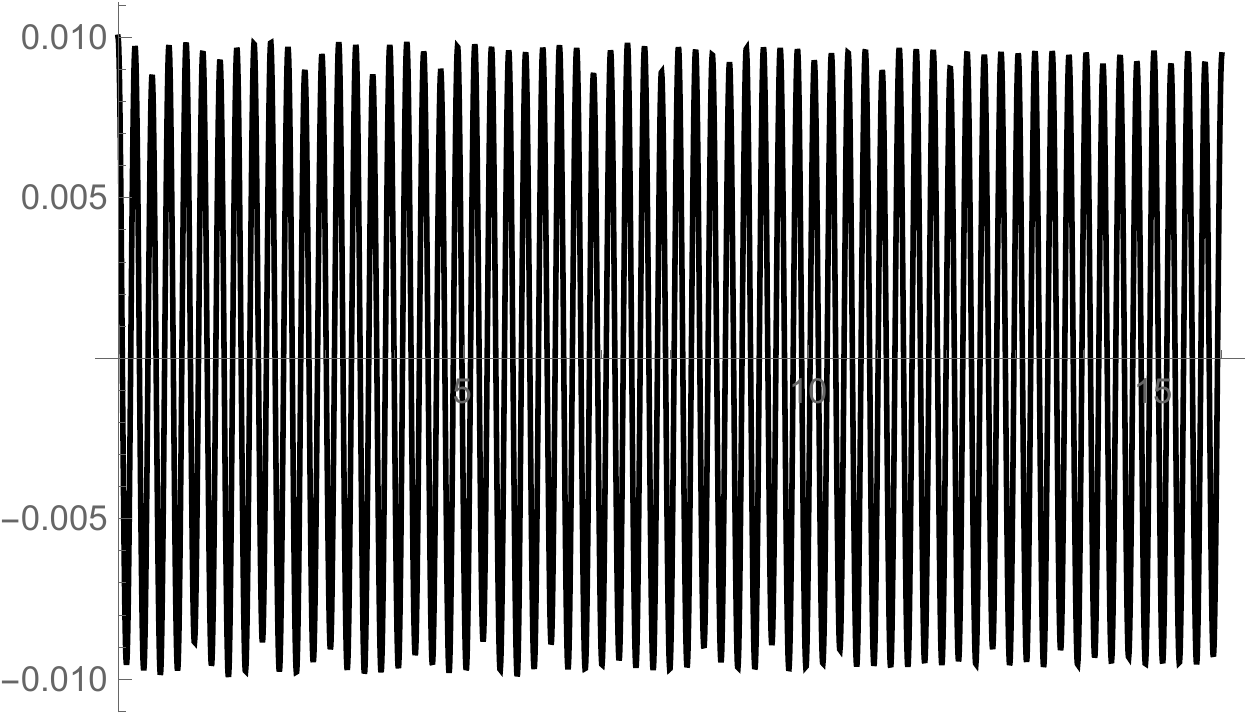}
\;
\includegraphics[scale=0.31]{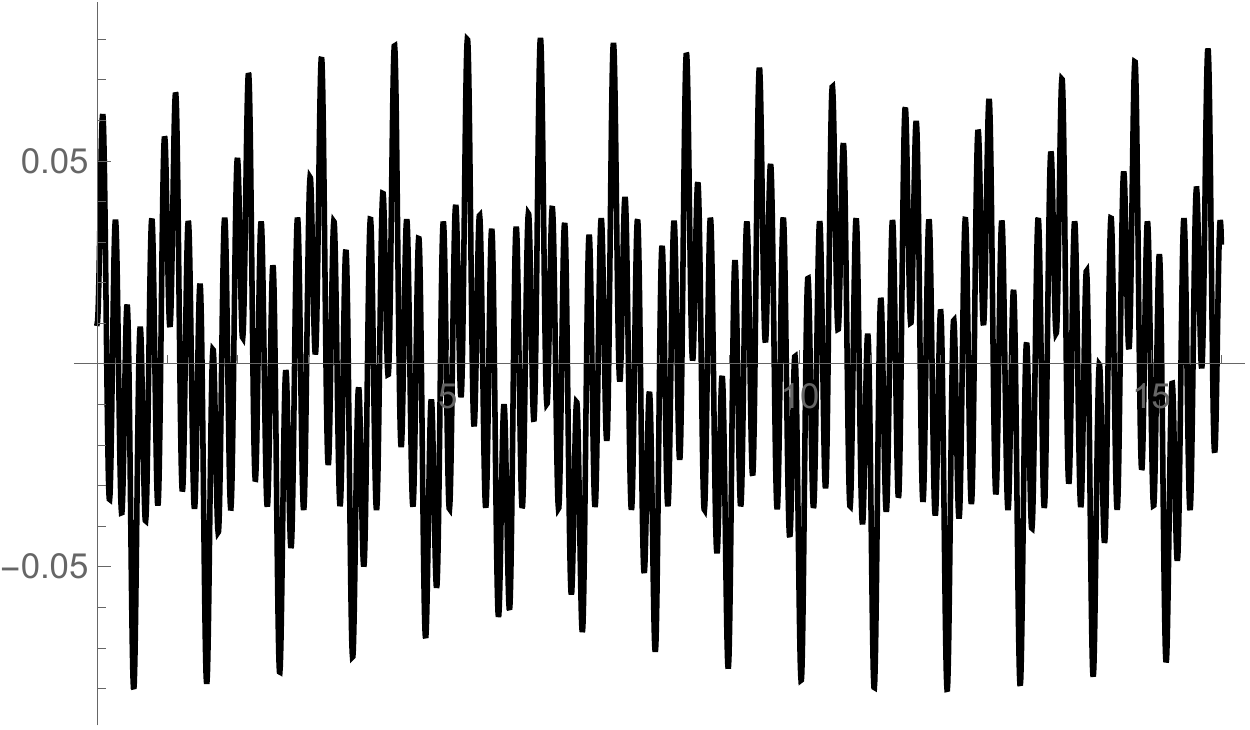}
\;
\includegraphics[scale=0.31]{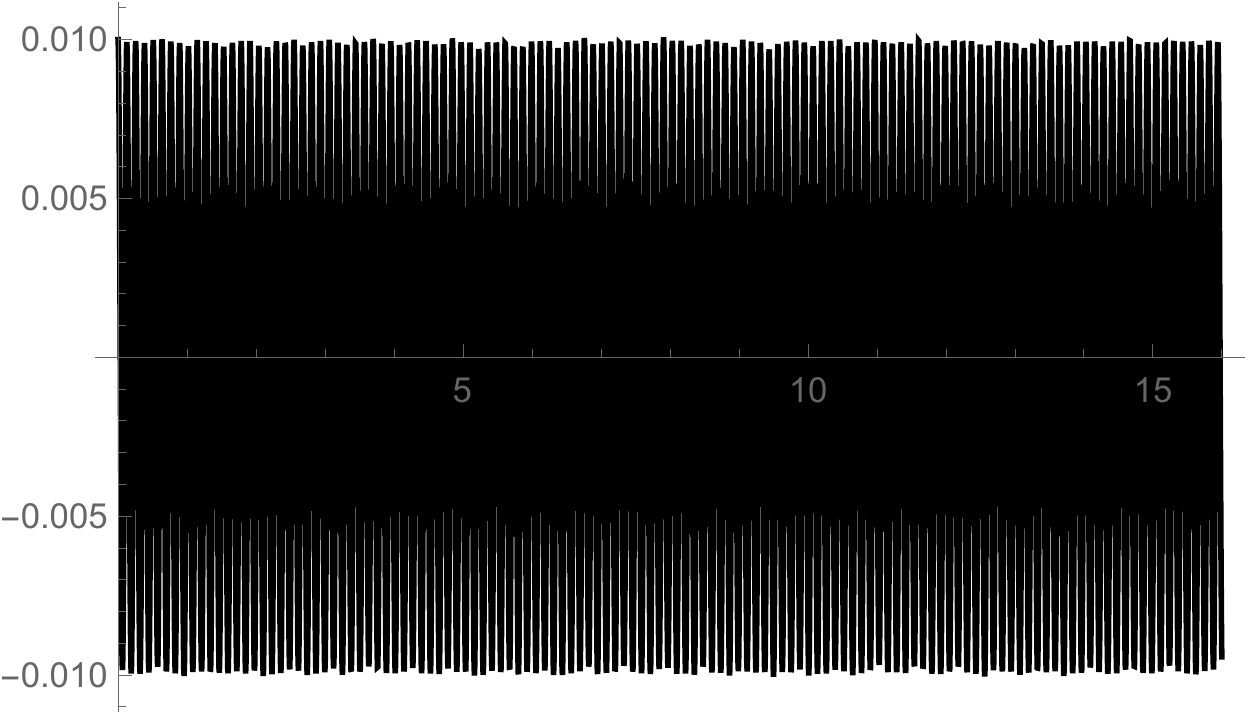}
\;
\includegraphics[scale=0.31]{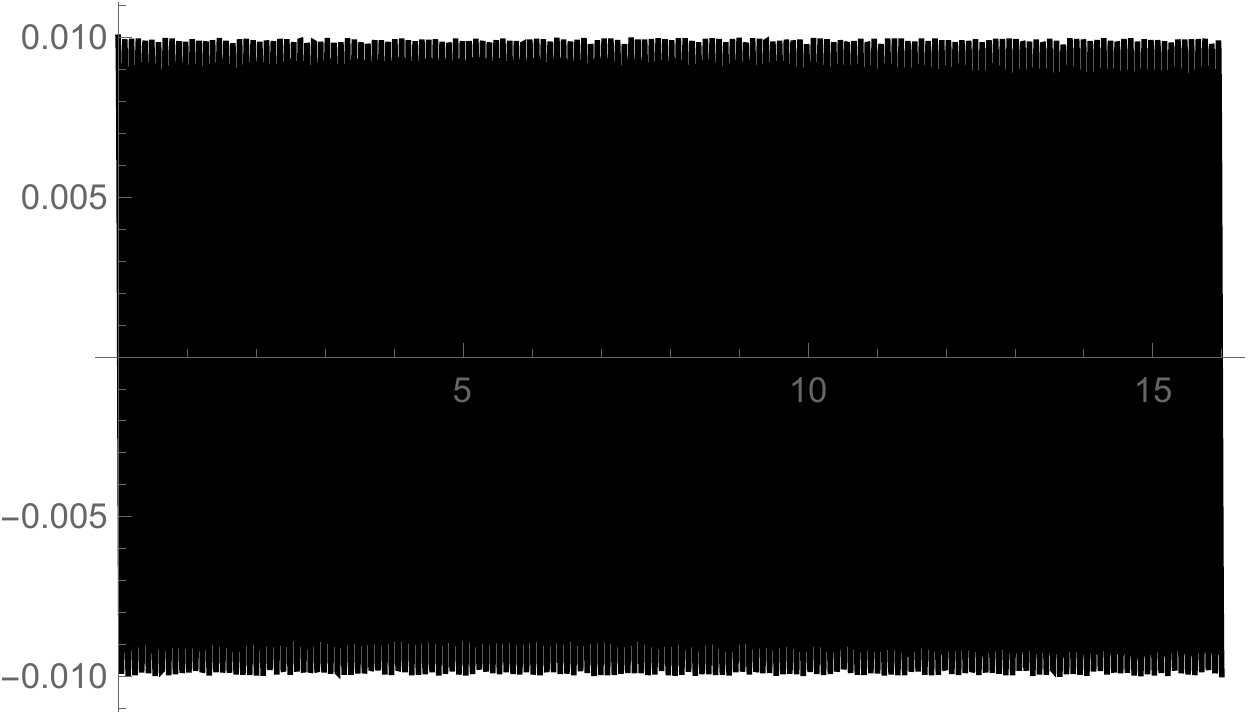}
\vspace{0.3cm}
\\
\includegraphics[scale=0.31]{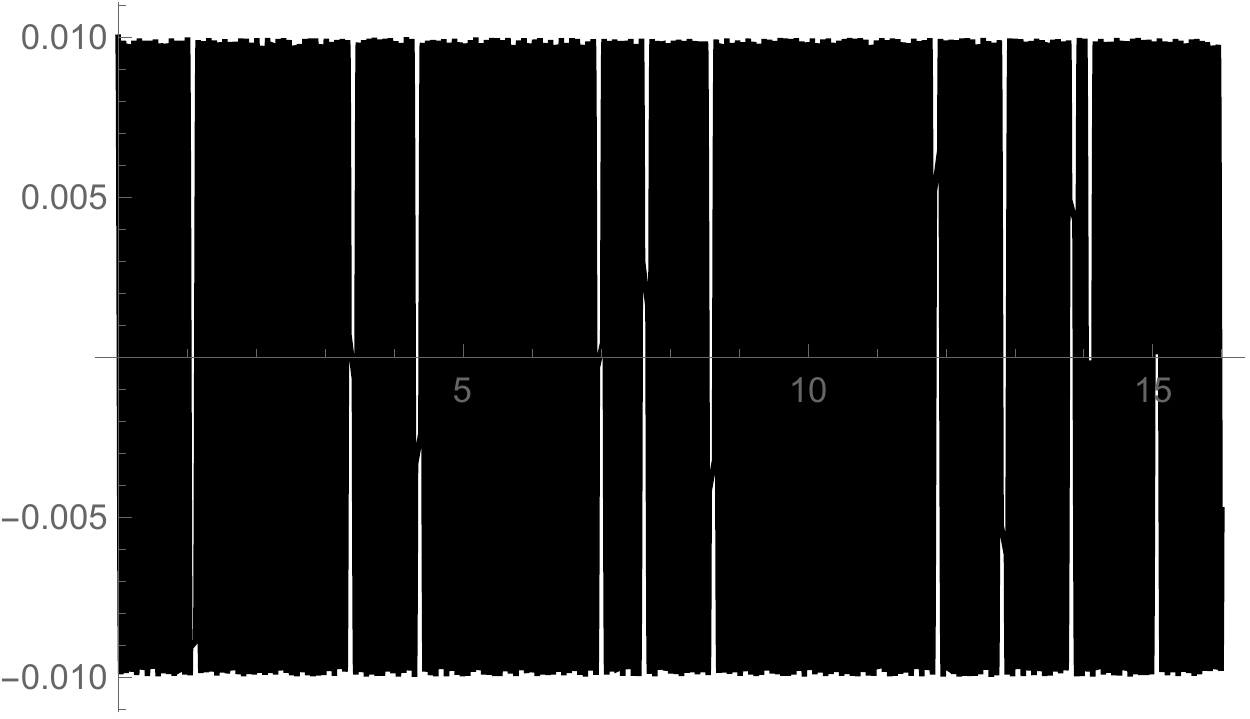}
\;
\includegraphics[scale=0.31]{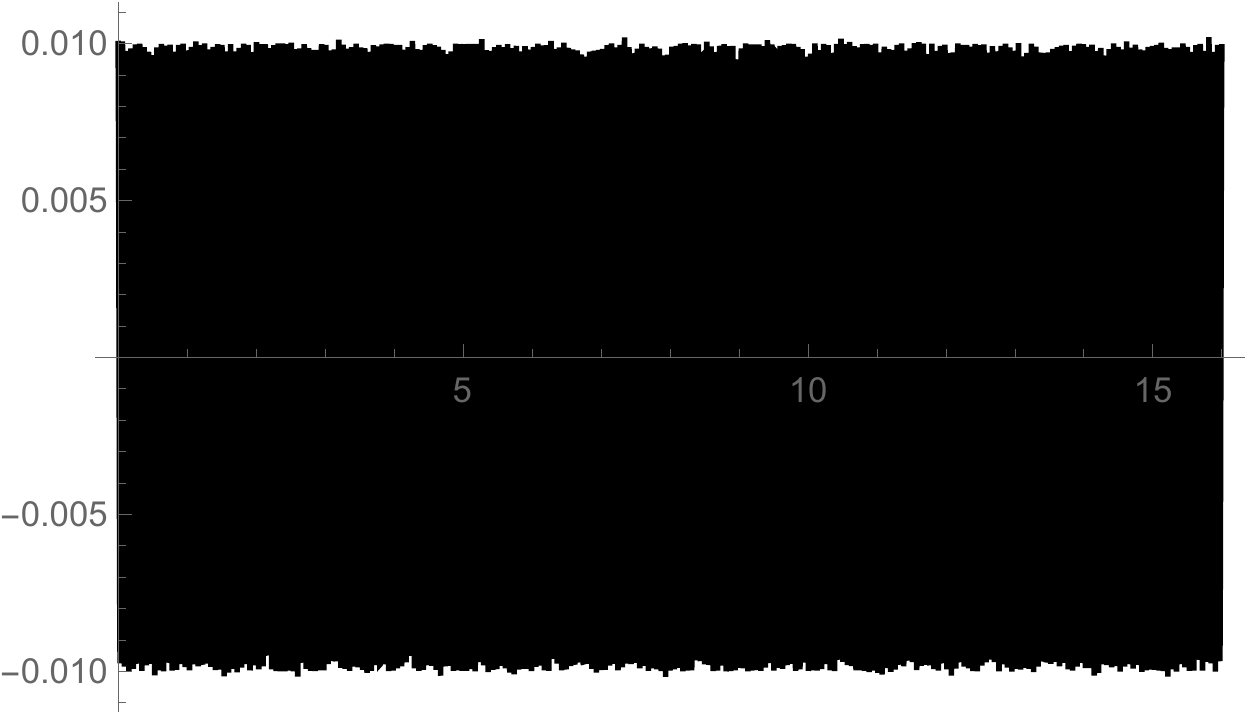}
\;
\includegraphics[scale=0.31]{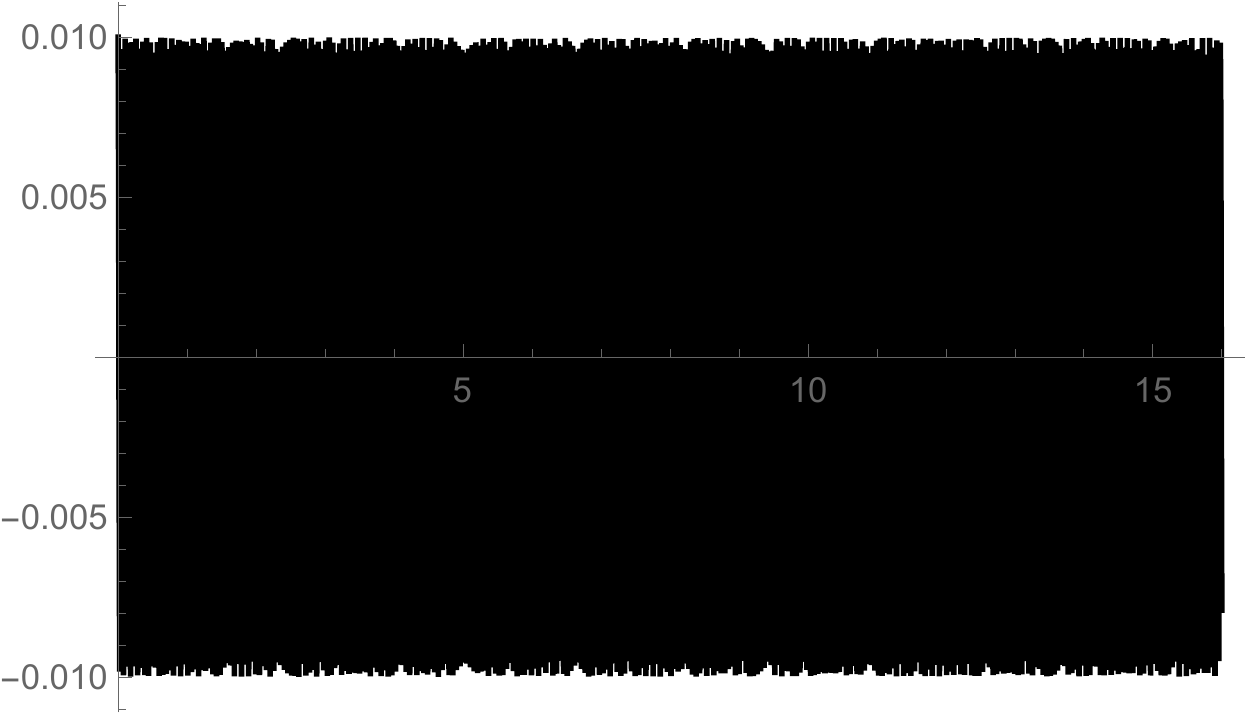}
\;
\includegraphics[scale=0.31]{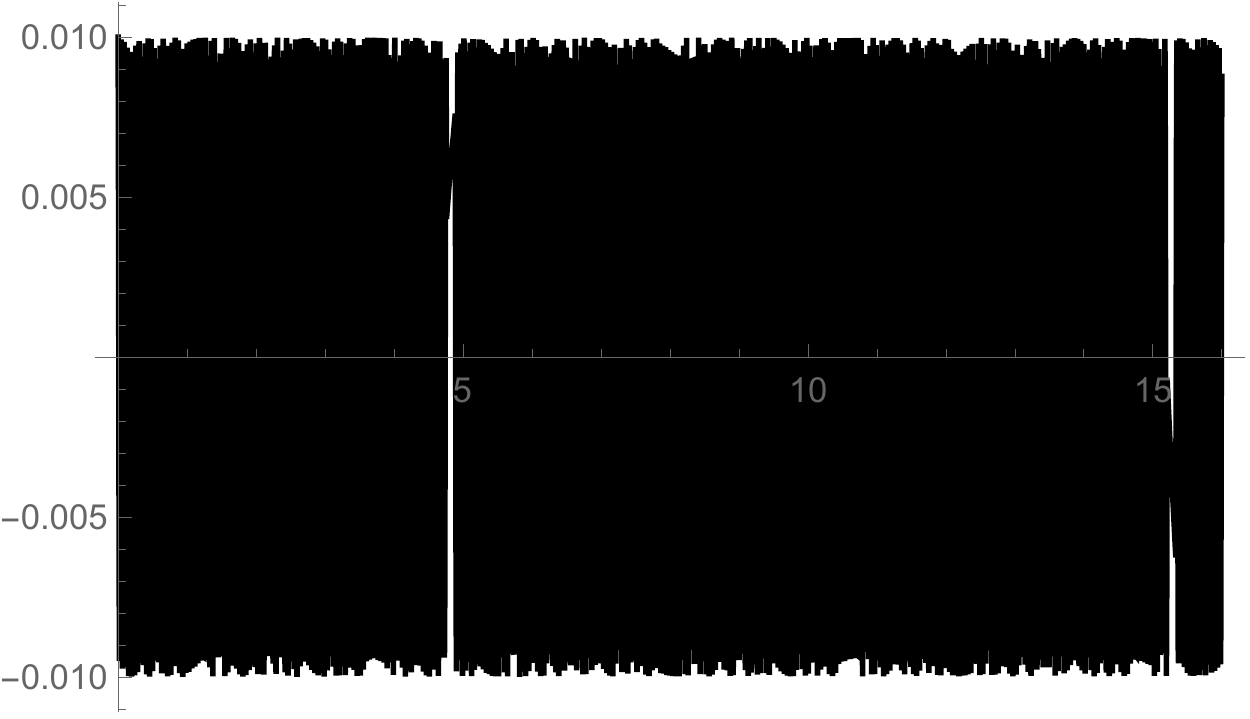}
\vspace{0.3 cm}
\caption{The plots of $\varphi_1^{12}, \ldots, \varphi_{12}^{12}$ for \eqref{firstattempt}, with $u_0(x)=6 \sin (2x) + 0.01 \sum_{n \neq 2} \sin (nx)$.}
\label{plots1cubo}
\vspace{0.3 cm}
\includegraphics[scale=0.31]{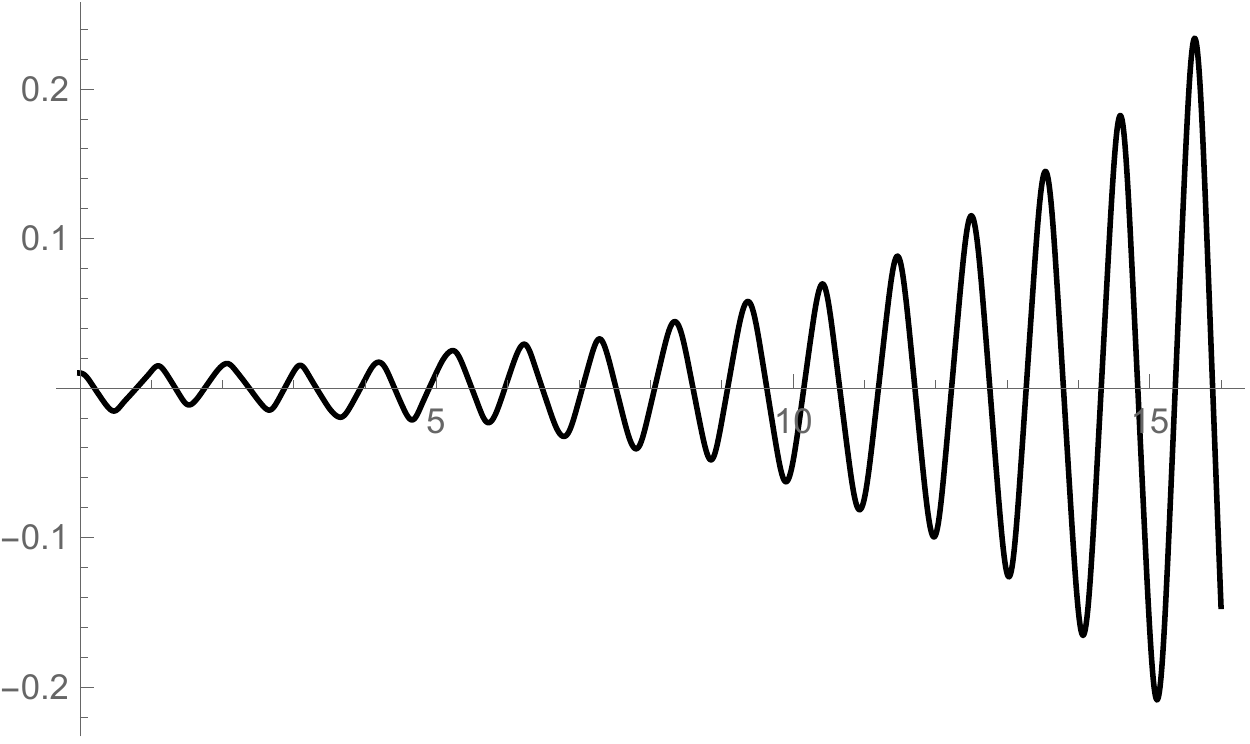}
\;
\includegraphics[scale=0.31]{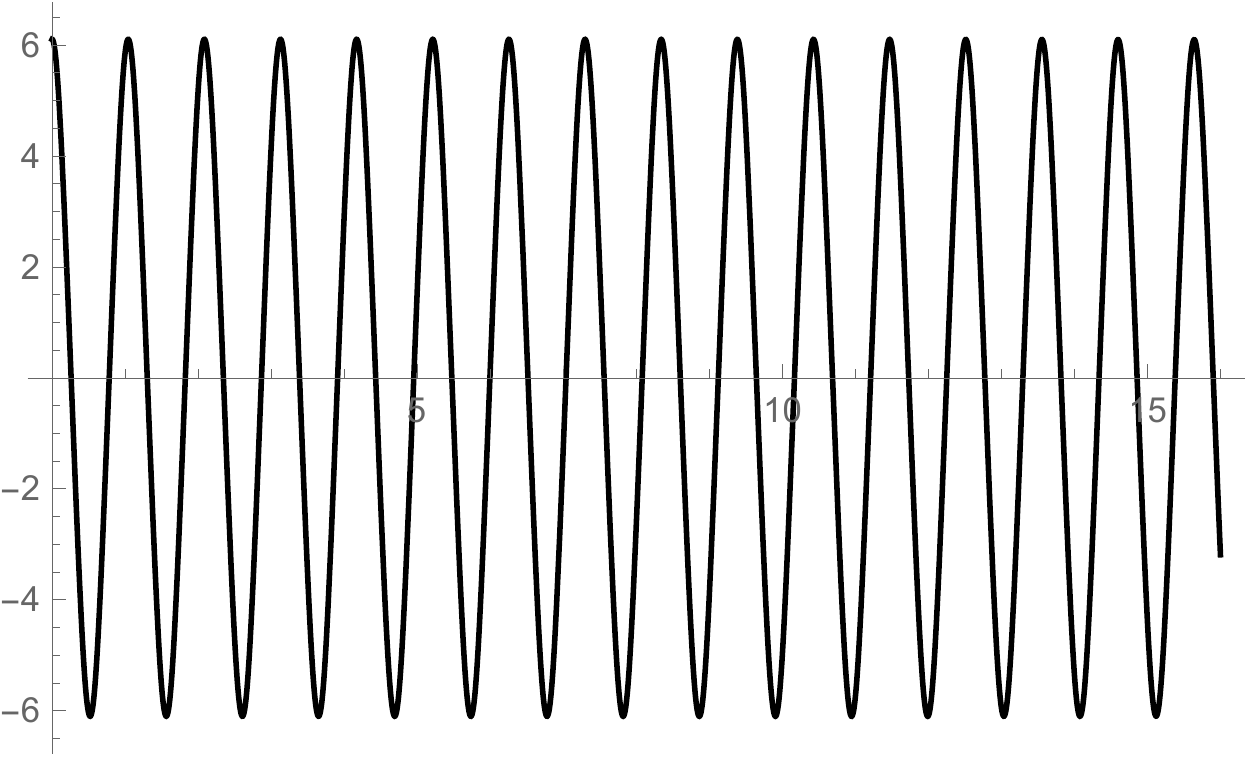}
\;
\includegraphics[scale=0.31]{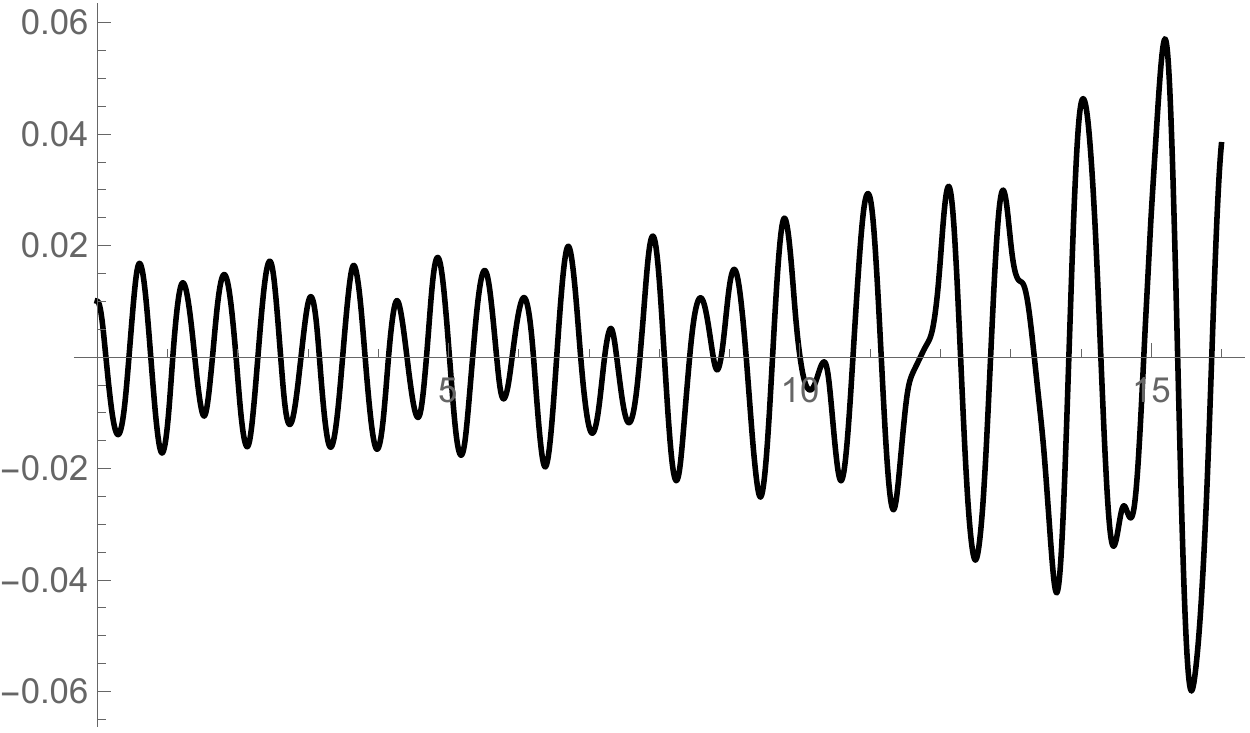}
\;
\includegraphics[scale=0.31]{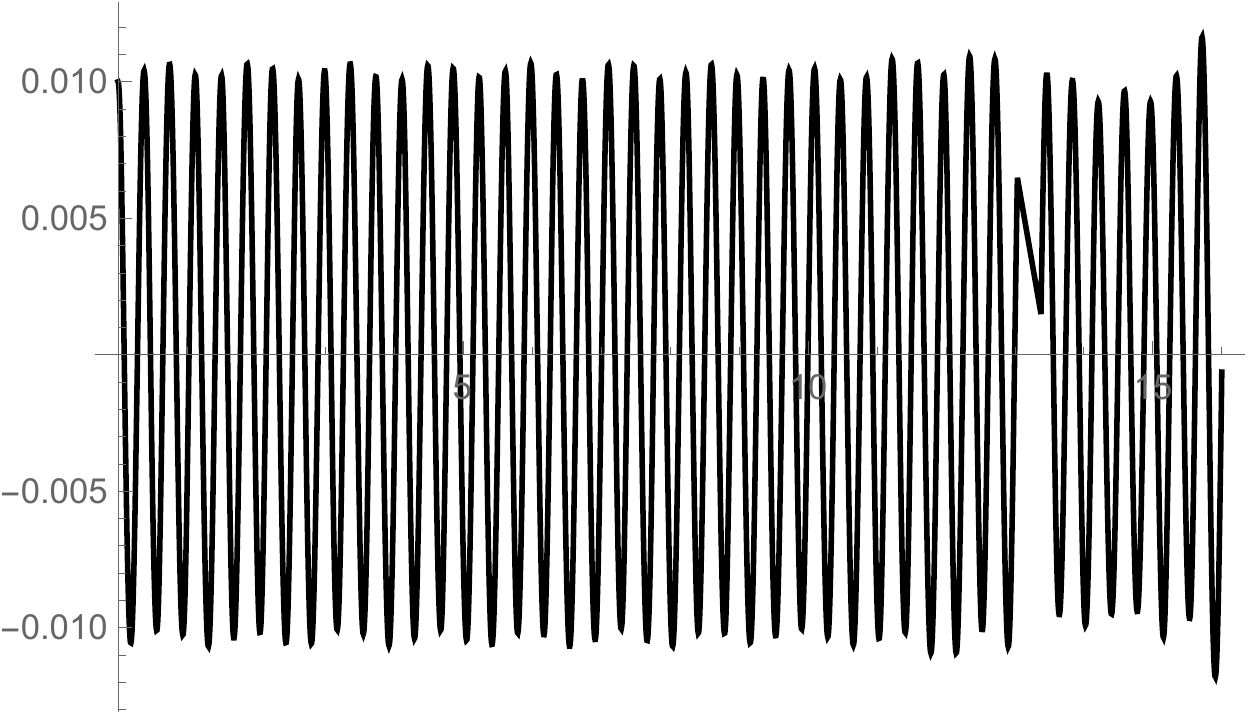}
\vspace{0.3cm}
\\
\includegraphics[scale=0.31]{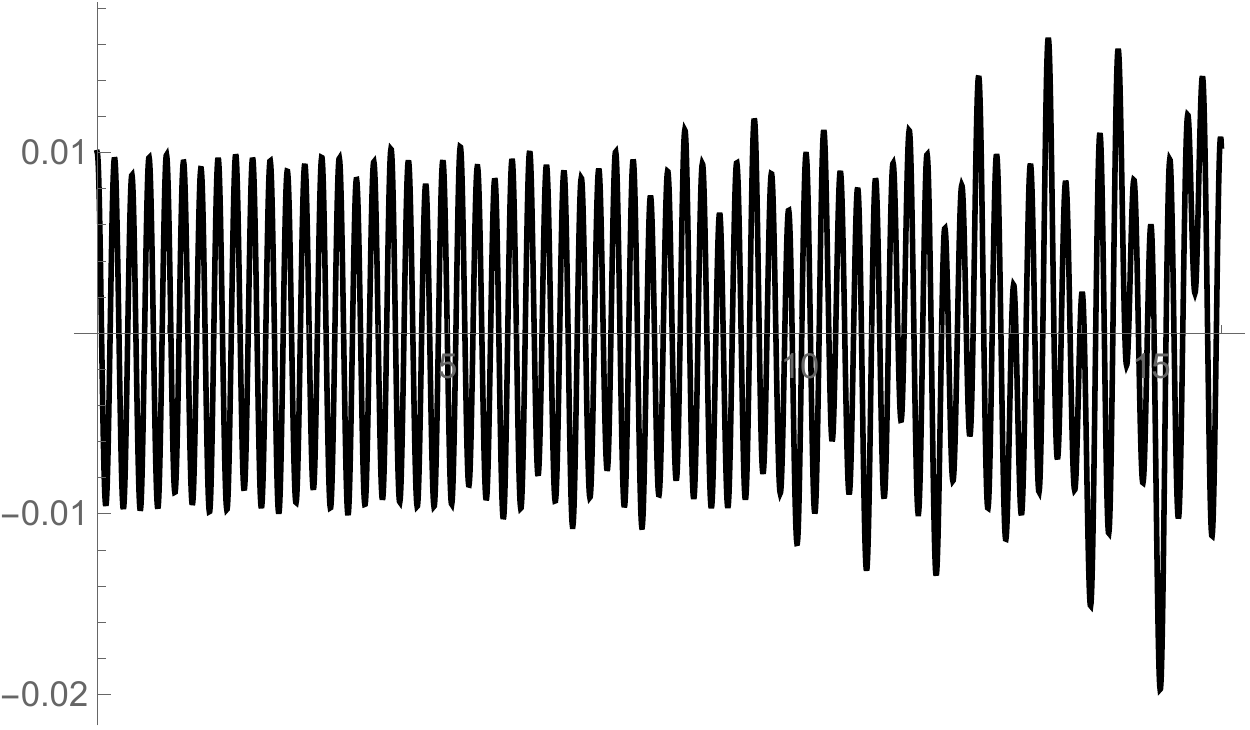}
\;
\includegraphics[scale=0.31]{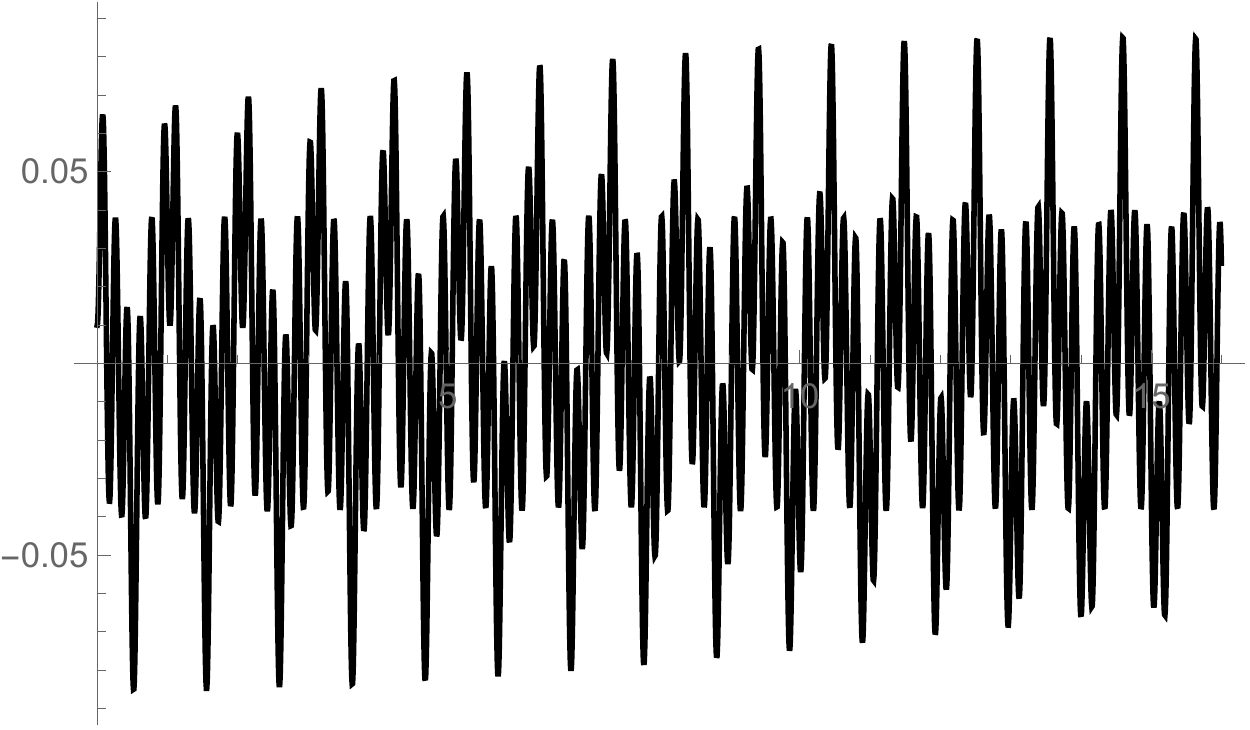}
\;
\includegraphics[scale=0.31]{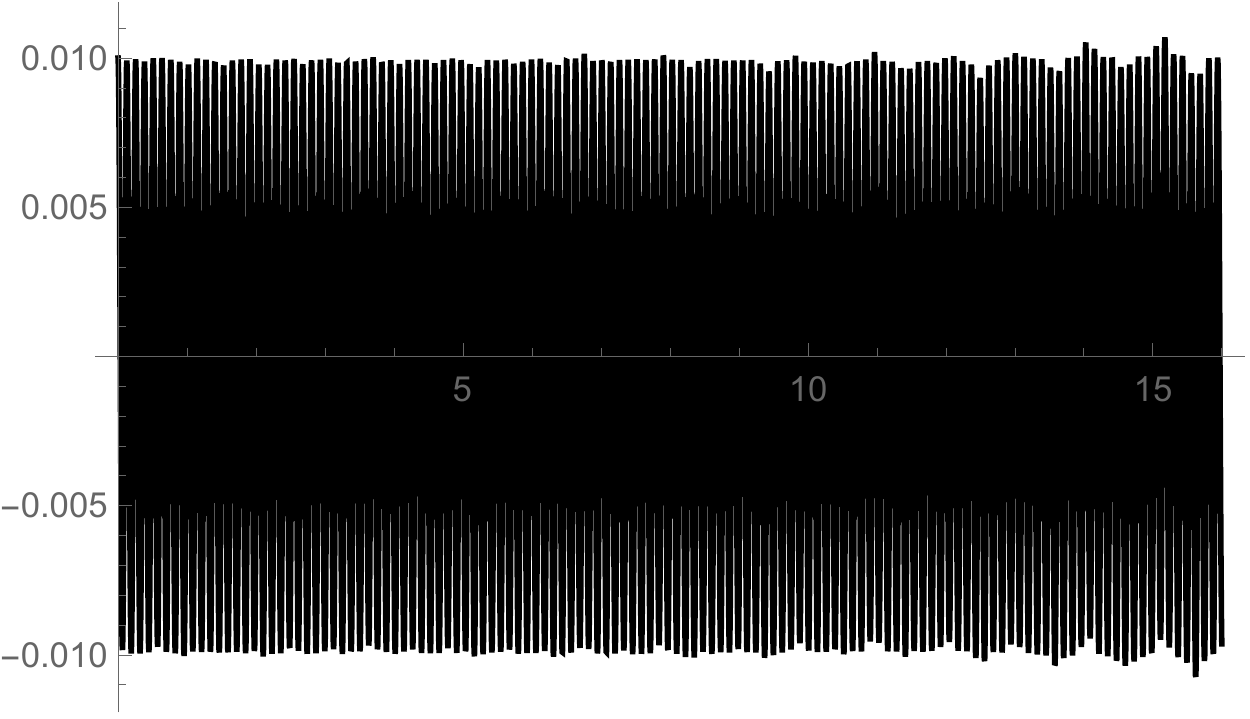}
\;
\includegraphics[scale=0.31]{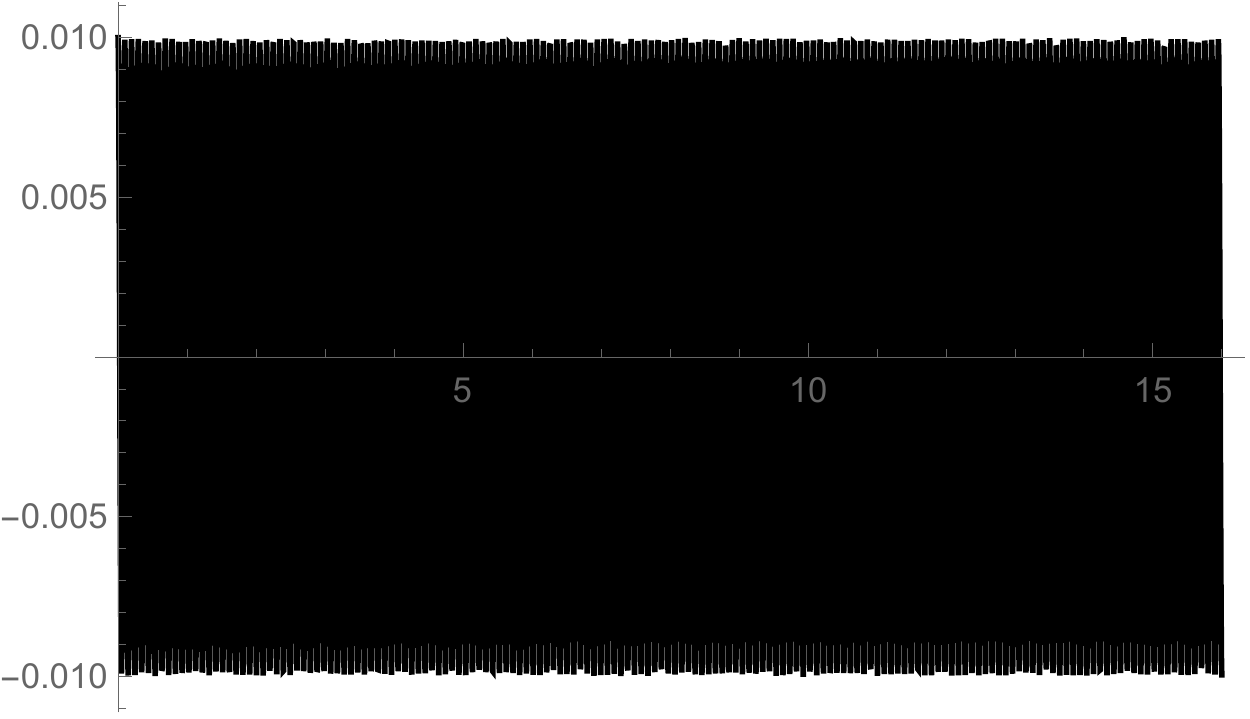}
\vspace{0.3 cm}
\caption{The plots of $\varphi_1^{12}, \ldots, \varphi_{8}^{12}$ (the last four modes not being significant) for problem \eqref{firstattempt}, with $u_0(x)=6.1 \sin (2x) + 0.01 \sum_{n \neq 2} \sin (nx)$.}
\label{plots2cubo}
\vspace{0.3 cm}
\includegraphics[scale=0.31]{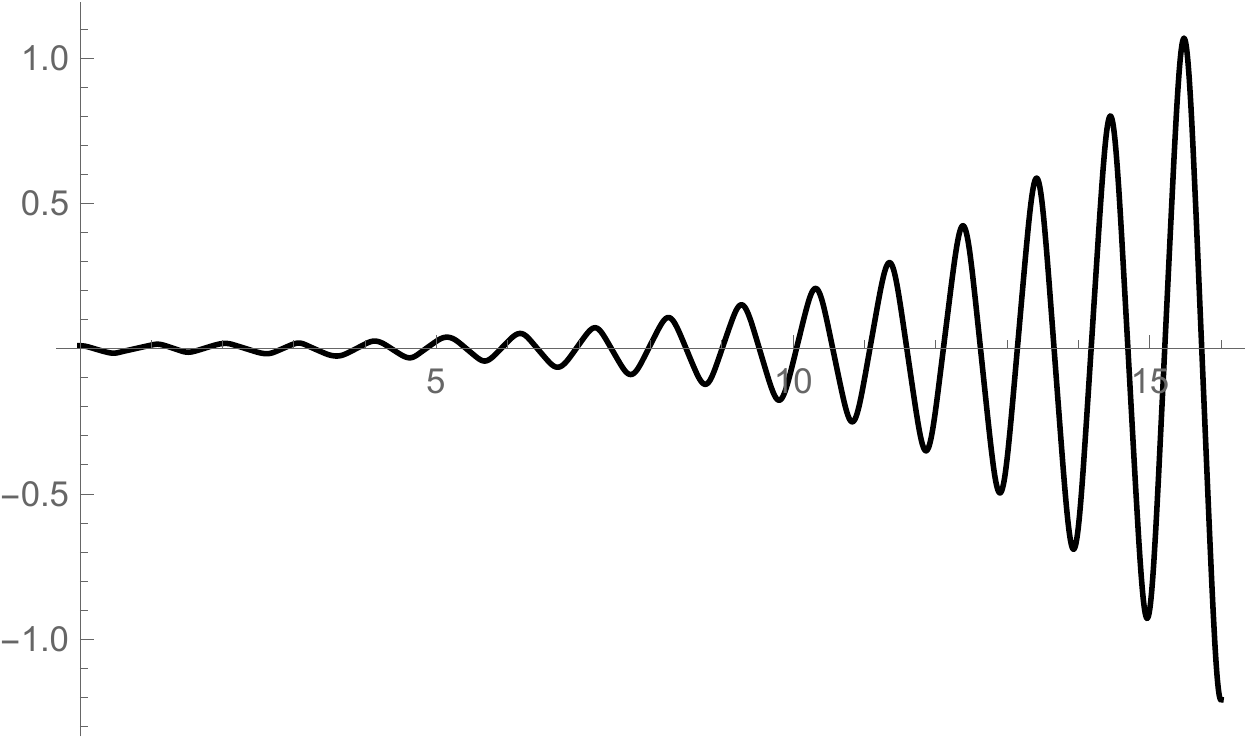}
\;
\includegraphics[scale=0.31]{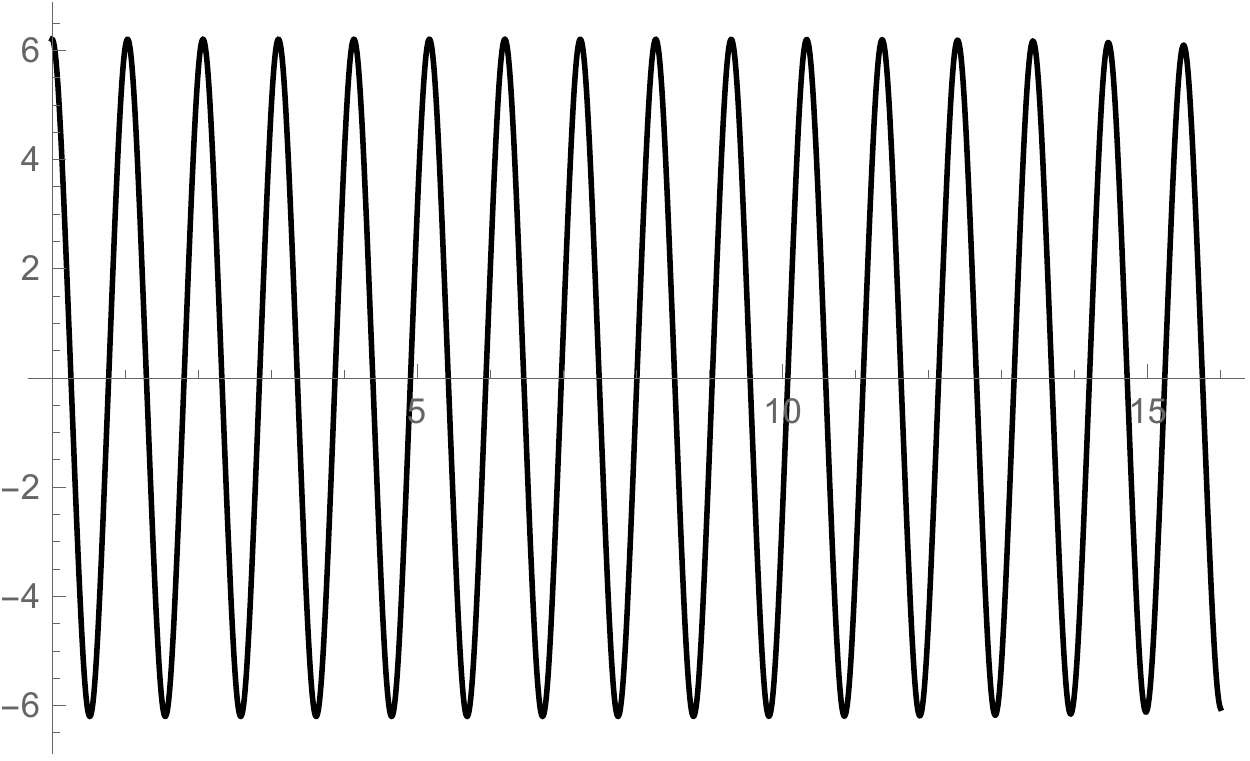}
\;
\includegraphics[scale=0.31]{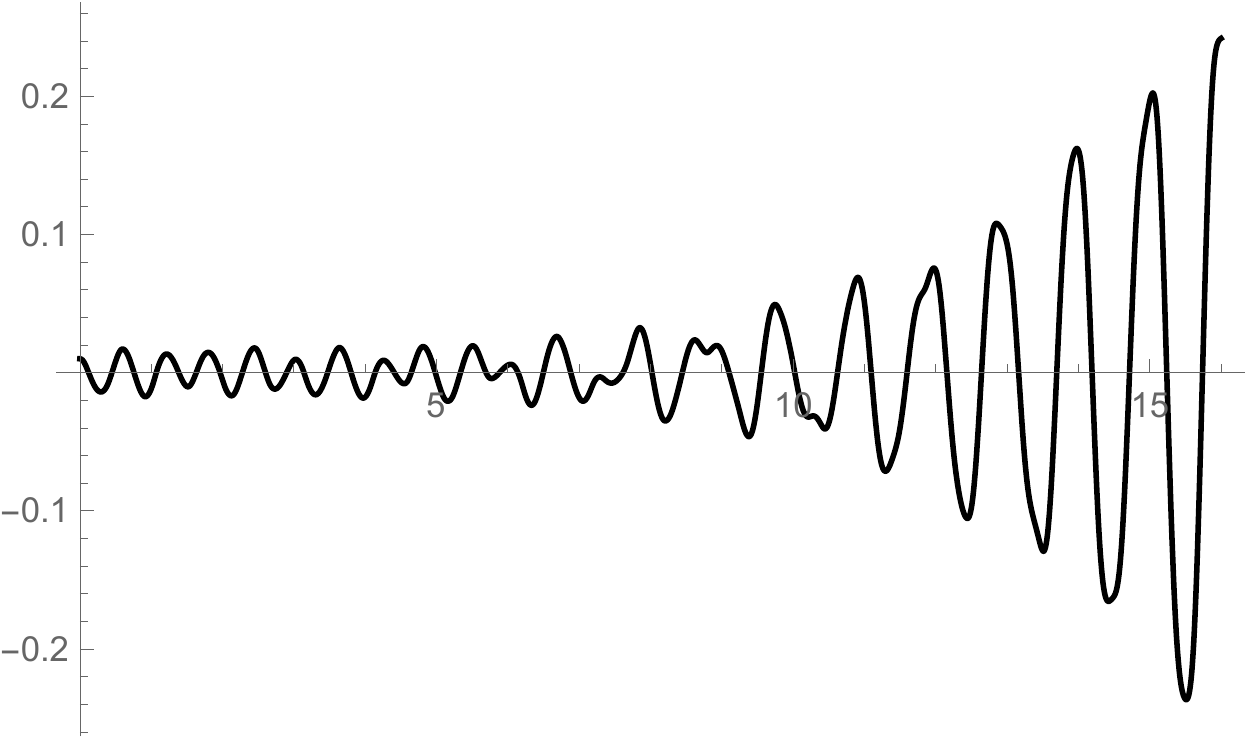}
\;
\includegraphics[scale=0.31]{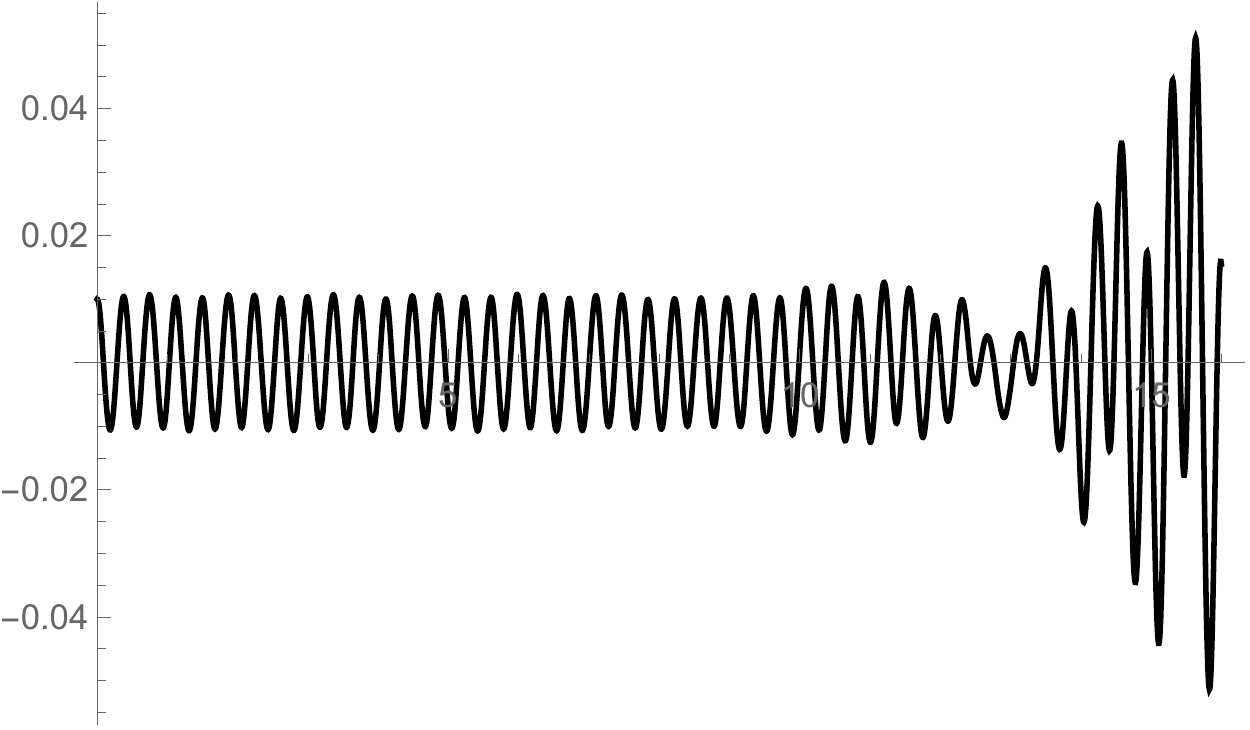}
\vspace{0.3cm}
\\
\includegraphics[scale=0.31]{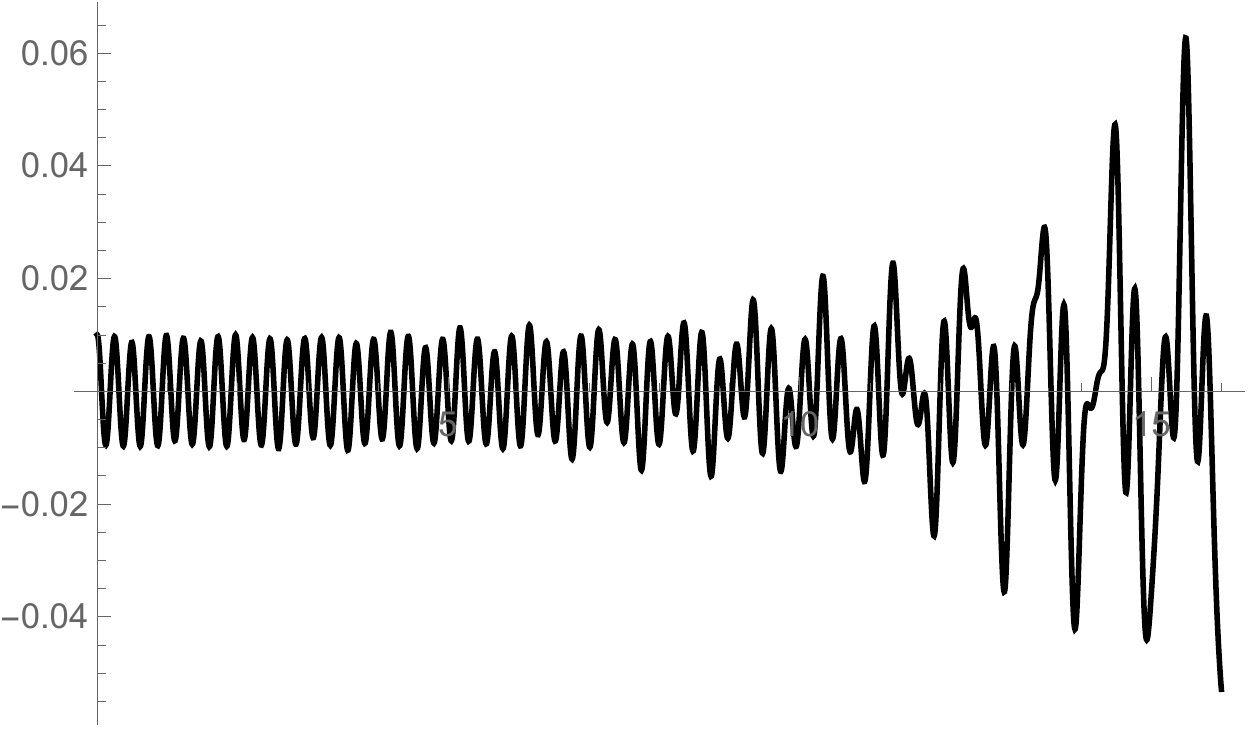}
\;
\includegraphics[scale=0.31]{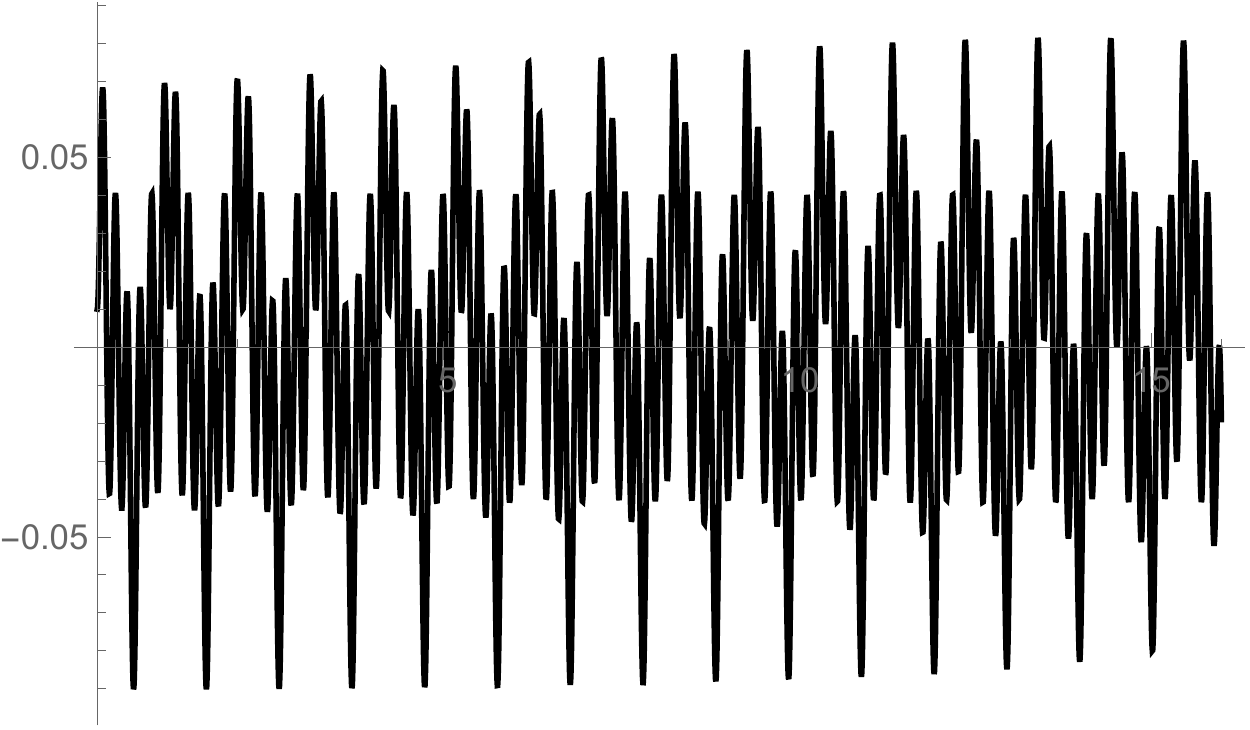}
\;
\includegraphics[scale=0.31]{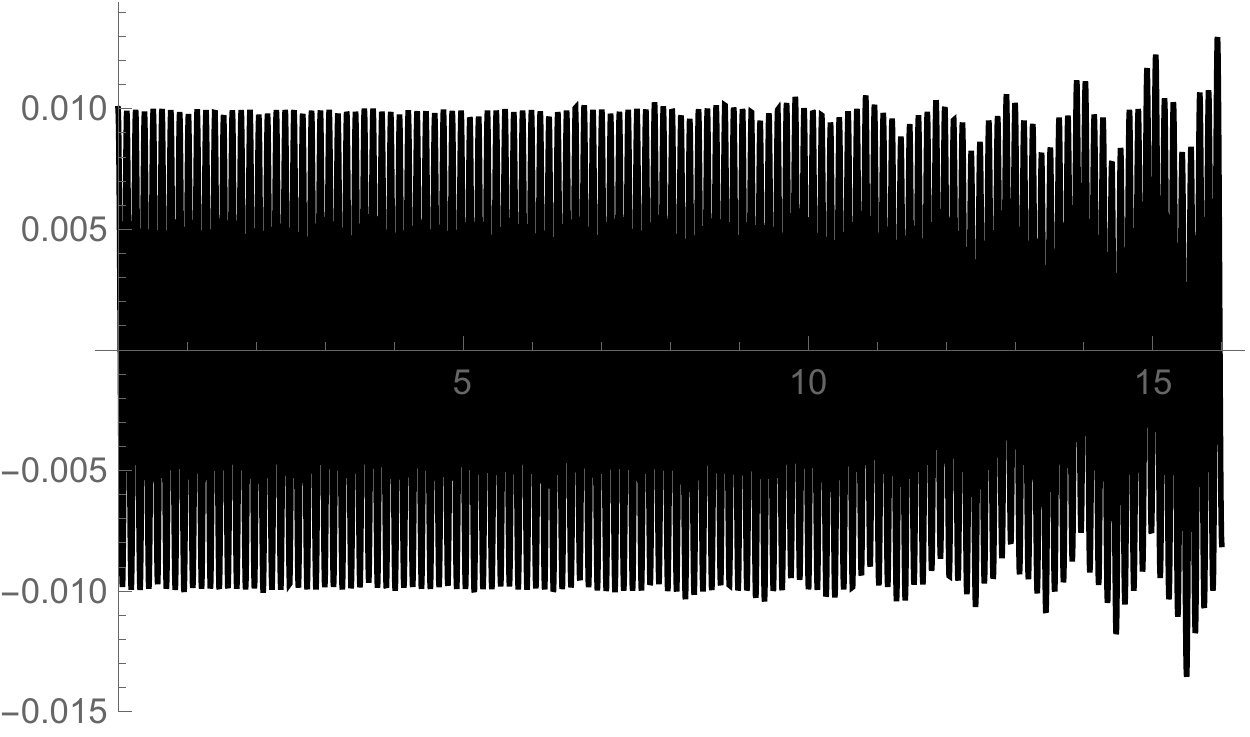}
\;
\includegraphics[scale=0.31]{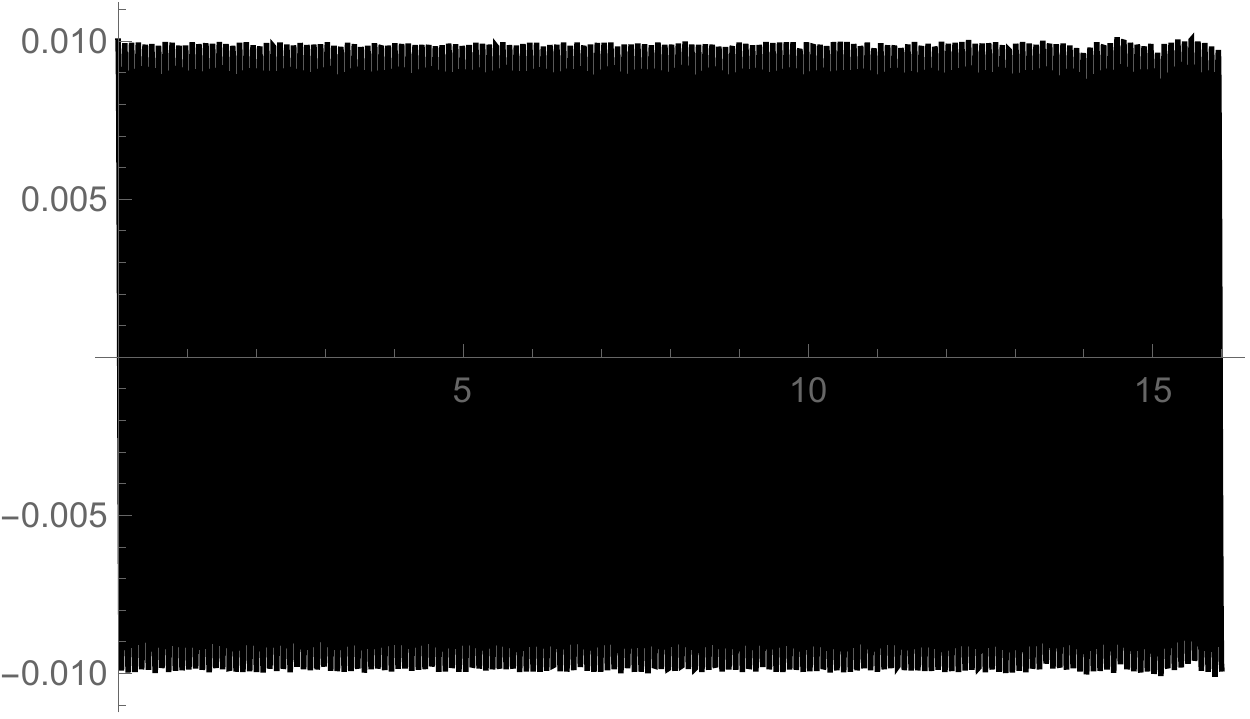}
\caption{We show the plots of $\varphi_1^{12}, \ldots, \varphi_{8}^{12}$ (again, the last four modes are not significant) for problem \eqref{firstattempt}, with $u_0(x)=6.2 \sin (2x) + 0.01 \sum_{n \neq 2} \sin (nx)$.}
\label{plots3cubo}
\end{figure}

With reference to the comments after Definition \ref{unstable}, we now show two experiments where the sixth mode is prevailing; here we point out that, under small variations of the amplitude of the prevailing mode, \emph{more} residual modes may fulfill \eqref{finitegrande}, and the picture appears ``catastrophic''. Indeed, in Figure \ref{tuttimodicubo1}, where $\varphi_6^{12}(0)=50.097$, the solution is stable but its modes other than $6, 12$ are showing a significant change in their behaviour. Increasing the initial amplitude to $\varphi_6^{12}(0)=50.098$, it may be observed that only the first mode enters the setting of Definition \ref{unstable} (we do not show the picture) while, when reaching $\varphi_6^{12}(0)=50.099$, all the modes which previously absorbed energy grow substantially (Figure \ref{tuttimodicubo2}).

\begin{figure}[!ht]
\center
\includegraphics[scale=0.31]{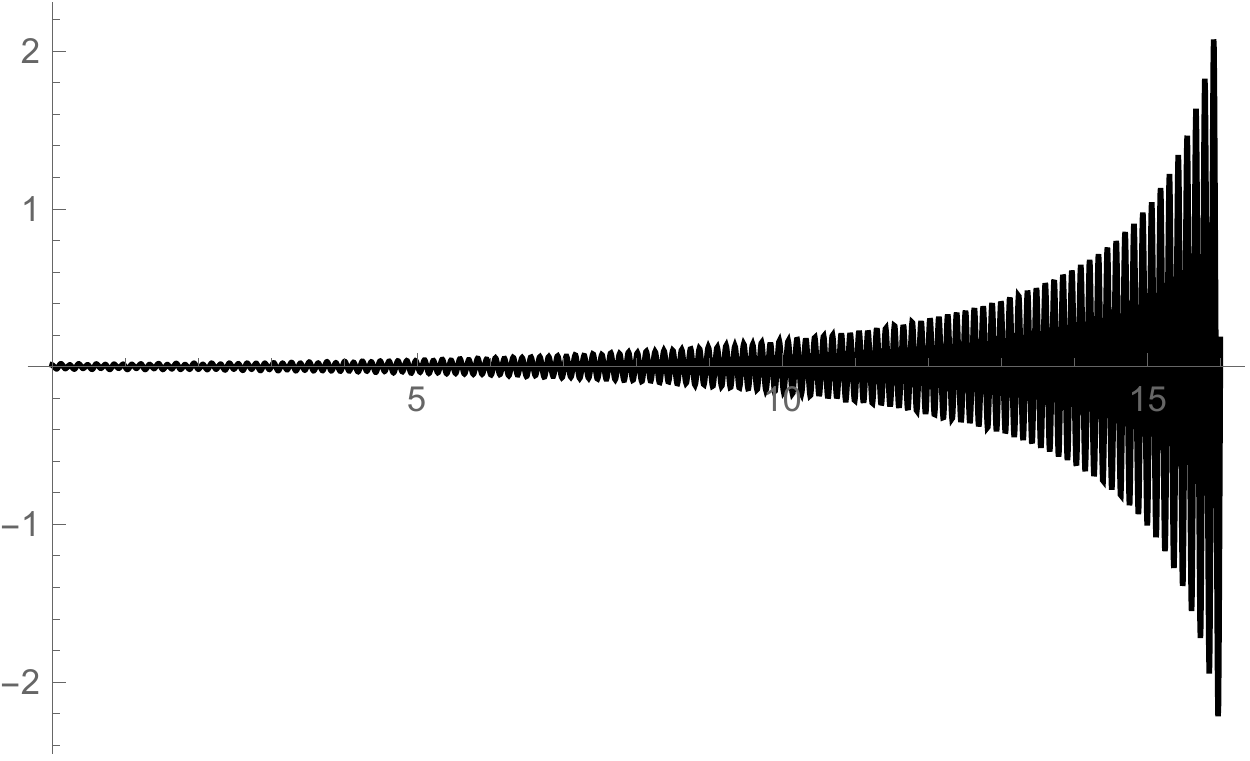}
\;
\includegraphics[scale=0.31]{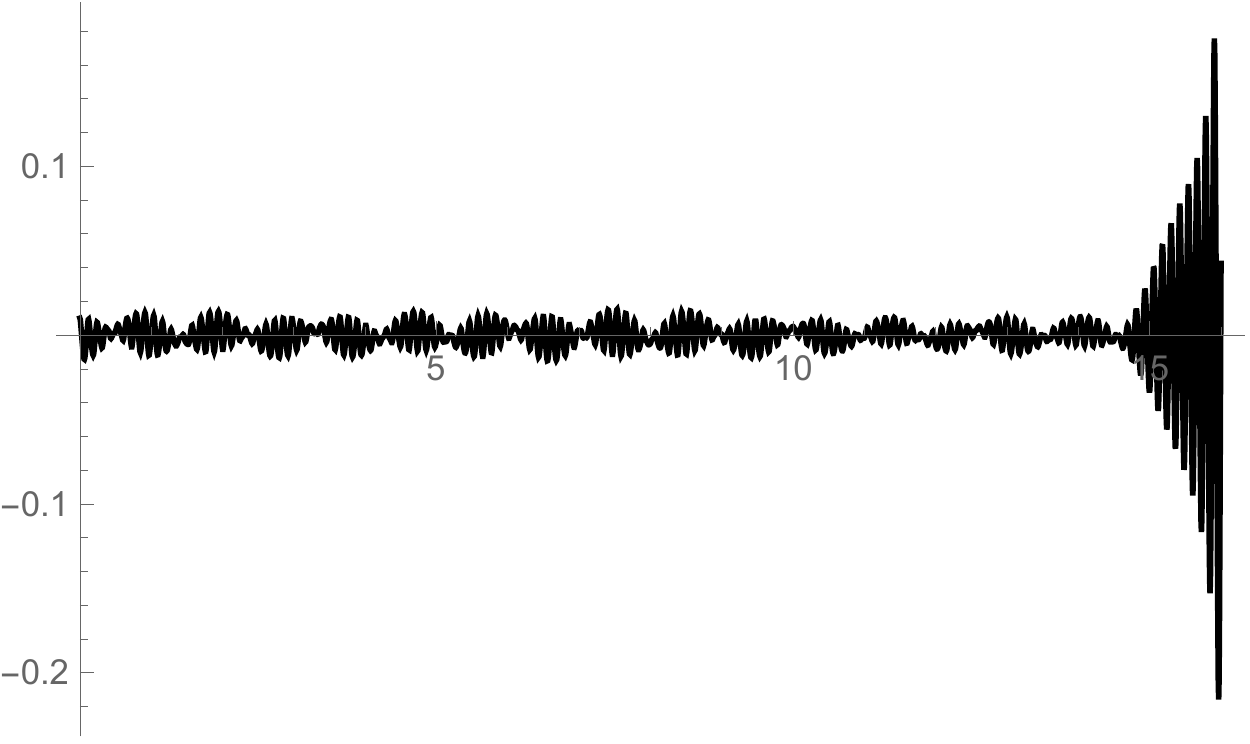}
\;
\includegraphics[scale=0.31]{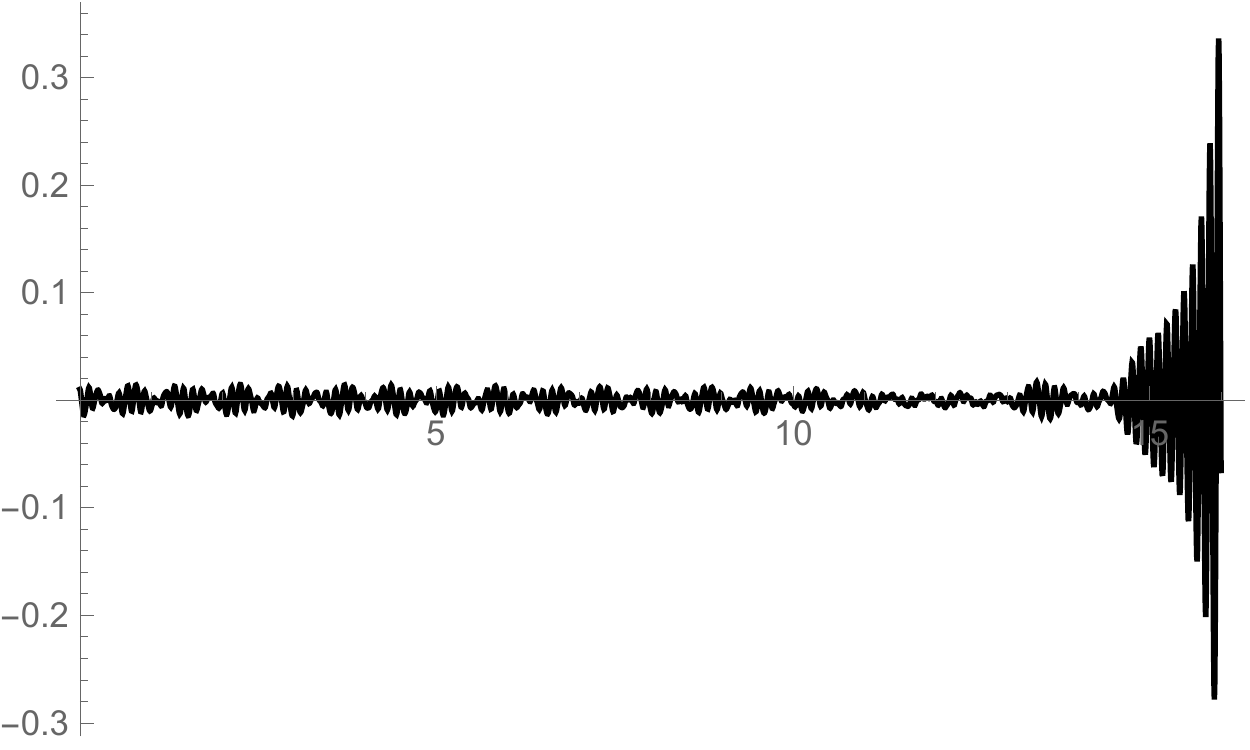}
\;
\includegraphics[scale=0.31]{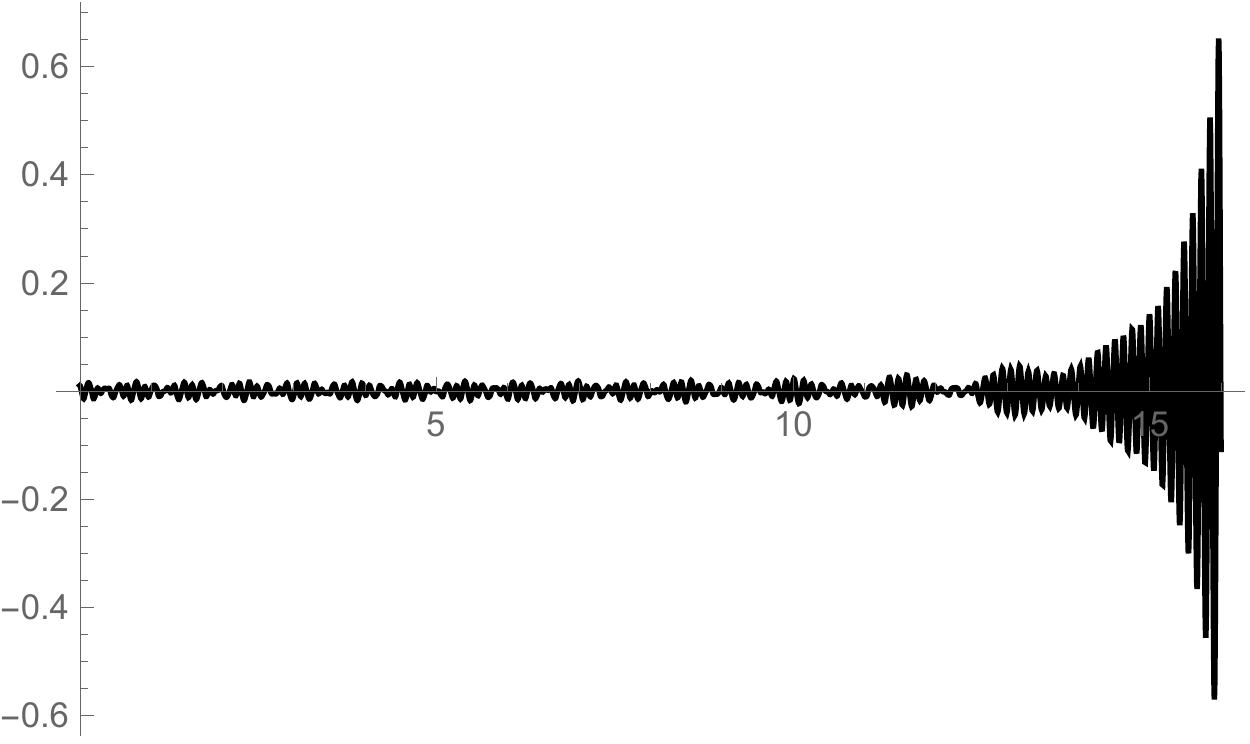}
\vspace{0.3cm}
\\
\includegraphics[scale=0.31]{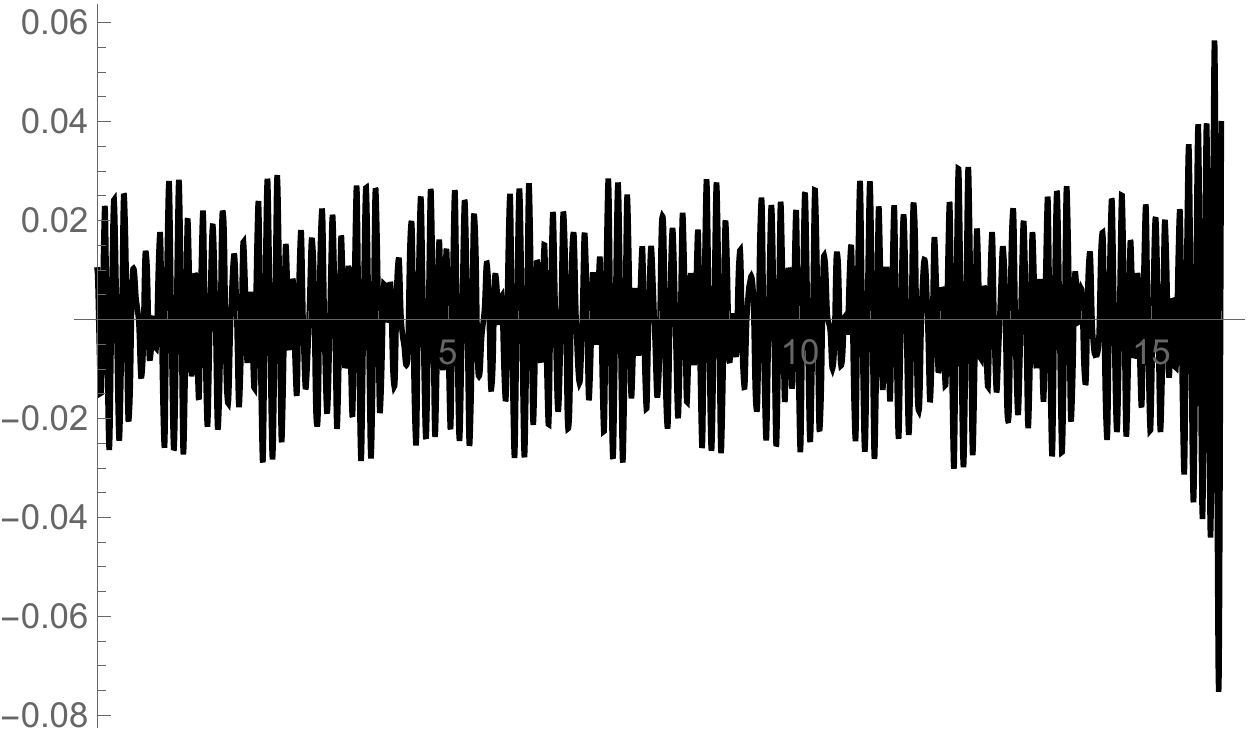}
\;
\includegraphics[scale=0.31]{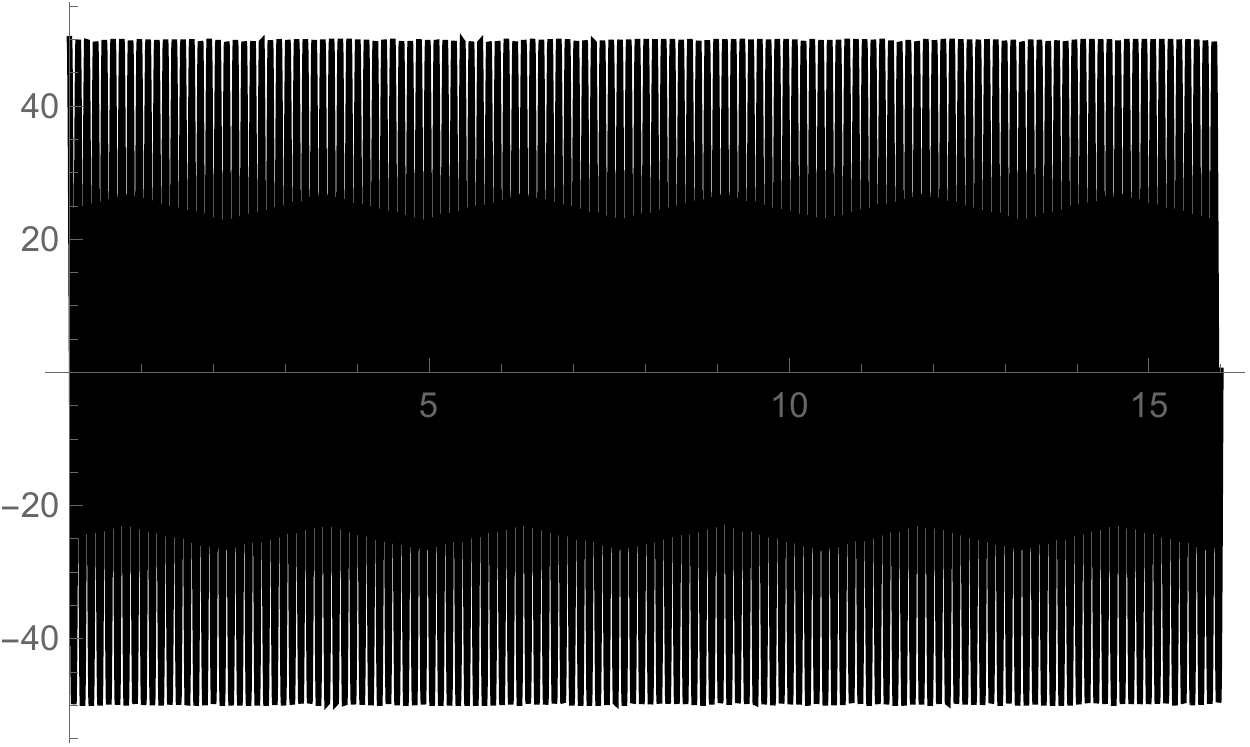}
\;
\includegraphics[scale=0.31]{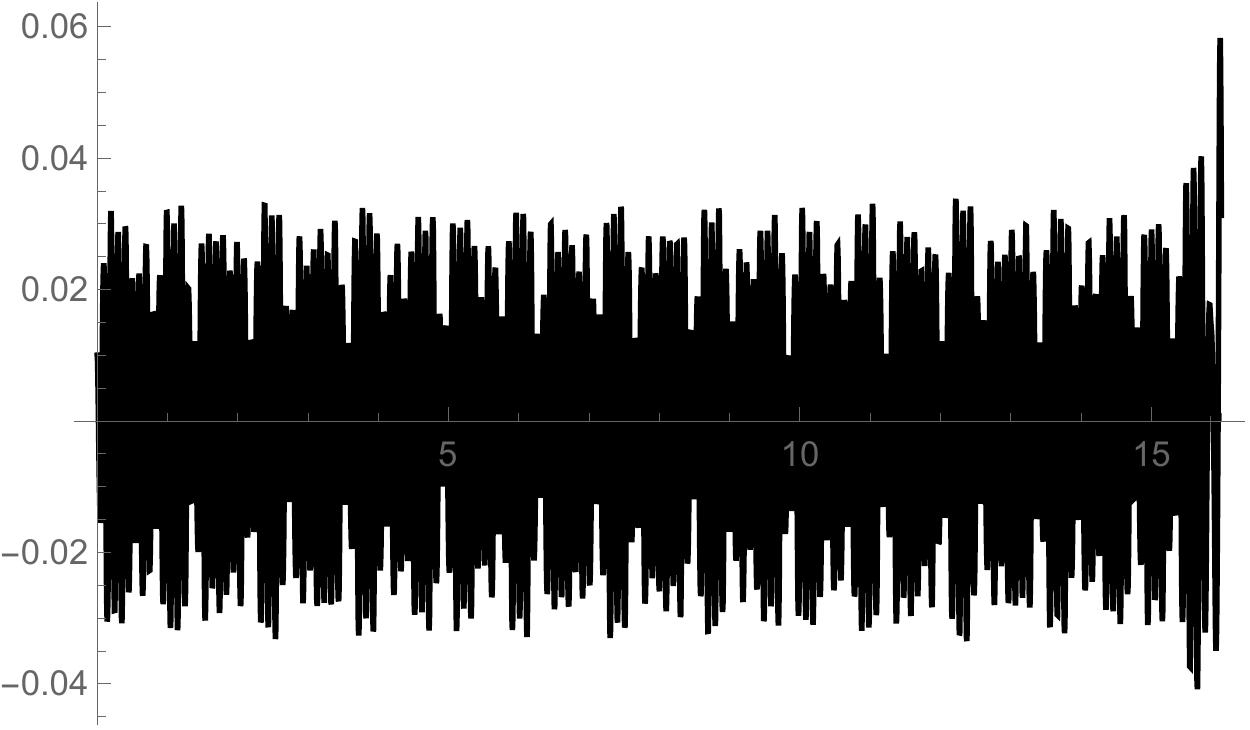}
\;
\includegraphics[scale=0.31]{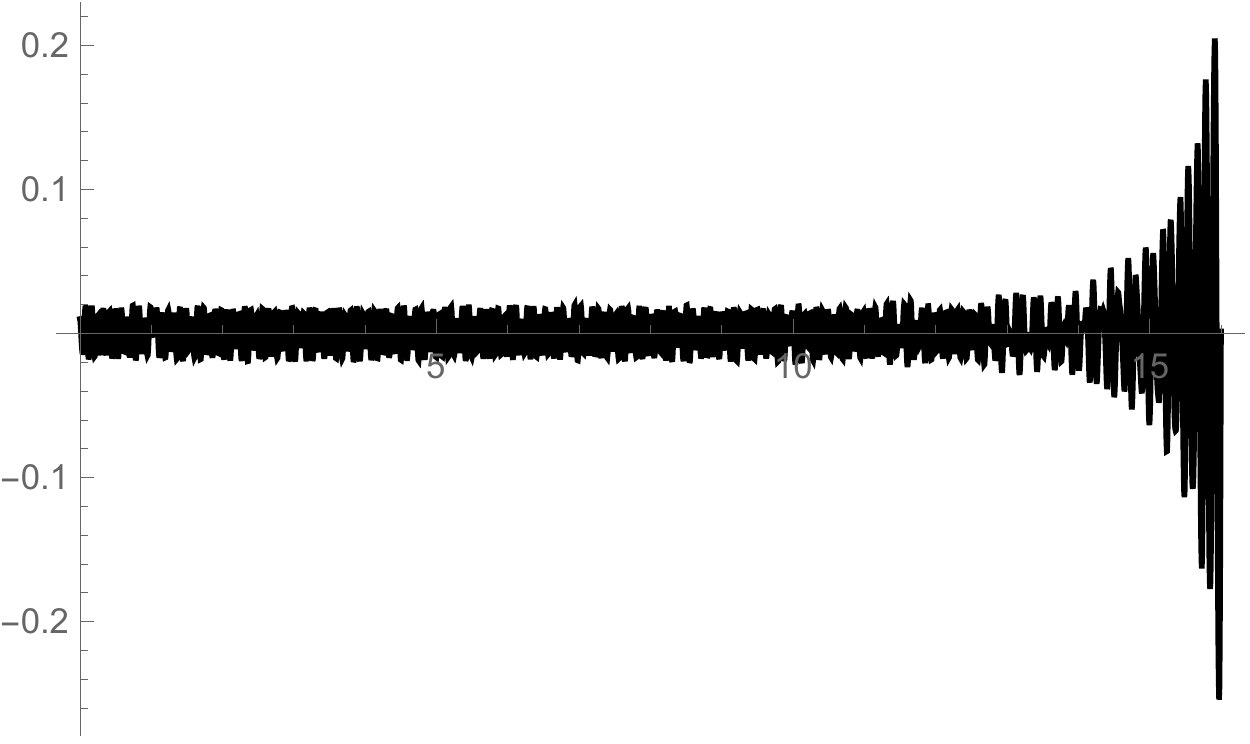}
\vspace{0.3cm}
\\
\includegraphics[scale=0.31]{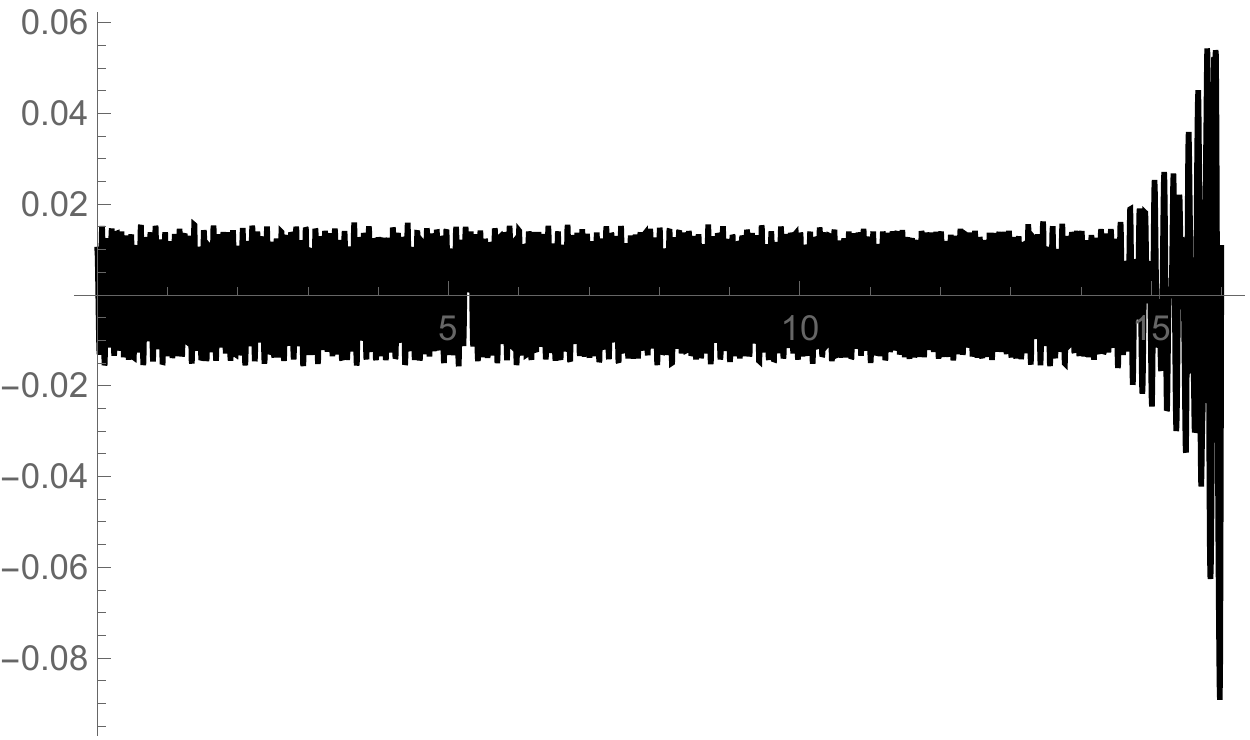}
\;
\includegraphics[scale=0.31]{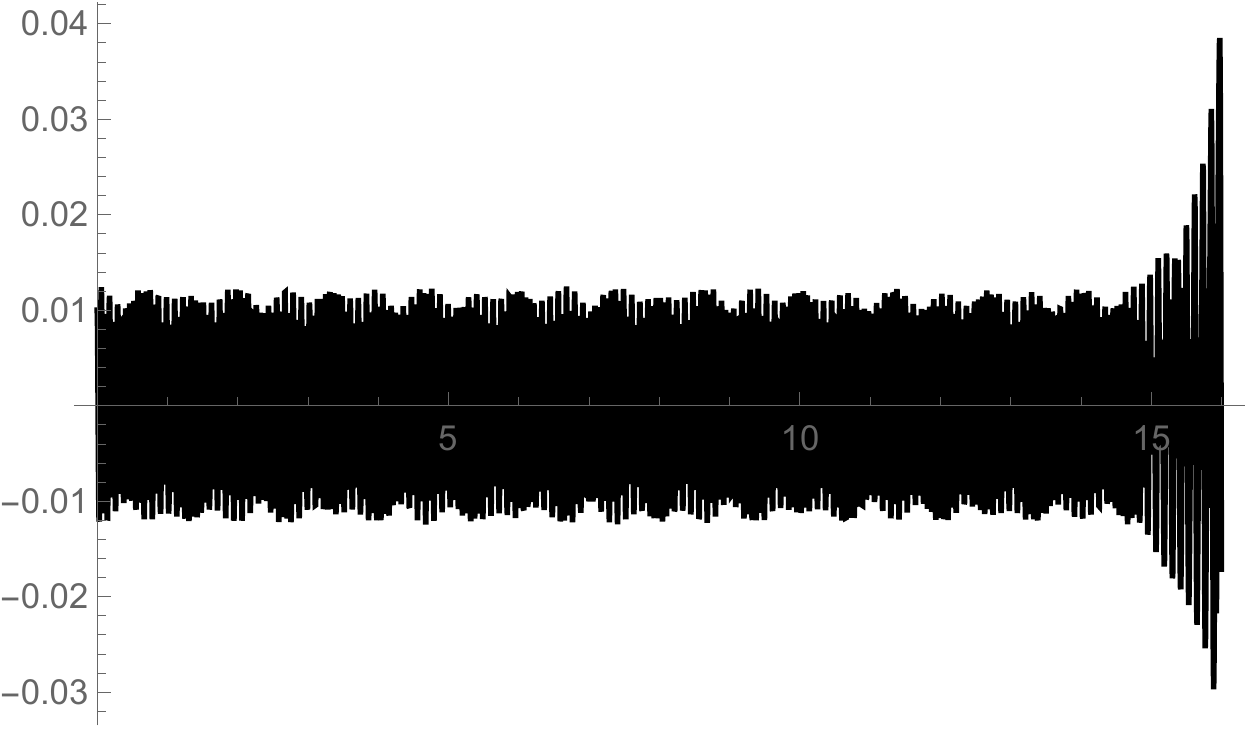}
\;
\includegraphics[scale=0.31]{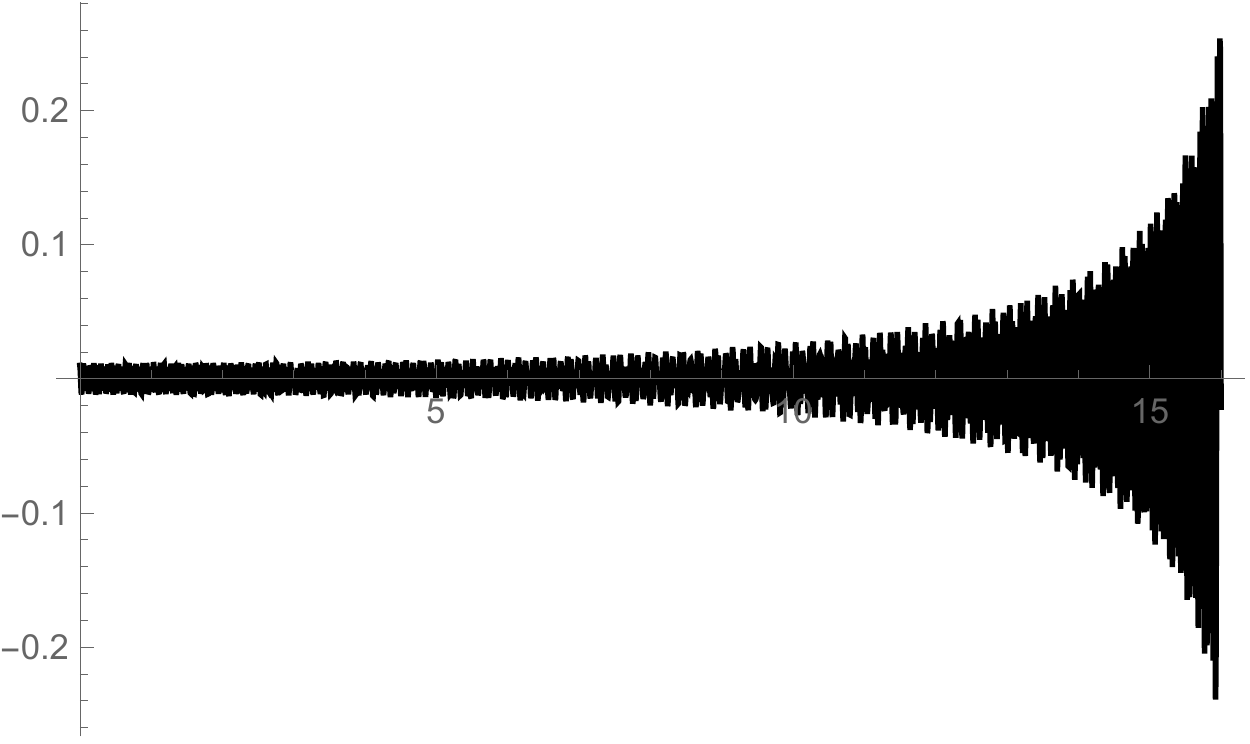}
\;
\includegraphics[scale=0.31]{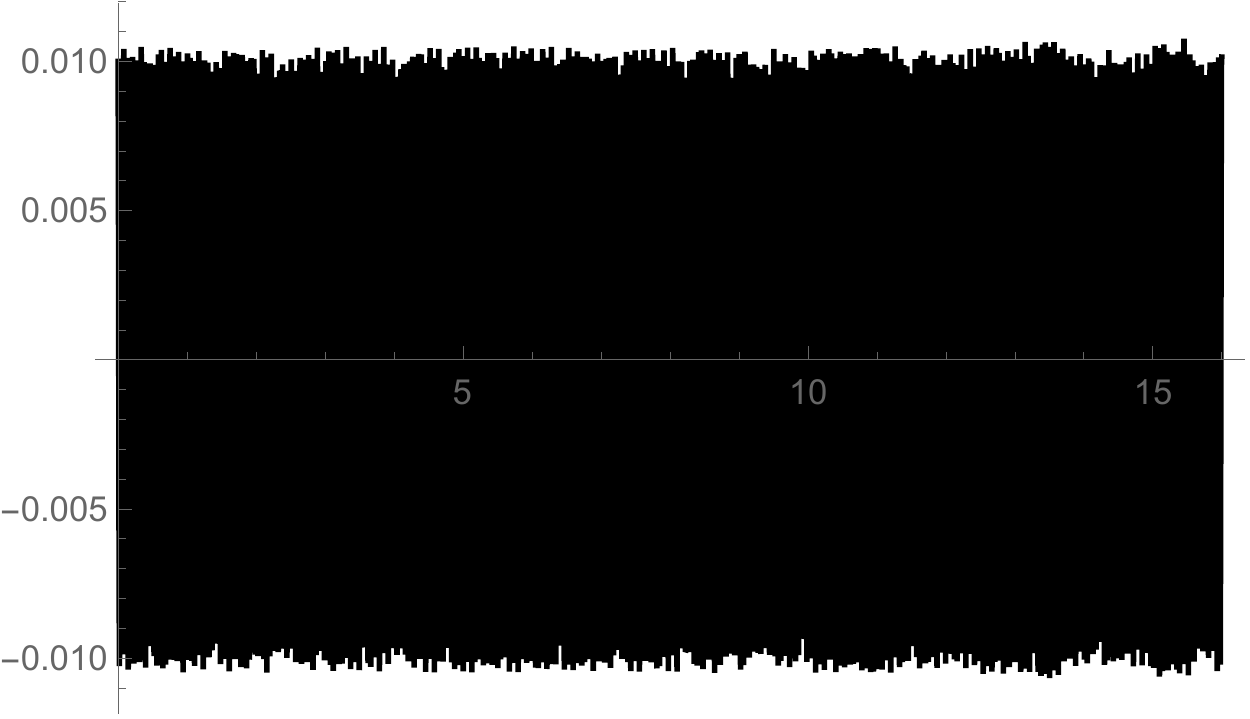}
\caption{We depict the solution of problem \eqref{firstattempt} for $u_0(x)=50.097 \sin (6x) + 0.01 \sum_{n \neq 6} \sin (nx)$ and $u_1(x) \equiv 0$.}
\label{tuttimodicubo1}
\end{figure}

\begin{figure}[!ht]
\center
\includegraphics[scale=0.31]{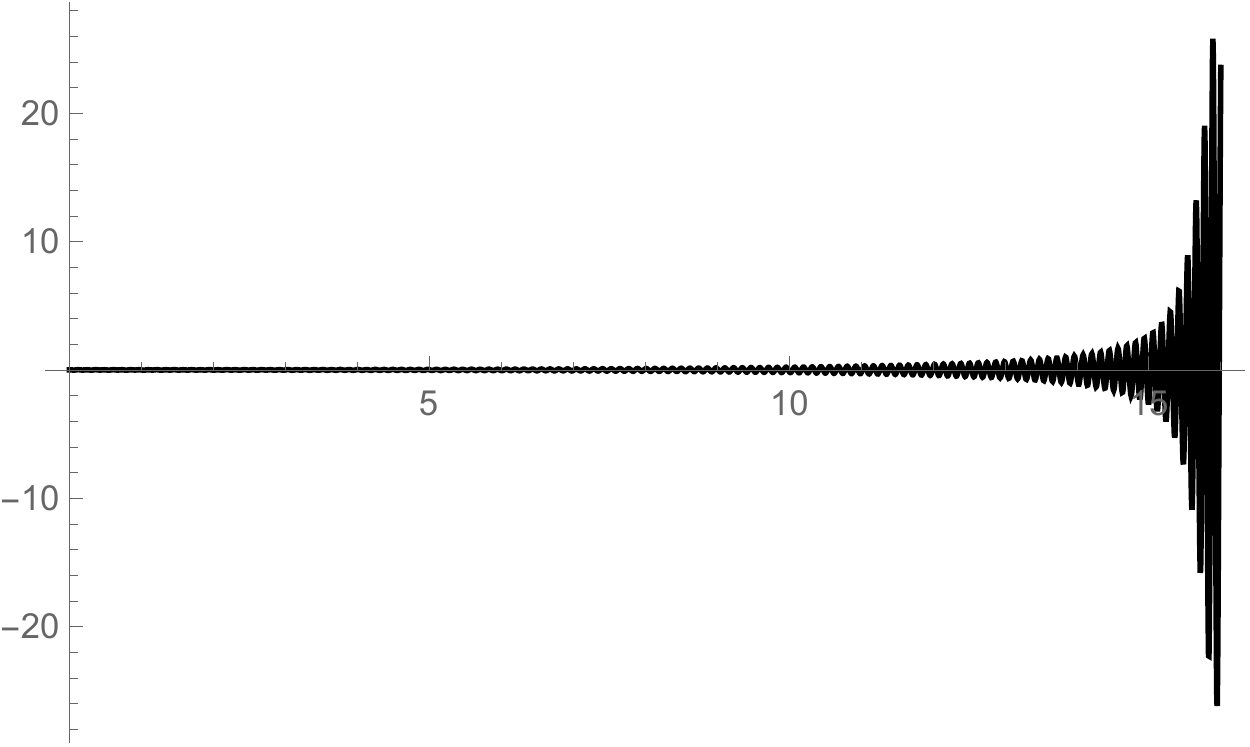}
\;
\includegraphics[scale=0.31]{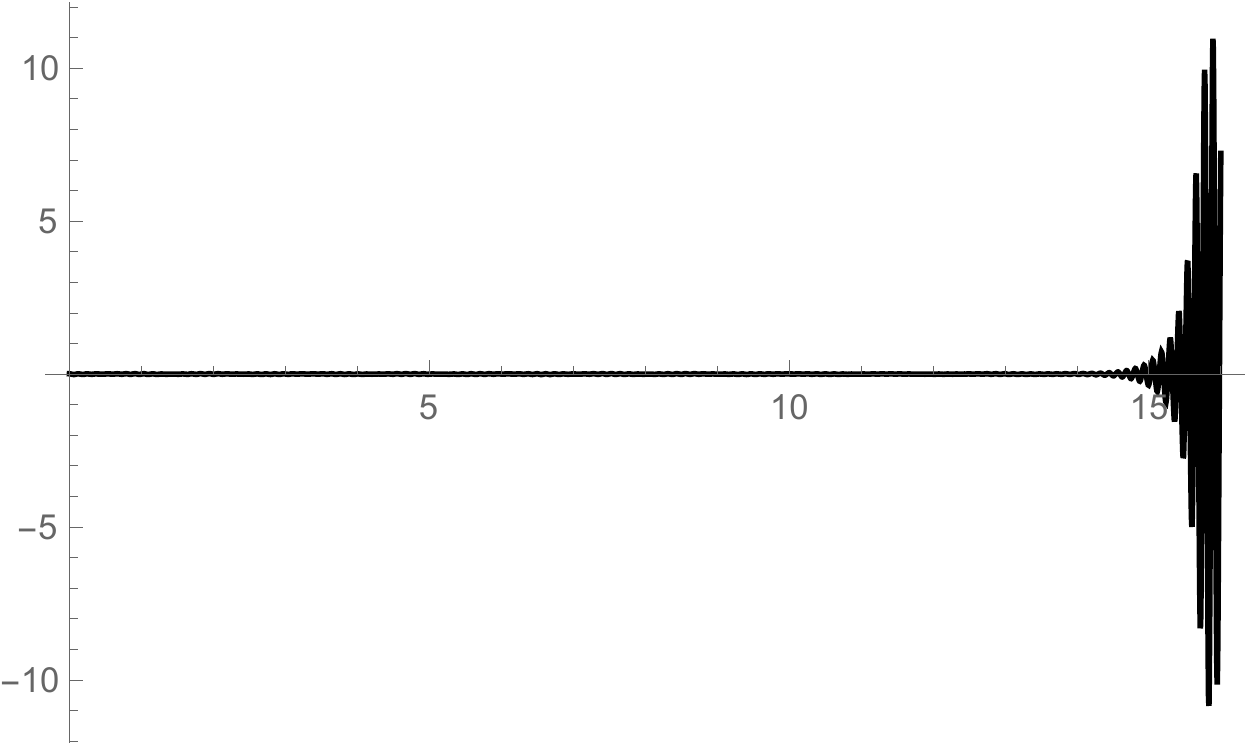}
\;
\includegraphics[scale=0.31]{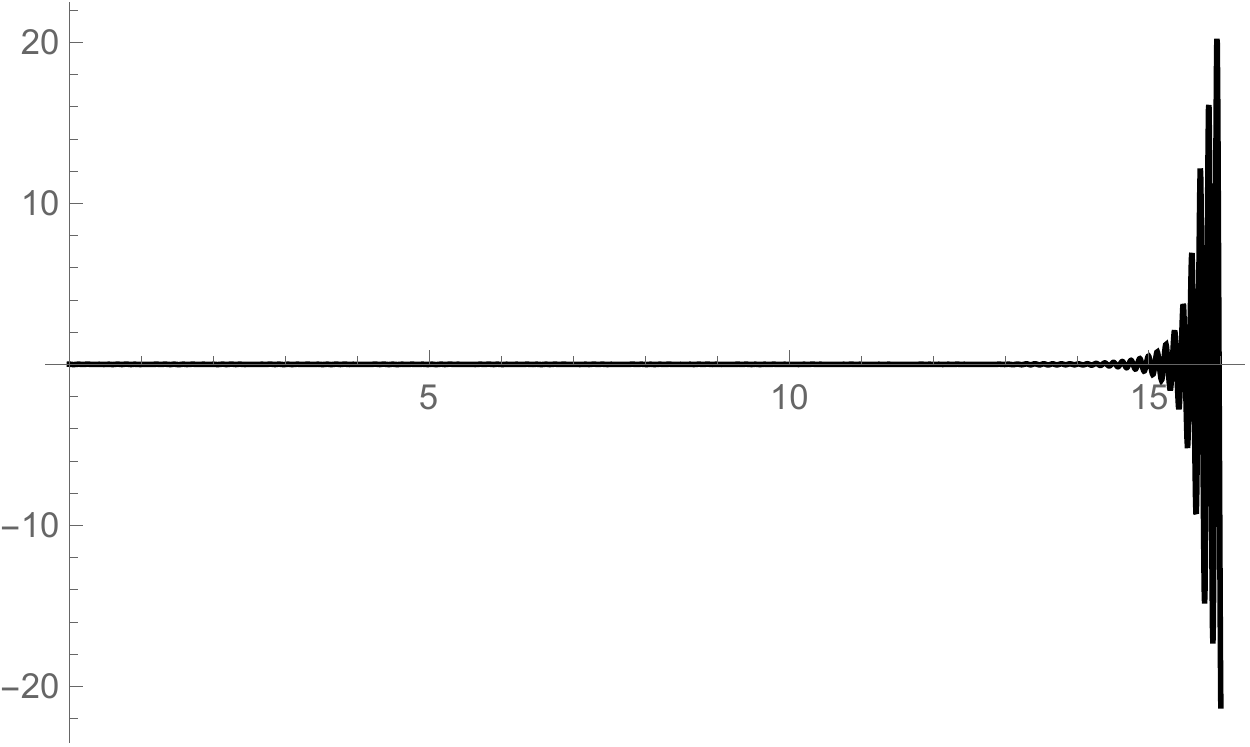}
\;
\includegraphics[scale=0.31]{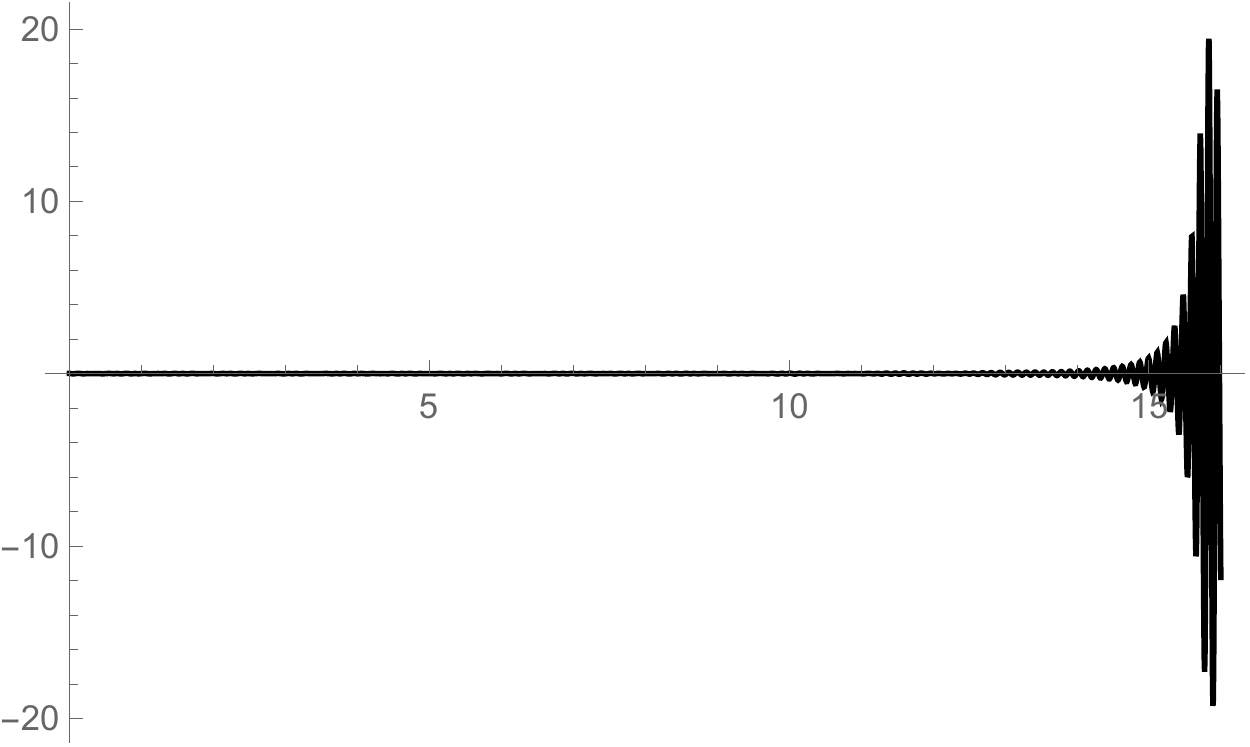}
\vspace{0.3cm}
\\
\includegraphics[scale=0.31]{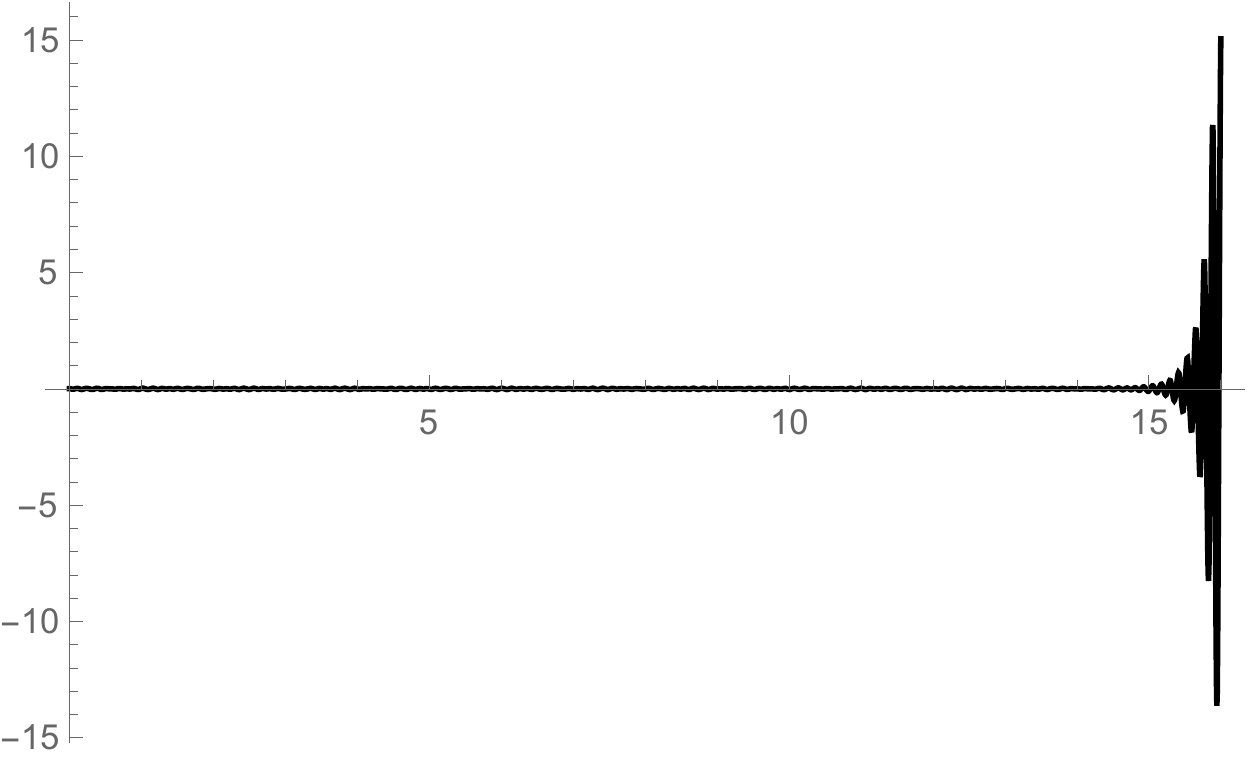}
\;
\includegraphics[scale=0.31]{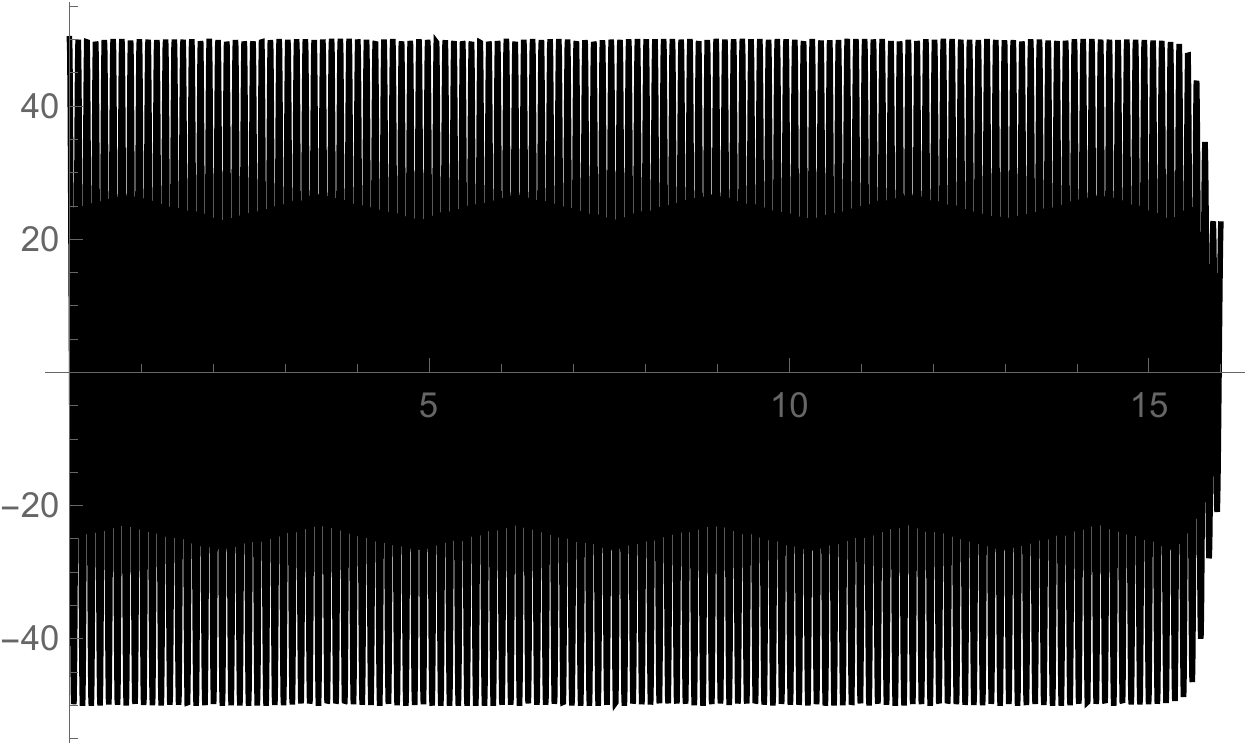}
\;
\includegraphics[scale=0.31]{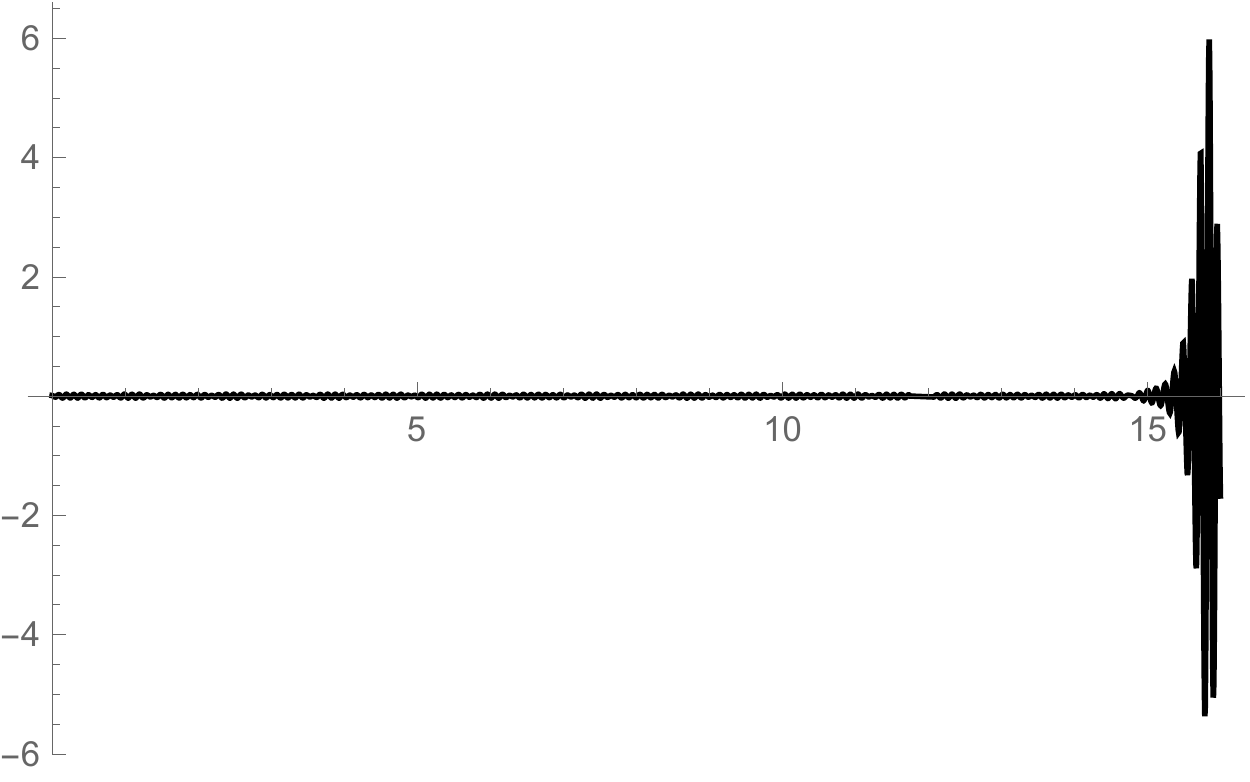}
\;
\includegraphics[scale=0.31]{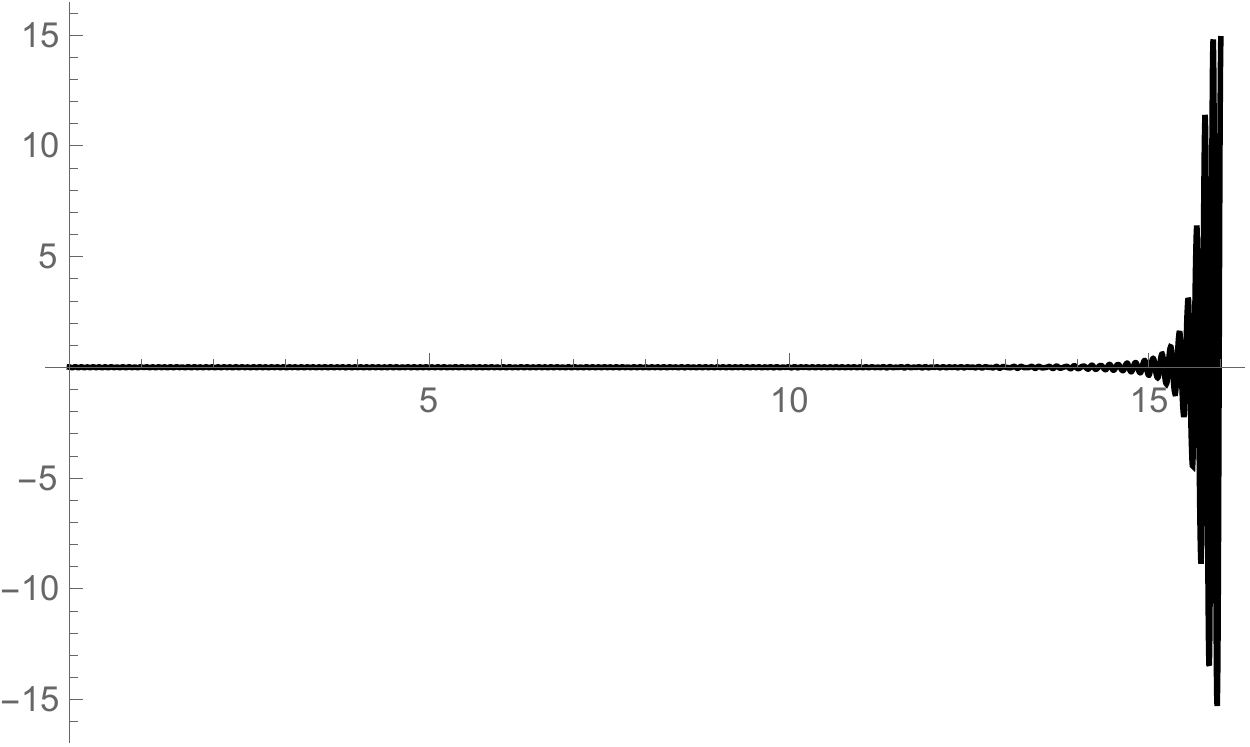}
\vspace{0.3cm}
\\
\includegraphics[scale=0.31]{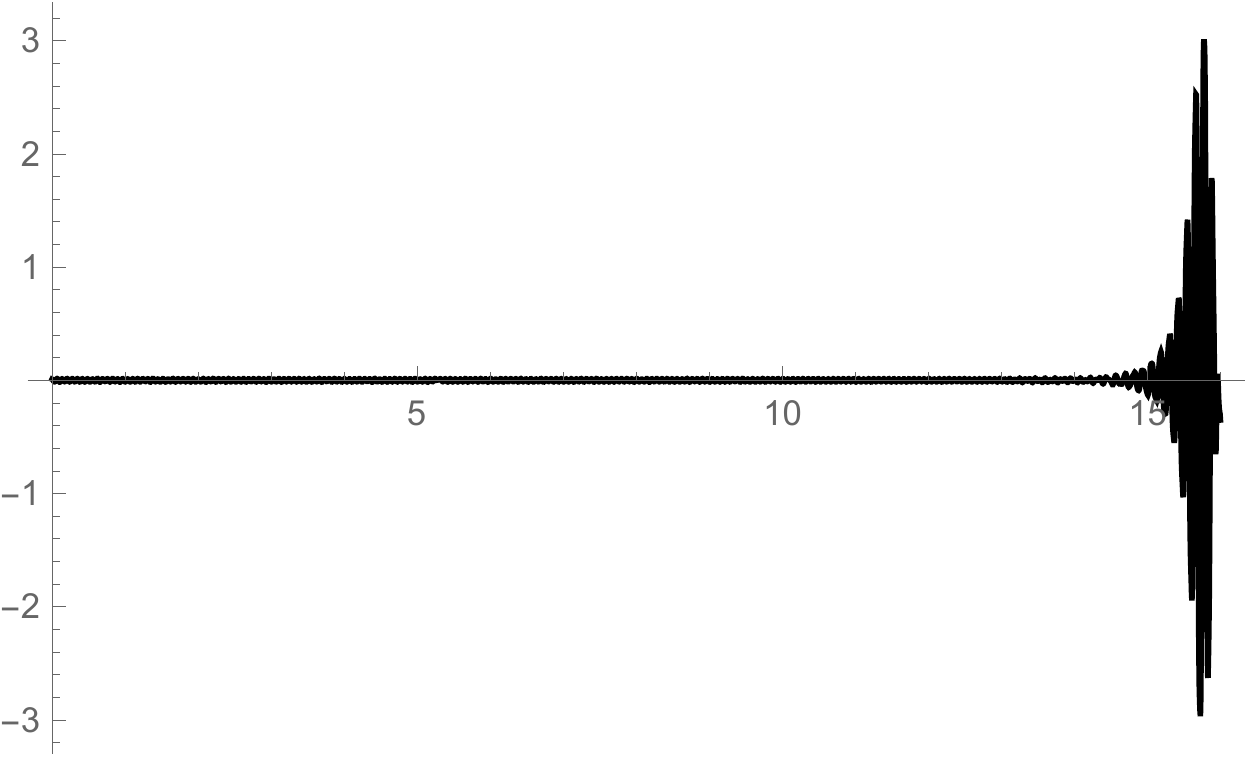}
\;
\includegraphics[scale=0.31]{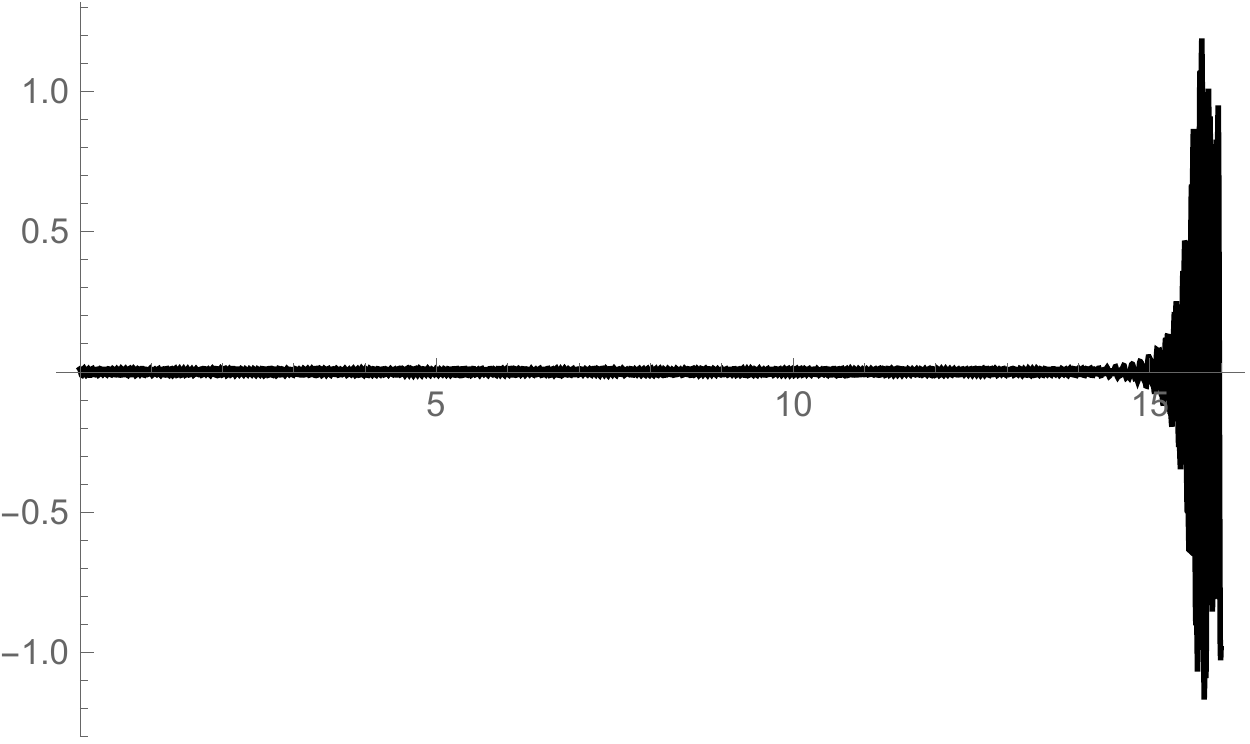}
\;
\includegraphics[scale=0.31]{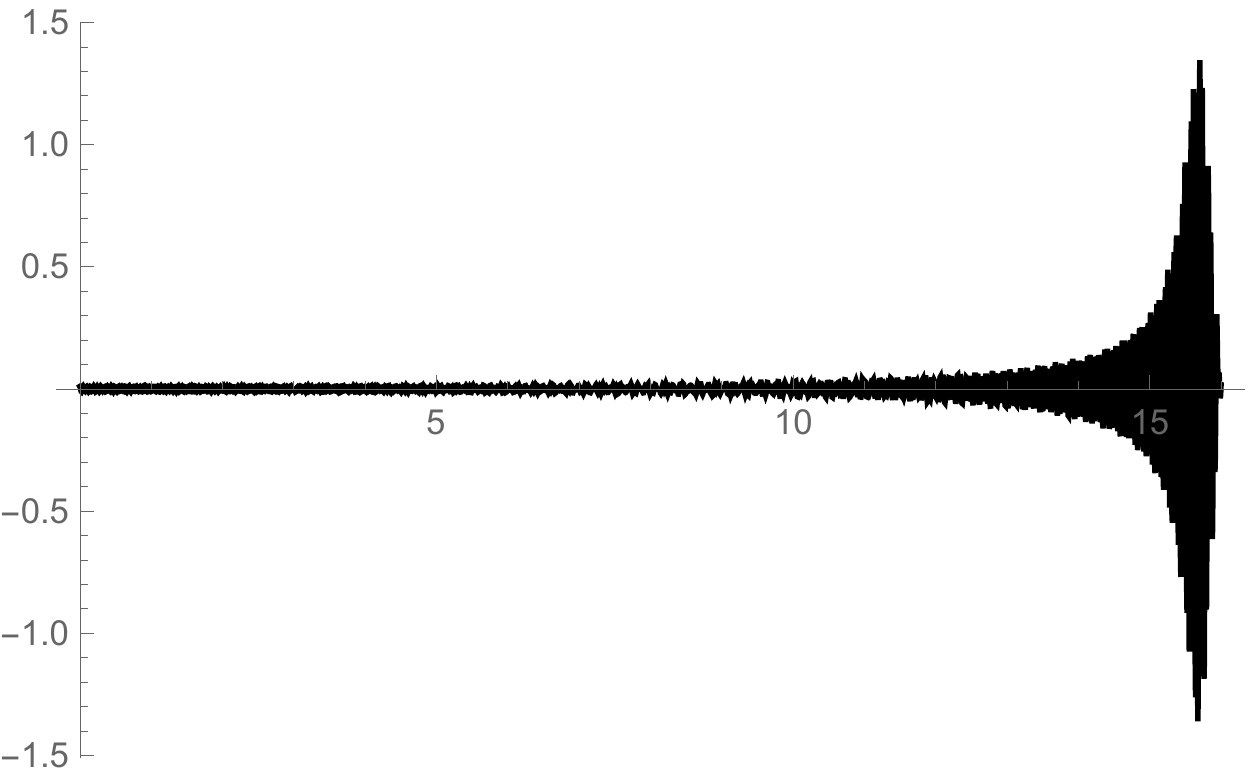}
\;
\includegraphics[scale=0.31]{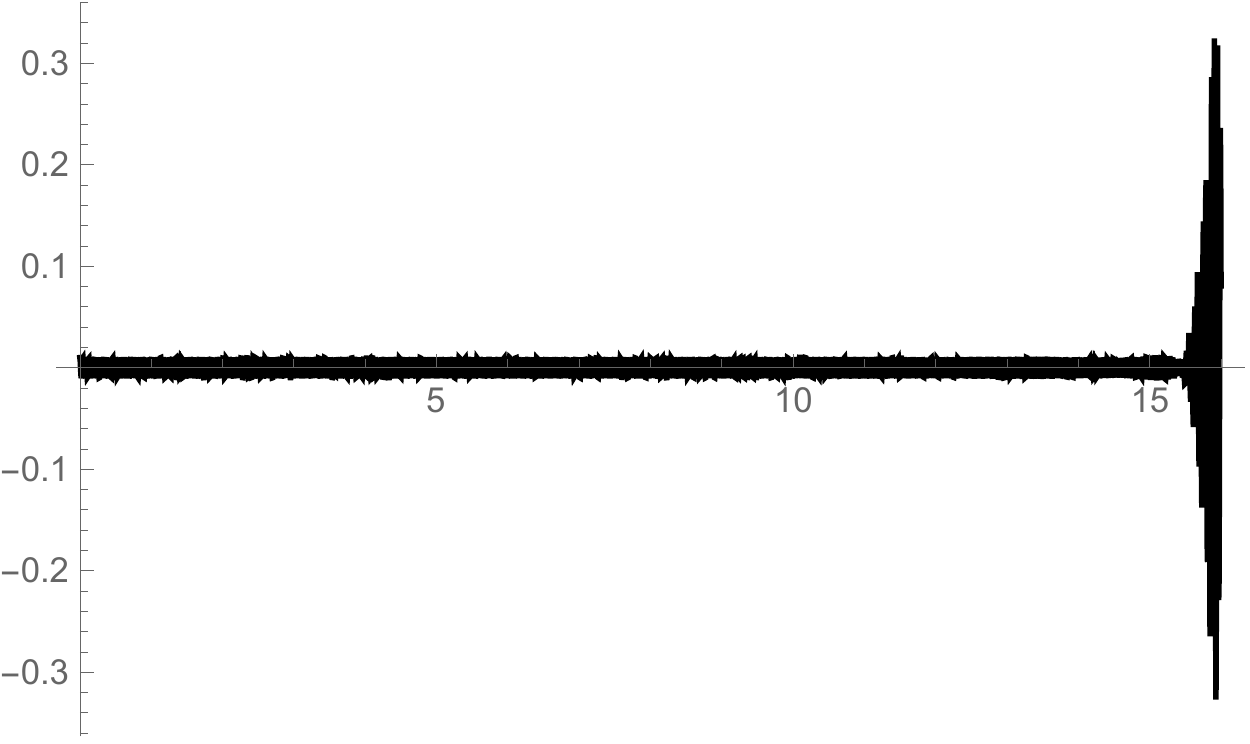}
\caption{We depict the solution of problem \eqref{firstattempt} for $u_0(x)=50.099 \sin (6x) + 0.01 \sum_{n \neq 6} \sin (nx)$ and $u_1(x) \equiv 0$.}
\label{tuttimodicubo2}
\end{figure}

Finally, in Table \ref{soglie} we report some critical instability thresholds $M_j$ (until time $T=16$) observed for each prevailing mode $j=1, \ldots, 6$. Not to overload the content, we choose to report approximate values of the thresholds with possible error of $\pm 0.1$; the table has been constructed by performing a careful analysis (in some cases, varying the initial amplitude with a step of $10^{-3}$) to highlight that there is always a unique Fourier component, indicated in the third line, that starts fulfilling \eqref{finitegrande}. Here we have used again $N=12$ modes and the residual components have been set equal to $0.01$ for $t=0$.
In the last line of the table, we indicate which are the modes which ``resist'' to the energy transfer, namely the ones manifesting regular oscillations of the same amplitude as their initial datum even if the whole system is going towards instability. We remark that the thresholds are obtained for $T=16$ and may vary when changing the observation time $T$.
\par
Notice that, once the threshold is overcome, we do not know if instability always occurs or if, as for linear stability, stable and unstable behaviours are likely to alternate. However, we guess that, for very large energies, the probability for the solutions to be unstable considerably increases.

\begin{table}[!ht]
\begin{center}
\begin{tabular}{|c|c|c|c|c|c|c|c|c|c|c|}
\hline
\textrm{ Prevailing mode } & 1 & 2 & 3 & 4 & 5 & 6 \\ 
\hline
\textrm{ Approximate amplitude threshold } & 13.1 & 6.2 &  13.7 
& 23.4 
 & 32.1
& 50.1 \\ 
\hline
\textrm{ First residual mode fulfilling \eqref{finitegrande} } & 2 & 1 & 2 & 1 & 8 
& 1 \\ 
\hline
\textrm{``Resistent'' mode(s)} & 10/12 
& 
8-12 & 
11/12 & 8 
& 10 & 12 \\ 
\hline
\end{tabular}
\vspace{0.2 cm}
\caption{Instability thresholds of the Galerkin approximation of \eqref{cdbeamcubic}.}\label{soglie}
\end{center}
\end{table}

\section{The case of a positive cubic nonlinearity}\label{poscub}

As an intermediate choice between $f(u)=u^+$ and $f(u)=u^3$, one can take $f(u)=(\alpha_1u+\alpha_2u^3)^+$ as in \cite{gazwang}. After dropping again the linear term ($\alpha_1=0$), we obtain the nonlinearity
$$
f(u)=(u^+)^3\,, 
$$
which will be our object of interest throughout the present section. Explicitly, we consider the problem
\neweq{poscubic}
\left\{\begin{array}{ll}
u_{tt}+u_{xxxx}+(u^+)^3=0\quad & \mbox{for }(x,t)\in(0,\pi)\times(0,\infty)\\
u(0,t)=u(\pi,t)=u_{xx}(0,t)=u_{xx}(\pi,t)=0\quad & \mbox{for }t\in(0,\infty)\\
u(x,0)=u_0(x)\, ,\quad u_t(x,0)=u_1(x)\quad & \mbox{for }x\in(0,\pi)\, .
\end{array}\right.\endeq
Notice that $f(u)=(u^+)^3$ is of class $C^2(\R)$ and satisfies \eq{flip}. By seeking solutions of \eq{poscubic} in the form \eq{general},
system \eqref{infinite} becomes
$$
\ddot{\phi}_n(t)+n^4\phi_n(t)+\frac{2}{\pi}\int_0^\pi\bigg[\Big(\sum_{m=1}^\infty\phi_m(t)\sin(mx)\Big)^+\bigg]^3\sin(nx)\, dx=0
\qquad(n=1,...,\infty)
$$
and the main difficulty is again to analyze the contributions of the integral term.
\par
We compare the properties of \eq{poscubic} with those of \eq{cdbeampositive}. Proposition \ref{homogeneity} no longer
holds, which means that \eq{poscubic} is ``more nonlinear'' than \eq{cdbeampositive}. Theorem \ref{evenodd} instead holds, but in a weaker form, since only the first statement remains true.
\begin{theorem}\label{evenodd2}
If $\{a_n\}_{_n}\in\ell^2_4$, $\{b_n\}_{_n}\in\ell^2_2$, and
$$
u_0(x)=\sum_{n=0}^\infty a_n\sin\big((2n+1)x\big)\ ,\quad u_1(x)=\sum_{n=0}^\infty b_n\sin\big((2n+1)x\big)\,,
$$
then there exist functions $\phi_{2n+1}\in C^2(\R_+)$ such that the solution of \eqref{poscubic} is given by \eqref{onlyodd}.
\end{theorem}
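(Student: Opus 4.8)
The plan is to mirror, almost verbatim, the argument of Theorem \ref{evenodd}, with the degree‑one positive part replaced by the cube of the positive part. Writing the solution of \eqref{poscubic} in the form \eqref{general} and using the multiple‑angle formula \eqref{formula}, one gets
$$
u(x,t)=\sin(x)\sum_{n=1}^\infty\phi_n(t)P_{n-1}\big(\cos(x)\big),
$$
with $P_{n-1}$ a polynomial of degree $n-1$ obeying the parity rule \eqref{rule}. Since $\sin(x)>0$ on $(0,\pi)$, the positive part is $u^+(x,t)=\sin(x)\big(\sum_n\phi_n(t)P_{n-1}(\cos x)\big)^+$, whence
$$
\big(u^+(x,t)\big)^3=\sin^3(x)\bigg[\Big(\sum_{n=1}^\infty\phi_n(t)P_{n-1}\big(\cos(x)\big)\Big)^{\!+}\bigg]^3,
$$
and the equation for the $n$-th Fourier coefficient reads
$$
\ddot{\phi}_n(t)+n^4\phi_n(t)+\frac{2}{\pi}\int_0^\pi\sin^3(x)\bigg[\Big(\sum_{m=1}^\infty\phi_m(t)P_{m-1}\big(\cos(x)\big)\Big)^{\!+}\bigg]^3\sin(nx)\,dx=0 .
$$

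By the uniqueness statement in Theorem \ref{exist}, it suffices to produce a solution of the form \eqref{onlyodd}. So I would posit $\phi_{2n}\equiv0$ for all $n$ and verify consistency. Under this ansatz the sum $\sum_m\phi_m(t)P_{m-1}(\cos x)=\sum_{m\ge0}\phi_{2m+1}(t)P_{2m}(\cos x)$ involves only the polynomials $P_{2m}$, which by \eqref{rule} are symmetric with respect to $x=\tfrac\pi2$; hence the whole sum is symmetric about $x=\tfrac\pi2$. Taking the positive part and then the cube preserves this pointwise symmetry, and multiplication by $\sin^3(x)$ — itself symmetric about $x=\tfrac\pi2$ — keeps it, so $(u^+)^3$ is symmetric with respect to $x=\tfrac\pi2$. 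For every even integer $2k$ the function $\sin(2kx)$ is skew‑symmetric about $x=\tfrac\pi2$, so $\int_0^\pi(u^+)^3\sin(2kx)\,dx=0$; consequently the $(2k)$-th equation reduces to $\ddot\phi_{2k}+(2k)^4\phi_{2k}=0$ which, together with the vanishing initial data $\phi_{2k}(0)=\dot\phi_{2k}(0)=0$, forces $\phi_{2k}\equiv0$, in agreement with the ansatz. The remaining equations for the odd coefficients $\{\phi_{2m+1}\}$ then form a self‑contained, well‑posed system — equivalently, running the Galerkin construction of Theorem \ref{exist} within the closed subspace of $H^2_*(0,\pi)$ spanned by the odd modes yields a solution of \eqref{poscubic} of the form \eqref{onlyodd}. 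Uniqueness concludes.

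The main thing to be careful about is the same as in the proof of Theorem \ref{evenodd}: the analytic justification of the term‑by‑term manipulations of the series (convergence in the appropriate $\ell^2_m$/Sobolev scale, exchange of sum and integral), which proceeds exactly as there and which I would simply invoke; the symmetry manipulations themselves are purely pointwise, the key fact being that both $s\mapsto s^+$ and $s\mapsto s^3$ commute with the reflection $x\mapsto\pi-x$. Finally I would point out why the second assertion of Theorem \ref{evenodd} has no analogue here: if $u_0,u_1$ are proportional to $\sin x$, then $f(\phi_1\sin x)=(\phi_1^+)^3\sin^3 x=(\phi_1^+)^3\big(\tfrac34\sin x-\tfrac14\sin(3x)\big)$ has a nonzero projection onto the third mode, so a datum concentrated on a single mode does not stay concentrated on that mode — this is exactly the failure of degree‑one homogeneity that also makes Proposition \ref{homogeneity} break down for \eqref{poscubic}.
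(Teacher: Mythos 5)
Your argument is correct and is essentially the proof the paper intends: the paper explicitly omits the proof of Theorem \ref{evenodd2}, stating it is similar to that of Theorem \ref{evenodd}, and your reflection argument about $x=\tfrac\pi2$ (symmetry of the odd-mode sum, preservation of symmetry under $s\mapsto s^+$ and $s\mapsto s^3$, skew-symmetry of $\sin(2kx)$, then uniqueness) is exactly that adaptation. Your closing remark on why the single-mode statement fails --- via $\sin^3x=\tfrac34\sin x-\tfrac14\sin(3x)$ --- also matches the paper's own comment following the theorem.
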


The proof of Theorem \ref{evenodd2} is similar to that of Theorem \ref{evenodd} (see Section \ref{casepos}) and therefore we omit it.
However, since a function of the form $u(x,t)=\phi_1(t)\sin(x)$ does not solve \eq{poscubic}, the second part of Theorem \ref{evenodd} is not true for \eqref{poscubic}. In Figure \ref{sempliceposcubo}, we notice that the energy is distributed also on the third mode, contrary to what happens for $f(u)=u^+$: this may be seen as a physiological effect due to the cubic power in the nonlinearity. On the contrary, the second mode is identically $0$ by Theorem \ref{evenodd2}. In Figure \ref{sempliceposcubo2}, we see that the energy is immediately spread from the even mode onto all the other ones as in the case of the positive part.

\begin{figure}[!h]
\center
\includegraphics[scale=0.42]{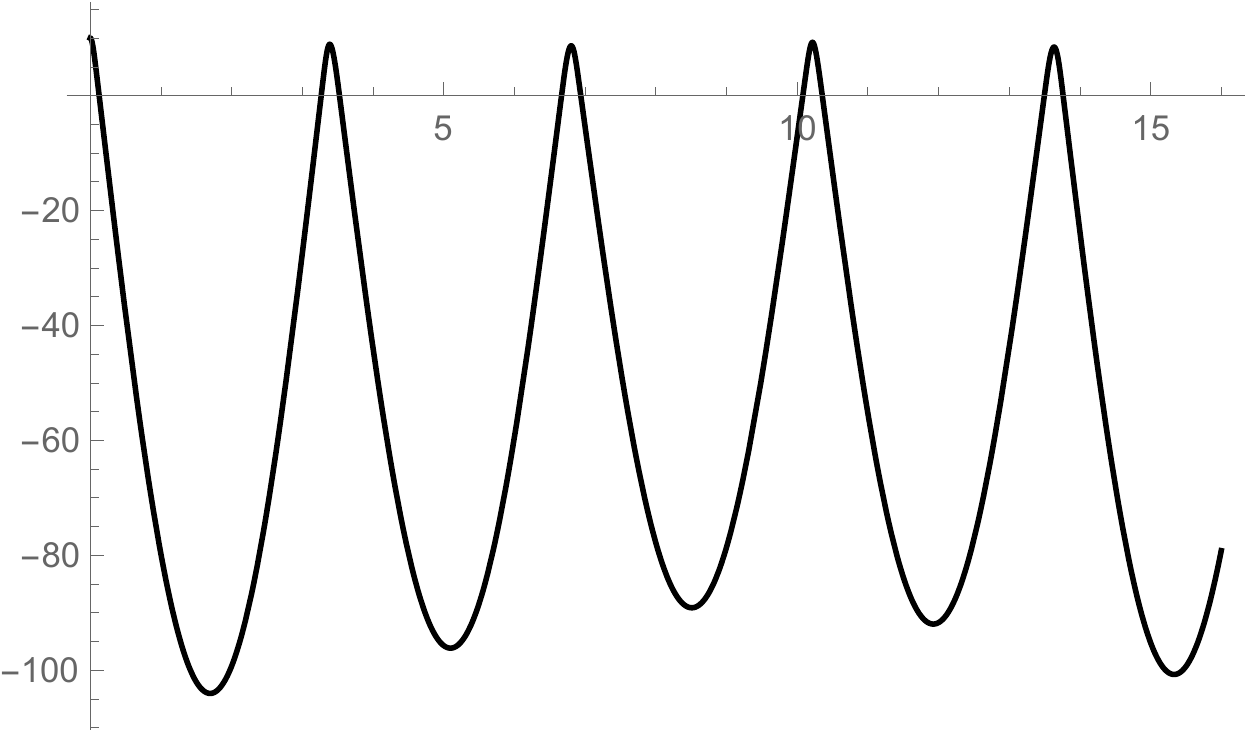}
\quad
\includegraphics[scale=0.42]{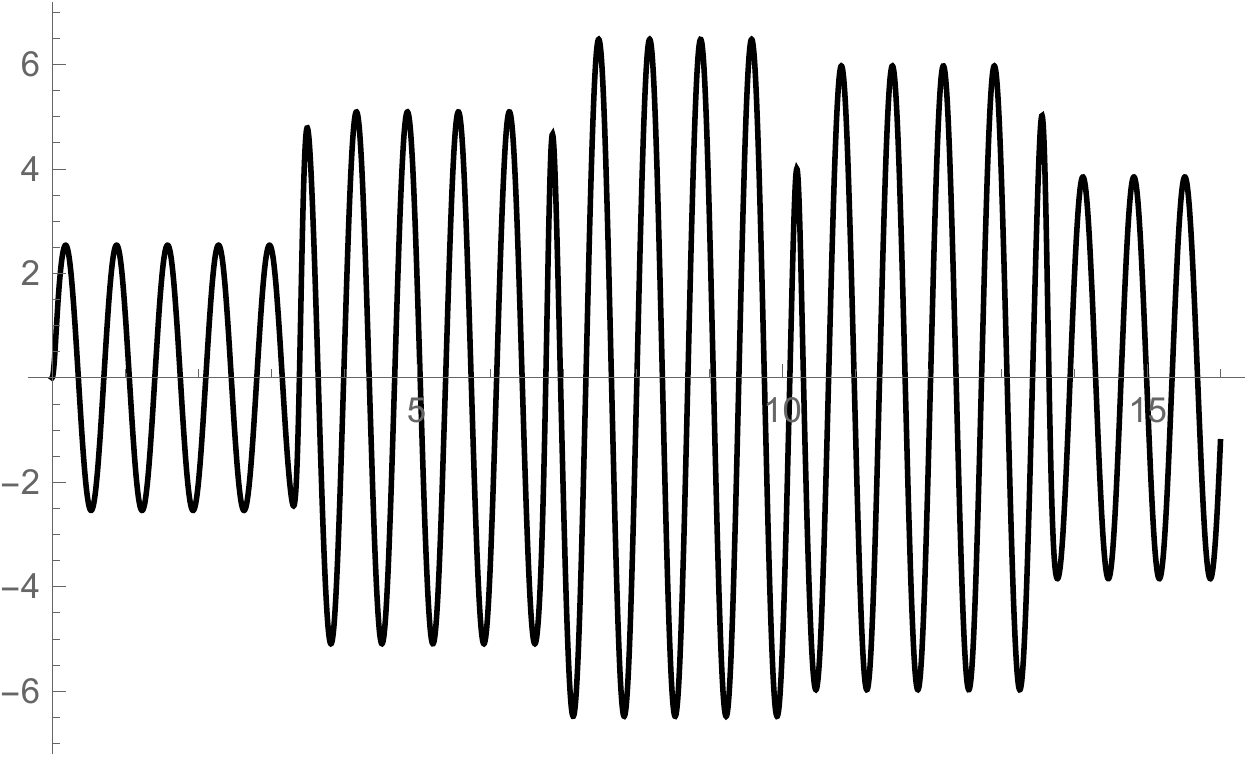}
\caption{Plot of the Fourier components $\varphi_1^3$ and $\varphi_3^3$ for problem \eqref{poscubic}, with $\varphi_1^3(0)=10$, $\varphi_2^3(0)=\varphi_3^3(0)=0$, on the time interval $[0, 16]$. }
\label{sempliceposcubo}
\vspace{0.8 cm}
\includegraphics[scale=0.42]{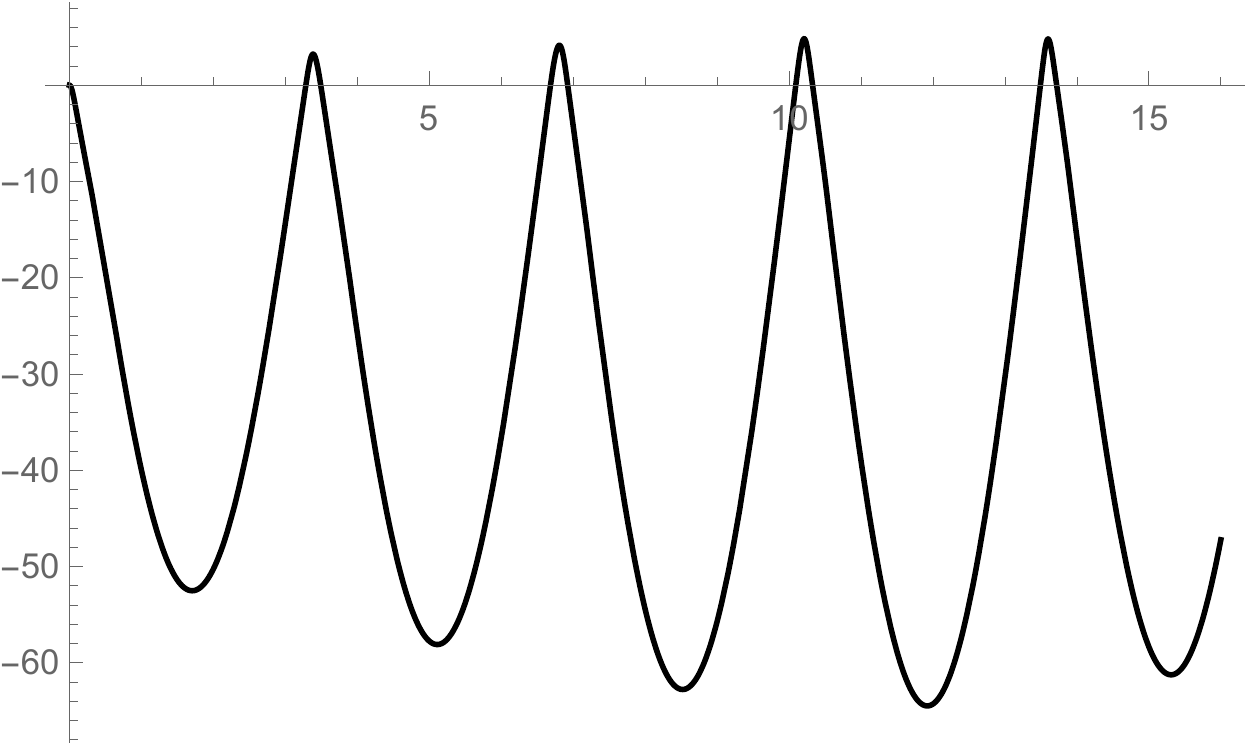}
\quad
\includegraphics[scale=0.42]{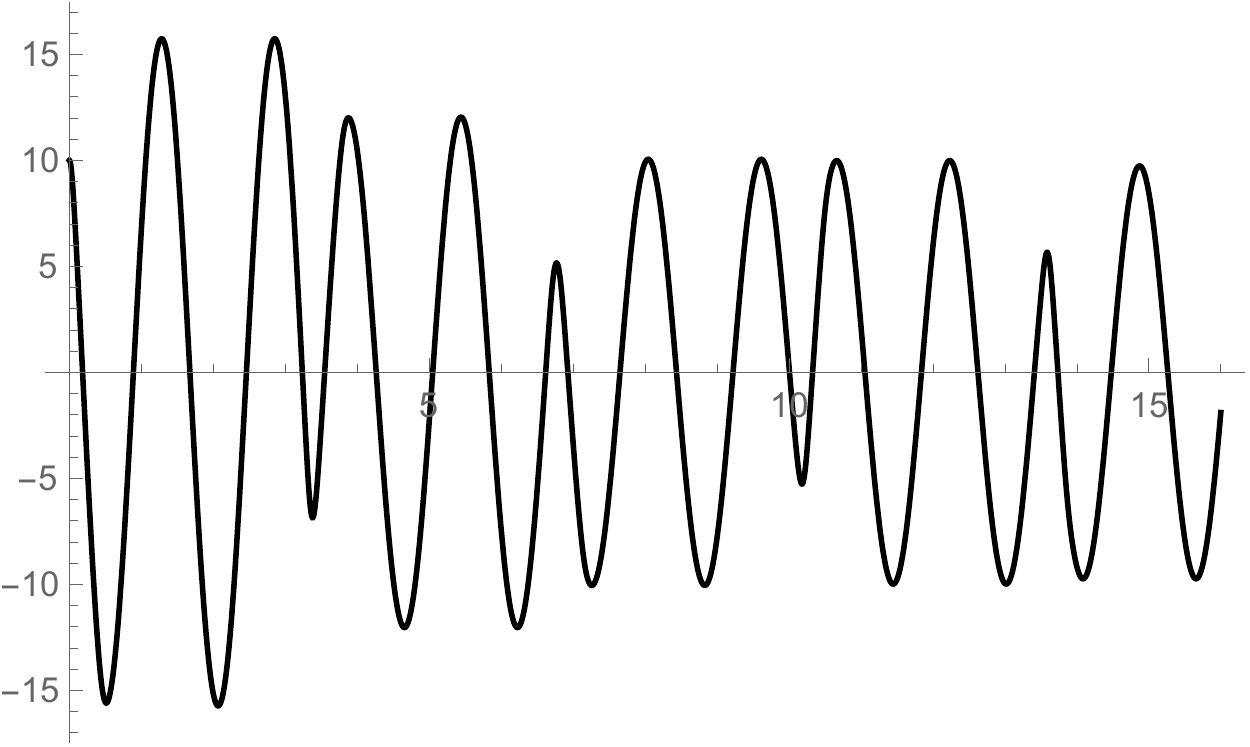}
\quad
\includegraphics[scale=0.42]{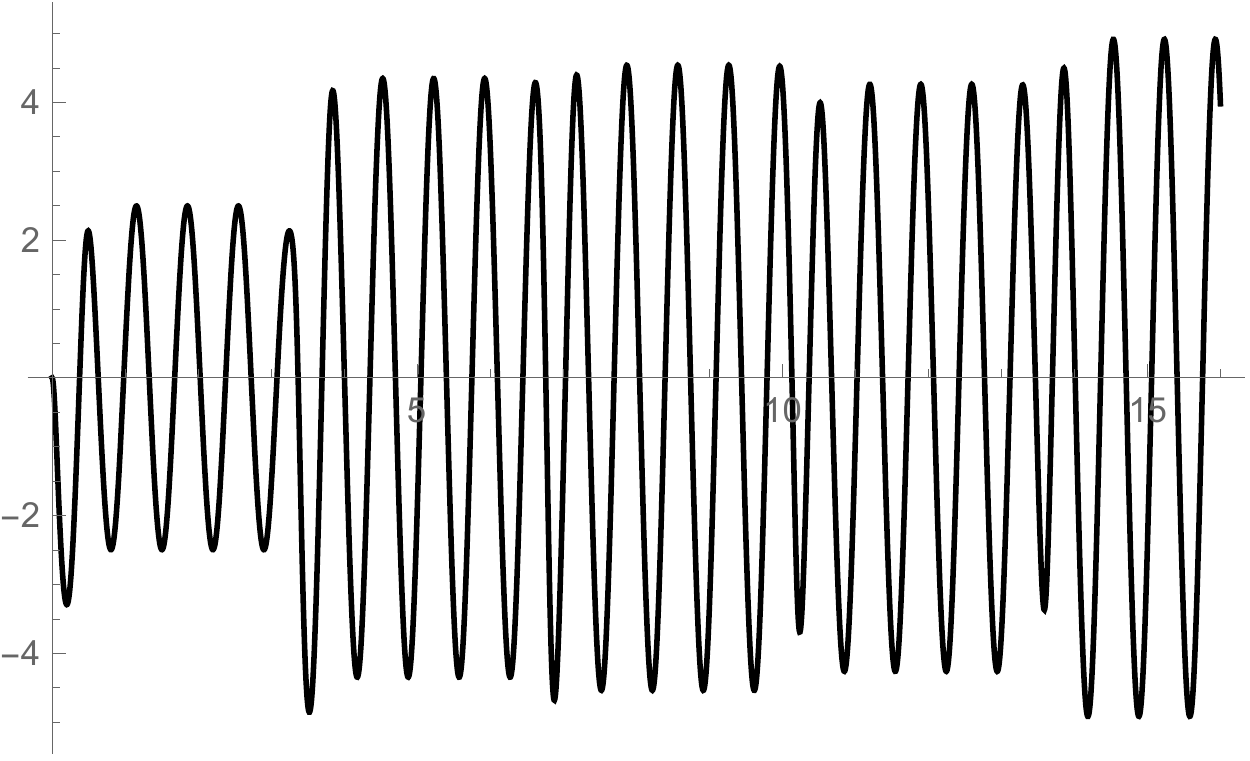}
\caption{Plot of the Fourier components $\varphi_1^3, \varphi_2^3$ and $\varphi_3^3$ for problem \eqref{poscubic}, with $\varphi_2^3(0)=10$, $\varphi_1^3(0)=\varphi_3^3(0)=0$, on the time interval $[0, 16]$.}
\label{sempliceposcubo2}
\end{figure}

\par
Theorems \ref{highermodes} and \ref{secondmode} hold instead in the same way as for \eq{cdbeampositive}, with similar proofs which we omit. Namely, we have the following.

\begin{theorem}\label{highermodesppc}
Assume that \eqref{nontrivial} is a solution of \eqref{poscubic};
then $\phi_1(t)\not\equiv0$.
Moreover, all the odd modes of the solution of \eqref{poscubic} with initial conditions \eqref{potential+} have a nontrivial coefficient for $t>0$.
\end{theorem}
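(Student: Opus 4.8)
The plan is to run, almost verbatim, the arguments proving Theorems~\ref{highermodes} and~\ref{secondmode}, the only structural input being that $f(s)=(s^+)^3$ is nonnegative on $\R$ and vanishes precisely for $s\le 0$, exactly as $s^+$ does.

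\emph{First assertion.} I would multiply the equation in \eqref{poscubic} by $\sin(x)$ and integrate over $(0,\pi)$, which (recalling $1^4=1$) gives
$$
\ddot\phi_1(t)+\phi_1(t)+\frac{2}{\pi}\int_0^\pi\Big[\Big(\sum_{m=1}^\infty\phi_m(t)\sin(mx)\Big)^+\Big]^3\sin(x)\,dx=0.
$$
Arguing by contradiction, suppose $\phi_1\equiv 0$; then $\ddot\phi_1\equiv 0$, so the integral above vanishes for every $t\ge 0$. Since $\sin(x)>0$ on $(0,\pi)$ and the square bracket is nonnegative, the integrand is nonnegative, hence it is zero a.e.; as $\sin(x)>0$ this forces $\big(\sum_{m\ge 2}\phi_m(t)\sin(mx)\big)^+=0$ a.e.\ in $x$, and by continuity of $x\mapsto u(x,t)$ (which lies in $H^2_*(0,\pi)\hookrightarrow C^0[0,\pi]$) we conclude $\sum_{m\ge 2}\phi_m(t)\sin(mx)\le 0$ for all $(x,t)\in(0,\pi)\times\R_+$. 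On the other hand, by orthogonality $\int_0^\pi\big(\sum_{m\ge 2}\phi_m(t)\sin(mx)\big)\sin(x)\,dx=0$, so this nonpositive continuous (in $x$) function has zero integral against the strictly positive weight $\sin(x)$, and therefore vanishes identically. Together with $\phi_1\equiv 0$ this means $u\equiv 0$, a contradiction.

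\emph{Second assertion.} With $u_0(x)=\sin(2x)$ and $u_1\equiv 0$ we have $\phi_2(0)=1$, $\phi_n(0)=0$ for $n\ne 2$, and $\dot\phi_n(0)=0$ for all $n$; since $f\in C^2(\R)$, the strong solution of \eqref{poscubic} has Fourier coefficients $\phi_n\in C^2(\R_+)$, so it suffices to show $\ddot\phi_n(0)\ne 0$ for every odd $n$. Evaluating the $n$-th equation of \eqref{infinite} with $f(u)=(u^+)^3$ at $t=0$, and using that $u(\cdot,0)=\sin(2x)$ is positive exactly on $(0,\tfrac\pi2)$, one gets
$$
\ddot\phi_n(0)=-n^4\phi_n(0)-\frac{2}{\pi}\int_0^{\pi/2}\sin^3(2x)\sin(nx)\,dx.
$$
Writing $\sin^3(2x)=\tfrac14\big(3\sin(2x)-\sin(6x)\big)$ and using the elementary primitives of $\cos(kx)$ on $(0,\tfrac\pi2)$, a direct computation yields, for odd $n$,
$$
\int_0^{\pi/2}\sin^3(2x)\sin(nx)\,dx=\frac{48\,\sin(n\pi/2)}{(n^2-4)(n^2-36)}\neq 0,
$$
because $\sin(n\pi/2)=\pm 1$ while $n^2-4$ and $n^2-36$ never vanish when $n$ is odd. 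Hence $\ddot\phi_n(0)\ne 0$ (here $\phi_n(0)=0$ for odd $n$), and combined with $\phi_n(0)=\dot\phi_n(0)=0$ this gives $\phi_n(t)=\tfrac12\ddot\phi_n(0)\,t^2+o(t^2)$, so $\phi_n\not\equiv 0$ for $t>0$.

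The first part is routine, being a line-by-line transcription of the proof of Theorem~\ref{highermodes}. The only point requiring a little care is the second part: one must check that the explicit integral does not accidentally vanish for some odd $n$, and this is exactly where parity enters — oddness of $n$ guarantees both $\sin(n\pi/2)\ne 0$ and $(n^2-4)(n^2-36)\ne 0$, ruling out cancellation. (For even $n\notin\{2,6\}$ the same computation gives $\ddot\phi_n(0)=0$, in agreement with the fact that the theorem asserts the activation of the odd modes only, consistently with Theorem~\ref{evenodd2}.)
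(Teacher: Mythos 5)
Your proposal is correct and follows exactly the route the paper itself indicates (and omits): the first assertion is the sign/orthogonality argument of Theorem \ref{highermodes} transplanted verbatim, and the second is the $\ddot\phi_n(0)$ computation of Theorem \ref{secondmode} with $\sin(2x)$ replaced by $\sin^3(2x)$ on the positivity set $(0,\pi/2)$. Your explicit value $\int_0^{\pi/2}\sin^3(2x)\sin(nx)\,dx=48\sin(n\pi/2)/\bigl((n^2-4)(n^2-36)\bigr)$ checks out (e.g.\ it gives $16/35$ for $n=1$), so the nonvanishing for odd $n$ is verified and the argument is complete.
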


In Figure \ref{plotsposcubo}, we highlight a ``partial'' instability for the autonomous Galerkin system ($N=5$) associated with problem \eqref{poscubic}, for initial data $u_0(x) \approx 50 \sin x$ and $u_1(x) \equiv 0$.
While the growth of the third and the fifth mode is due to a physiological absorption of energy, similarly to Proposition \ref{influence}, the growth of the fourth mode satisfies the second condition in \eqref{finitegrande} for $T = 42$.
Instead, the first condition in \eqref{finitegrande} is not satisfied because the first mode considerably increases its amplitude. However, only the negative part of the first mode increases, thereby not contributing at all to the potential energy. In fact, this is what happens in most of our experiments, independently of the prevailing mode $j$, and is the reason why the first condition in \eqref{finitegrande} is probably never satisfied. We thus conjecture that, as for the case $f(u)=\mu u^+$, a full instability does not appear in this case. There are no physical systems where such large displacements do not contribute to the energy, so that the interest of this simulation is mostly theoretical, aiming at describing the possible behaviours of the beam under different choices of the nonlinearity.
\begin{figure}[!h]
\center
\includegraphics[scale=0.42]{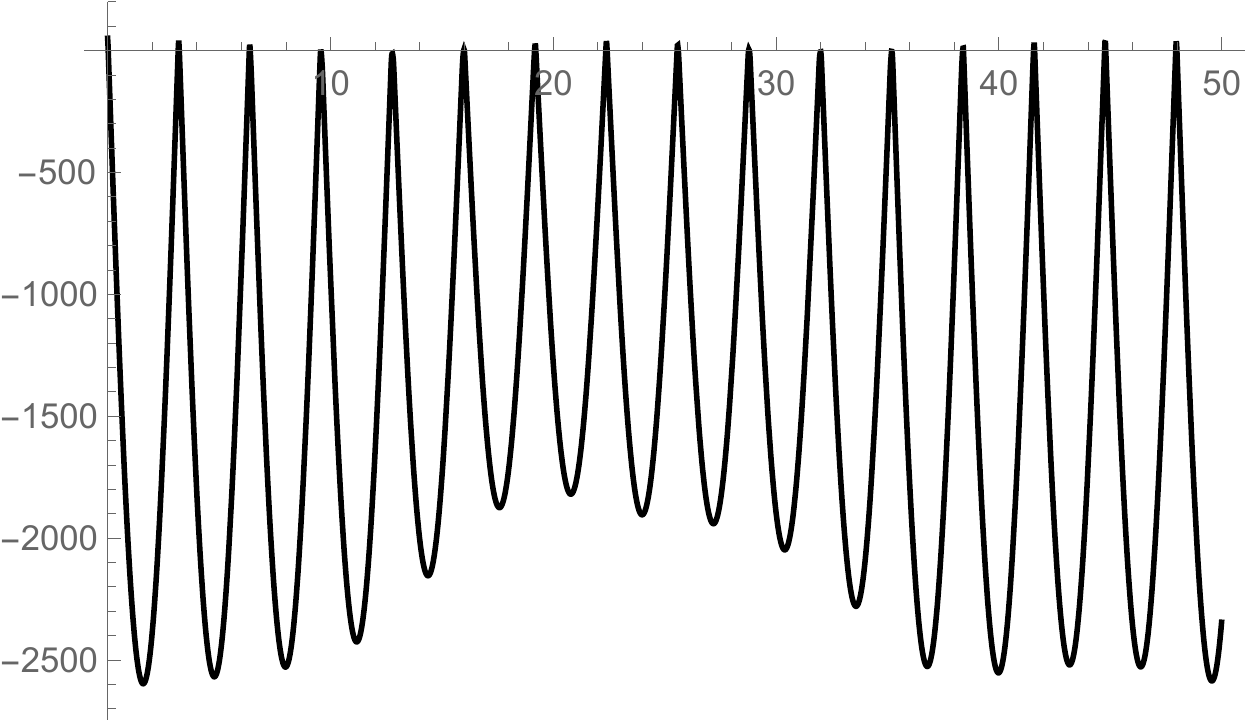}
\quad
\includegraphics[scale=0.42]{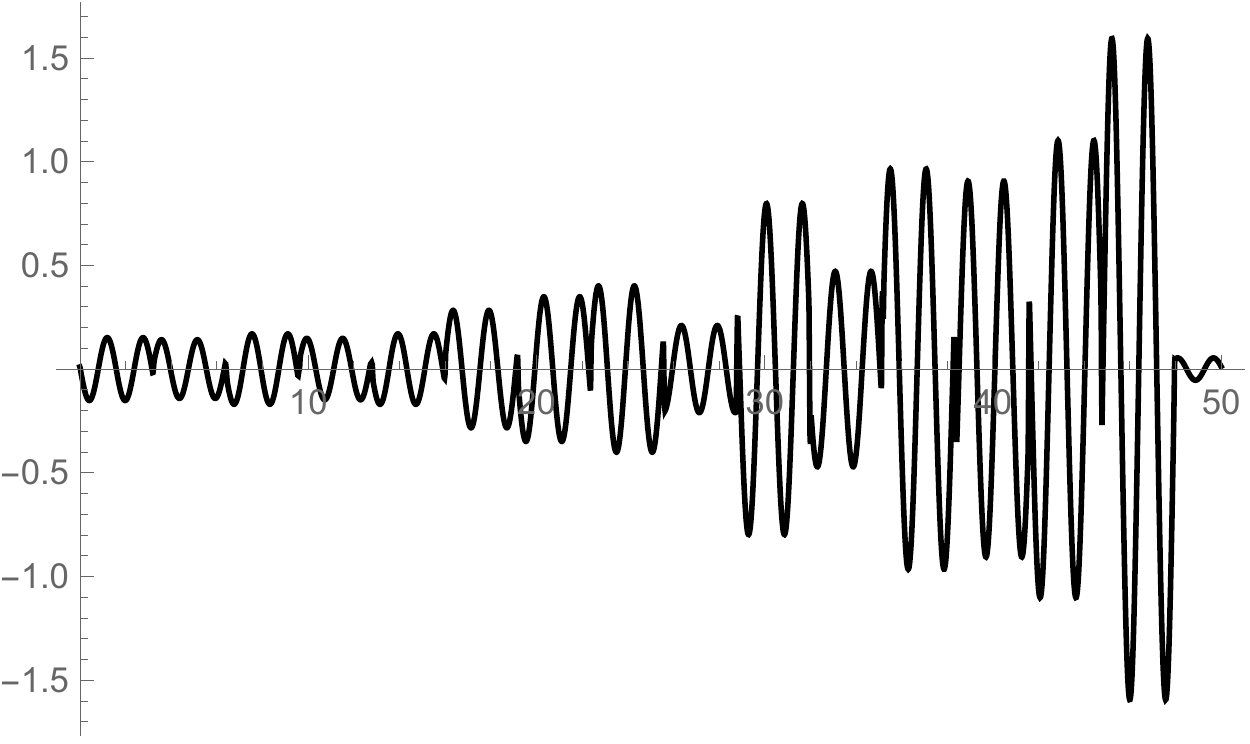}
\\
\vspace{0.3cm}
\includegraphics[scale=0.42]{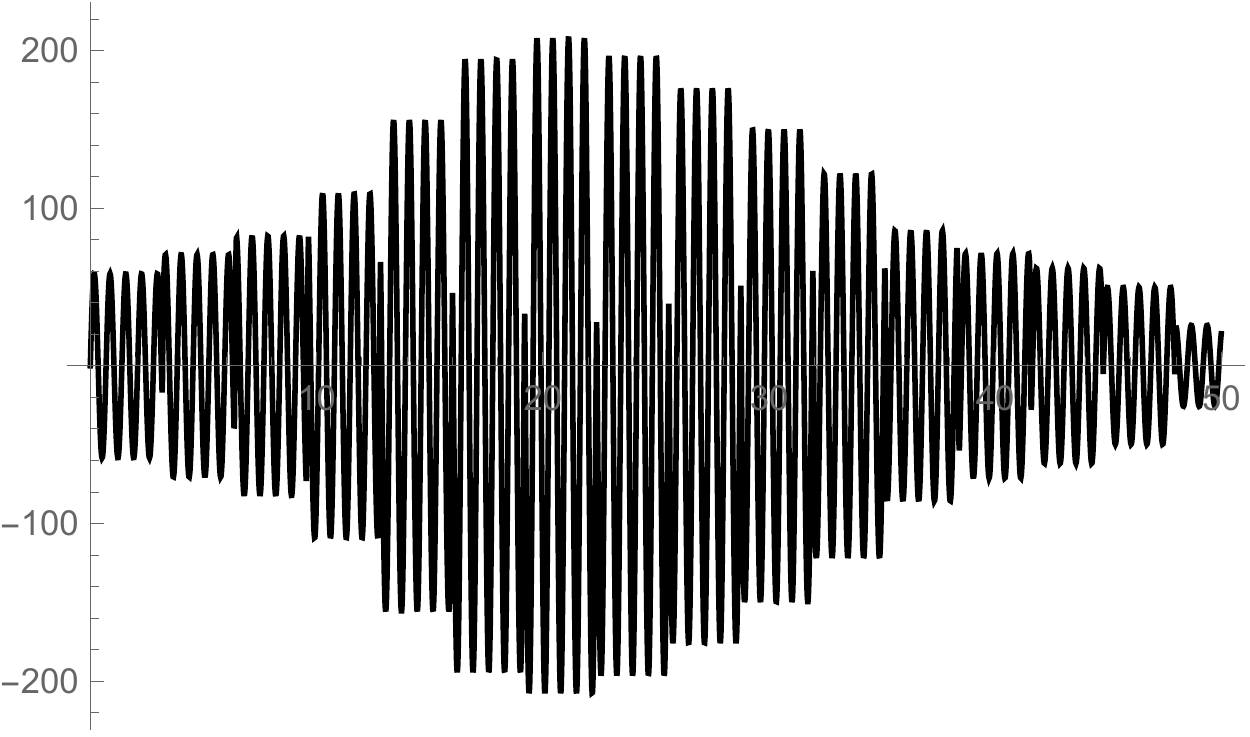}
\quad
\includegraphics[scale=0.42]{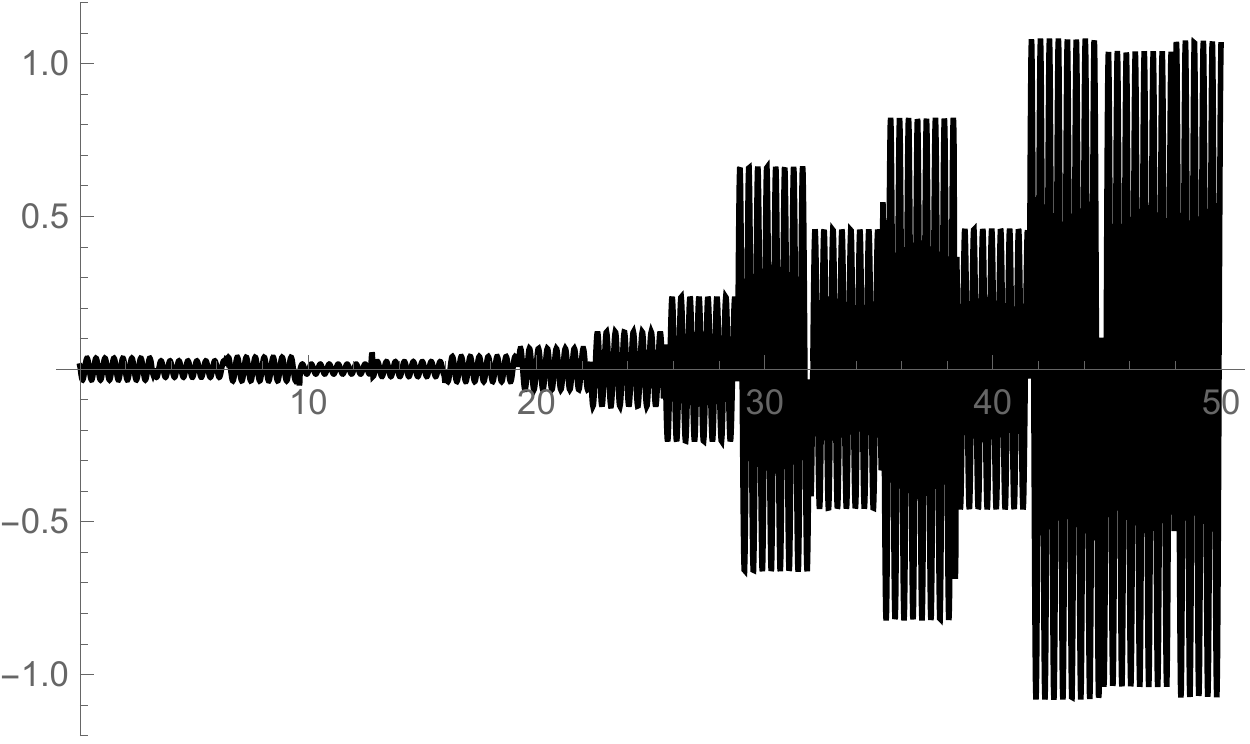}
\quad
\includegraphics[scale=0.42]{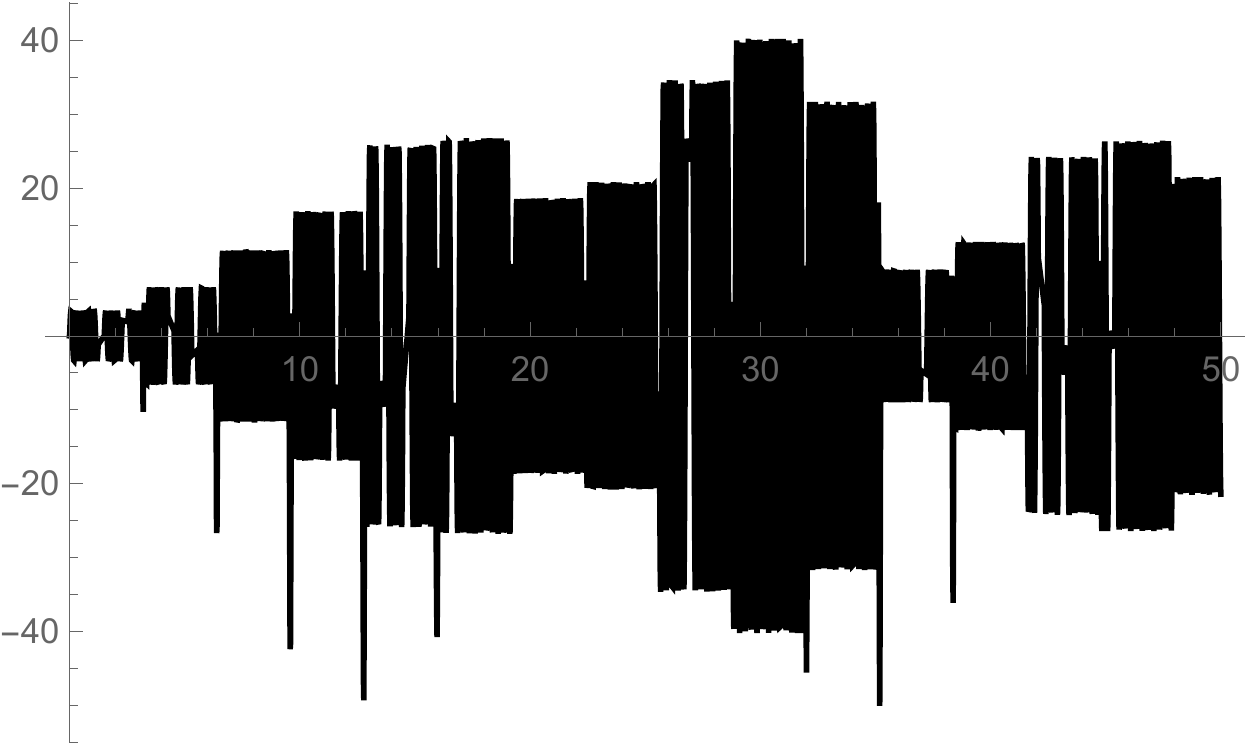}
\caption{The plots of $\varphi_1^5, \ldots, \varphi_5^5$ for problem \eqref{poscubic} on the time interval $[0, 50]$, with $\varphi_1^5(0)=50$, $\varphi_n^5(0)=0.01$ for $n > 1$ and $\dot{\varphi}_n(0) = 0$ for every $n=1, \ldots, 5$.}
\label{plotsposcubo}
\end{figure}

Summarizing, the numerical responses suggest that problem \eqref{poscubic} may be seen as an intermediate case between the positive part and the cubic nonlinearity.

\section{Quantitative estimates of the solutions}\label{dimostrazione}

In this section, we prove a more general version of Theorem \ref{qualitativo} which also provides quantitative estimates on each single Fourier component, see Theorem \ref{approximation} below. We have decided to postpone here such a statement in order not to overload the previous discussion.
\par
We first recall a generalization of the Poincar\'e inequality.
\begin{lemma}\label{Poinc}
Let $N$ be a nonnegative integer and $u \in H^2_*(0, \pi)$ be such that $P_N u = 0$, being $P_N$ the projector onto the space spanned by the first $N$ modes, as in \eqref{proiettori}.
Then,
\begin{equation}\label{stimau2}
\Vert u \Vert_{L^2(0, \pi)} \leq \frac{1}{(N+1)^2} \Vert u'' \Vert_{L^2(0, \pi)}.
\end{equation}
\end{lemma}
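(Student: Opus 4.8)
The natural approach is to pass to Fourier coefficients, exactly as was done for the spaces $\ell^2_m$ earlier in the paper. First I would write $u \in H^2_*(0,\pi)$ in its sine series $u(x) = \sum_{n=1}^\infty c_n \sin(nx)$, with $\{c_n\}_n \in \ell^2_2$, so that the series $\sum_n n^4 c_n^2$ converges and $u'' = -\sum_n n^2 c_n \sin(nx)$ in $L^2(0,\pi)$ (termwise differentiation is justified precisely by $u \in H^2_*$). The hypothesis $P_N u = 0$ says, in view of \eqref{proiettori}, that $c_1 = c_2 = \cdots = c_N = 0$, i.e. the expansion of $u$ runs only over indices $n \geq N+1$.

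Next I would apply Parseval's identity on $(0,\pi)$, using $\int_0^\pi \sin^2(nx)\,dx = \pi/2$: this gives
$$
\Vert u \Vert_{L^2(0,\pi)}^2 = \frac{\pi}{2}\sum_{n > N} c_n^2, \qquad \Vert u'' \Vert_{L^2(0,\pi)}^2 = \frac{\pi}{2}\sum_{n > N} n^4 c_n^2 .
$$
Since every index appearing satisfies $n \geq N+1$, we have $n^4 \geq (N+1)^4$ term by term, hence
$$
\Vert u'' \Vert_{L^2(0,\pi)}^2 = \frac{\pi}{2}\sum_{n > N} n^4 c_n^2 \;\geq\; (N+1)^4 \, \frac{\pi}{2}\sum_{n > N} c_n^2 \;=\; (N+1)^4 \, \Vert u \Vert_{L^2(0,\pi)}^2 .
$$
Taking square roots yields \eqref{stimau2}.

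There is essentially no obstacle here: the only point requiring a word of care is the legitimacy of differentiating the series termwise and identifying the Fourier coefficients of $u''$ as $-n^2 c_n$, which is immediate from the characterization $v \in H^2_*(0,\pi) \iff \{c_n\}_n \in \ell^2_2$ recalled before Theorem~\ref{exist} (the boundary conditions built into $H^2_*$ are exactly what kills the boundary terms in integration by parts, so no extra terms appear). One may also note that the constant $1/(N+1)^2$ is sharp, being attained in the limit by sequences concentrating on the single mode $n = N+1$; for the case $N=0$ this recovers the classical Poincaré inequality $\Vert u \Vert_{L^2} \leq \Vert u'' \Vert_{L^2}$ on $H^2_*(0,\pi)$.
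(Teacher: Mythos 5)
Your proof is correct and is precisely the standard spectral/Parseval argument the paper has in mind: the authors omit the proof as ``standard,'' and the same termwise estimate ($n\ge N+1$ implies $n^4\ge(N+1)^4$) is exactly what they use to derive \eqref{resto} and \eqref{stimaproiez}. Nothing to add.
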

The proof is standard and is here omitted; in the limit case $N=0$, \eqref{stimau2} is just the Poincar\'e inequality. We also recall the inequality
\begin{equation}\label{stimauinfty}
\Vert u \Vert_{L^\infty(0, \pi)} \leq \Vert u'' \Vert_{L^2(0, \pi)}\qquad \forall u\in H^2_*(0, \pi)\, ,
\end{equation}
which will be used in the sequel.
\par
Let us now introduce some notation. For a nonlinearity $f$ as in \eqref{flip}, we denote by $L(K)$ its Lipschitz constant on the interval $[-K,K]$, namely
$$
L(K)= \sup_{\vert u \vert, \vert v \vert \leq K \atop u \neq v} \Big\vert \frac{f(u) - f(v)}{u-v} \Big\vert\, ;
$$
since $f$ is increasing, we have
\begin{equation}\label{derivata}
\Big\vert f'(\xi) - \frac{L(K)}{2} \Big\vert \leq \frac{L(K)}{2} \quad \mbox{ for all } \vert \xi \vert \leq K\, .
\end{equation}
Fixed $T > 0$, we also introduce the constant
$$
M=\sqrt{2E(0)} + \int_0^T \Vert g(s) \Vert_{L^2(0, \pi)} \, ds,
$$
where $E(t)$ is as in \eqref{energy}.
This constant is needed to obtain uniform bounds on the solutions; for forcing terms of the form \eqref{g}, we have
$$
M
\leq
\sqrt{2E(0)} + \sqrt{2\pi}\frac{\alpha}{\gamma} \mathcal{I}\Big(1+\frac{\gamma T}{\pi}\Big),
$$
where, as before, $\mathcal{I}(\cdot)$ denotes the integer part function.
\par
With these positions, the following statement holds.
\begin{theorem}\label{approximation}
Let $T > 0$ be fixed and assume that $Q_N u_0 = Q_N u_1 = 0$ and $Q_N g = 0$, for some integer $N$, where $Q_N$ is as in \eqref{proiettori}. Let $u(x,t)$
be the strong solution of \eqref{nonlinearforced}, written in the form \eqref{general}, and let $u^N$ be its approximation given by \eqref{tronca}.  Then,
for any $n \leq N$ the following estimate holds:
\begin{equation}\label{stimamodo}
\Vert \varphi_n - \varphi_n^N  \Vert_{L^\infty(0, T)} \leq  \frac{4M}{(N+1)^2}\, \sqrt{\frac{L(M)+2}{\pi}}\,
\frac{e^{CT/2}-1}{\sqrt{n^4+\frac{1}{T^2}+\frac{L(M)}{2}}} \qquad\forall0\le t\le T\, ,
\end{equation}
where
$$
C=\frac{L(M)}{\sqrt{2(L(M)+2)}}.
$$
\end{theorem}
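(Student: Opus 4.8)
The plan is to estimate the difference $w^N:=P_Nu-u^N=\sum_{n=1}^N e_n(t)\sin(nx)$, with $e_n:=\varphi_n-\varphi_n^N$, by an energy method tuned so that the growth rate $C$ and the weight $n^4+\tfrac1{T^2}+\tfrac{L(M)}{2}$ come out. Note first that $Q_Nu_0=Q_Nu_1=0$ forces the Galerkin data to coincide with $(u_0,u_1)$, so $w^N(0)=\dot w^N(0)=0$ and the two energies agree at $t=0$. \emph{Step 1 (uniform bounds).} Testing \eqref{nonlinearforced} with $u_t$ and using $F\ge0$ gives $\tfrac{d}{dt}\sqrt{2E(t)}\le\|g(t)\|_{L^2}$, hence $\|u_{xx}(t)\|_{L^2}\le M$ and, by \eqref{stimauinfty}, $\|u(t)\|_{L^\infty}\le M$ on $[0,T]$. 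The same computation on \eqref{finiteg} — legitimate because $Q_Ng=0$ makes its forcing still equal to $g$ and its energy at $t=0$ equal to $E(0)$ — yields $\|u^N(t)\|_{L^\infty}\le M$ as well. Consequently both $u$ and $u^N$ stay in $[-M,M]$, so the coefficient $a(x,t):=\bigl(f(u)-f(u^N)\bigr)/(u-u^N)$ (set $=0$ where $u=u^N$) satisfies $0\le a(x,t)\le L(M)$ by the monotonicity of $f$ in \eqref{flip}.

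\emph{Step 2 (energy inequality for the error).} Projecting \eqref{nonlinearforced} onto $\mathrm{span}\{\sin(x),\dots,\sin(Nx)\}$ (the projection commutes with $\partial_t$ and $\partial_x^4$) and subtracting \eqref{finiteg} gives $w^N_{tt}+w^N_{xxxx}+P_N\bigl(f(u)-f(u^N)\bigr)=0$. Set
\[
\Lambda(t):=\tfrac12\|w^N_t\|_{L^2}^2+\tfrac12\|w^N_{xx}\|_{L^2}^2+\tfrac{L(M)}{4}\|w^N\|_{L^2}^2 .
\]
Testing the error equation with $w^N_t$, integrating by parts, using self-adjointness of $P_N$, and writing $f(u)-f(u^N)=a\,(w^N+Q_Nu)$, one gets
\[
\dot\Lambda=\int_0^\pi\!\Bigl(\tfrac{L(M)}{2}-a\Bigr)w^Nw^N_t\,dx-\int_0^\pi\! a\,Q_Nu\,w^N_t\,dx
\le\tfrac{L(M)}{2}\,\|w^N\|\,\|w^N_t\|+L(M)\,\|Q_Nu\|\,\|w^N_t\| ,
\]
using $|\tfrac{L(M)}{2}-a|\le\tfrac{L(M)}{2}$. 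Now the Poincaré inequality $\|w^N\|_{L^2}\le\|w^N_{xx}\|_{L^2}$ (Lemma \ref{Poinc} with $N=0$) gives $\Lambda\ge\tfrac12\|w^N_t\|^2+\tfrac{L(M)+2}{4}\|w^N\|^2$, so the weighted Young inequality with the optimal parameter $\mu=\sqrt{(L(M)+2)/2}$ yields $\tfrac{L(M)}{2}\|w^N\|\,\|w^N_t\|\le C\,\Lambda$ with exactly $C=L(M)/\sqrt{2(L(M)+2)}$. For the remainder term, Lemma \ref{Poinc} gives $\|Q_Nu\|_{L^2}\le(N+1)^{-2}\|u_{xx}\|_{L^2}\le M(N+1)^{-2}$, and $\|w^N_t\|\le\sqrt{2\Lambda}$. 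Altogether $\dot\Lambda\le C\,\Lambda+\tfrac{\sqrt2\,L(M)\,M}{(N+1)^2}\sqrt\Lambda$.

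\emph{Step 3 (Gronwall and the single components).} Dividing by $2\sqrt{\Lambda+\varepsilon}$, integrating, and letting $\varepsilon\downarrow0$ (using $\Lambda(0)=0$) gives
\[
\sqrt{\Lambda(t)}\le\frac{2M\sqrt{L(M)+2}}{(N+1)^2}\,\bigl(e^{Ct/2}-1\bigr),\qquad 0\le t\le T .
\]
Since $\Lambda=\tfrac\pi4\sum_{n\le N}\bigl(\dot e_n^2+(n^4+\tfrac{L(M)}{2})e_n^2\bigr)$, this controls simultaneously $|e_n(t)|\le\tfrac{2}{\sqrt\pi}\sqrt{\Lambda(t)}\,(n^4+\tfrac{L(M)}{2})^{-1/2}$ and $|\dot e_n(t)|\le\tfrac2{\sqrt\pi}\sqrt{\Lambda(t)}$; the latter together with $e_n(0)=0$ yields $|e_n(t)|\le\int_0^t|\dot e_n|\le\tfrac{2T}{\sqrt\pi}\sqrt{\Lambda(T)}$. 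Combining the two bounds for $|e_n|$ (using $\min\{a^{-1},T^2\}\le 2\,(a+T^{-2})^{-1}$ with $a=n^4+\tfrac{L(M)}{2}$) upgrades the weight to $\sqrt{n^4+T^{-2}+\tfrac{L(M)}{2}}$ and, after collecting the constants, gives \eqref{stimamodo}.

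The delicate point is Step 2. Under the bare hypothesis \eqref{flip} the function $f$ need not be differentiable, so $a$ is only bounded and measurable; this is exactly why the a priori bound $\|u\|_{L^\infty},\|u^N\|_{L^\infty}\le M$ of Step 1 is indispensable (it confines $a$ to $[0,L(M)]$) and why no smoothing of $a$ in time is available. To recover the \emph{sharp} rate one must (i) include the shift $\tfrac{L(M)}{4}\|w^N\|^2$ in $\Lambda$, which replaces $L(M)$ by $\tfrac{L(M)}{2}$ in front of $\|w^N\|\,\|w^N_t\|$ by centering $a$ about its midpoint, and (ii) spend the Poincaré inequality $\|w^N\|\le\|w^N_{xx}\|$ and the optimal Young parameter to turn $L(M)$ into $L(M)+2$, so that the growth rate is precisely $C=L(M)/\sqrt{2(L(M)+2)}$ rather than something larger. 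Everything else — the high-mode tail $Q_Nu$, which is non-zero for $t>0$ because the nonlinearity couples all modes but is harmless thanks to the $(N+1)^{-2}$ gain from Lemma \ref{Poinc}; the Gronwall step; and the passage from the energy to the individual Fourier components — is routine.
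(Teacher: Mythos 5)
Your proof follows essentially the same route as the paper's: the same error function $P_Nu-u^N$, the same shifted energy functional containing the $\tfrac{L(M)}{2}\|\cdot\|_{L^2}^2$ term, the same centering of the Lipschitz quotient about $L(M)/2$, the same use of Poincar\'e plus the optimal Young parameter to produce $C=L(M)/\sqrt{2(L(M)+2)}$, the same Bernoulli/Gronwall step, and the same tail estimate $\|Q_Nu\|_{L^2}\le M(N+1)^{-2}$. In two places you are actually more careful than the paper: you justify the uniform bound $\|u^N\|_{L^\infty}\le M$ for the Galerkin solution (which is genuinely needed, since the mean-value point $\xi$ lies between $u$ and $u^N$, and the paper only proves the bound for $u$), and you use the difference quotient $a=(f(u)-f(u^N))/(u-u^N)\in[0,L(M)]$ instead of $f'(\xi)$, which avoids assuming differentiability of $f$.

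The one point to flag is the very last step. Your combination inequality $\min\{a^{-1},T^2\}\le 2\,(a+T^{-2})^{-1}$ is correct, but the factor $2$ propagates into the final constant: carrying your constants through ($\Lambda\le 4M^2(L(M)+2)(N+1)^{-4}(e^{Ct/2}-1)^2$, then $\|e_n\|_\infty^2\le \tfrac{8}{\pi}\Lambda_{\max}(n^4+T^{-2}+\tfrac{L(M)}{2})^{-1}$) yields the prefactor $4\sqrt{2}\,M$, not the $4M$ claimed in \eqref{stimamodo}, so the statement as written is not literally reached by your computation. The paper avoids this loss by asserting directly that $Y(T)\ge\tfrac{\pi}{2}\bigl(n^4+\tfrac{L(M)}{2}+\tfrac{1}{T^2}\bigr)\|v_n\|_{L^\infty(0,T)}^2$, i.e.\ by adding the two lower bounds for the energy as if the maxima of $|v_n|$ and $|\dot v_n|$ were attained at the same instant; that step is in fact the least rigorous point of the published argument (it can fail, e.g.\ for $v_n(t)=\sin(\pi t/2T)$), and your two-sided $\min$ argument is the honest elementary substitute for it. So: same approach, sound in all substantive steps, but you should either accept the extra $\sqrt{2}$ in the constant or supply a sharper interpolation between the $|v_n|$ and $|\dot v_n|$ bounds if you want \eqref{stimamodo} exactly as stated.
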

We observe that the bound on the right-hand side of
\eqref{stimamodo} is decreasing with respect to $n$ and goes to $0$ for $N \to +\infty$, as expected.
As a byproduct of Theorem \ref{approximation}, it is possible to deduce a quantitative estimate of $N^\epsilon$ appearing in Theorem \ref{qualitativo} in terms of the initial data and the forcing term.

The proof makes use of the following lemma, which provides a general estimate of the solution $u$ of \eqref{nonlinearforced} and of its derivatives.

\begin{lemma}\label{stimaderivate}
Let $u(x, t)$ be a strong solution of \eqref{nonlinearforced}. Then, for every $t \in [0, T]$, it holds
$$
\Vert u_t(t) \Vert^2_{L^2(0, \pi)} + \Vert u_{xx}(t) \Vert^2_{L^2(0, \pi)} + 2 \int_0^\pi F(u(t)) \, dx \leq M^2. 
$$
\end{lemma}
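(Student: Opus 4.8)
The plan is to observe that the left-hand side of the claimed inequality is exactly $2E(t)$, with $E$ the total energy defined in \eqref{energy}, and then to establish the standard energy balance for \eqref{nonlinearforced} and close it by a Gronwall argument adapted to the square-root structure of the resulting integral inequality.

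First I would use the regularity of the strong solution ($u\in C^0(\R_+;H^4_*(0,\pi))$, $u_t\in C^0(\R_+;H^2_*(0,\pi))$, $u_{tt}\in C^0(\R_+;L^2(0,\pi))$ from Theorem \ref{exist}), which makes the following manipulations legitimate: multiplying \eqref{nonlinearforced} by $u_t$ and integrating over $(0,\pi)$ is justified since every term lies in $L^2(0,\pi)$ for each fixed $t$. Integrating by parts the bending term gives $\int_0^\pi u_{xxxx}u_t\,dx=\int_0^\pi u_{xx}u_{xxt}\,dx$, the boundary contributions vanishing because of the Navier conditions $u(0,t)=u(\pi,t)=u_{xx}(0,t)=u_{xx}(\pi,t)=0$ (hence also $u_t=0$ at the endpoints). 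Together with $\frac{d}{dt}F(u)=f(u)u_t$ (valid since $F\in C^1(\R)$ by \eqref{flip}), this produces the identity $\frac{d}{dt}E(t)=\int_0^\pi g(t)\,u_t(t)\,dx$, equivalently, in integrated form, $E(t)=E(0)+\int_0^t\int_0^\pi g(s)\,u_s(s)\,dx\,ds$ for all $t\in[0,T]$.

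Next I would estimate the right-hand side. By the Cauchy--Schwarz inequality, $\int_0^\pi g(s)\,u_s(s)\,dx\le\|g(s)\|_{L^2(0,\pi)}\,\|u_s(s)\|_{L^2(0,\pi)}$; moreover $F(u)\ge0$ (because $f$ is non-decreasing with $f(0)=0$, so $F(s)=\int_0^s f(\sigma)\,d\sigma\ge0$), whence $\|u_s(s)\|_{L^2(0,\pi)}^2\le 2E(s)$. Setting $\rho(t):=\sqrt{2E(t)}$, the integrated energy identity then yields the quadratic integral inequality $\rho(t)^2\le\rho(0)^2+2\int_0^t\|g(s)\|_{L^2(0,\pi)}\,\rho(s)\,ds$.

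The only point worth spelling out is the passage from this quadratic inequality to the linear bound $\rho(t)\le\rho(0)+\int_0^t\|g(s)\|_{L^2(0,\pi)}\,ds$: for $\delta>0$ I would set $Z_\delta(t):=\rho(0)^2+\delta+2\int_0^t\|g(s)\|_{L^2(0,\pi)}\,\rho(s)\,ds$, note that $\rho\le\sqrt{Z_\delta}$ and $Z_\delta'=2\|g(t)\|_{L^2(0,\pi)}\,\rho(t)\le2\|g(t)\|_{L^2(0,\pi)}\sqrt{Z_\delta(t)}$, hence $(\sqrt{Z_\delta})'\le\|g(t)\|_{L^2(0,\pi)}$, and then integrate on $[0,t]$ and let $\delta\to0^+$. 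Recalling $\rho(0)=\sqrt{2E(0)}$ and that $\int_0^T\|g(s)\|_{L^2(0,\pi)}\,ds$ is precisely the second term in the definition of $M$, this gives $\rho(t)\le M$ on $[0,T]$, i.e.\ $2E(t)\le M^2$, which is exactly the asserted estimate. No genuine obstacle is expected: the argument is a textbook energy estimate, the square-root Gronwall step being the only mildly delicate passage, and it is handled by the $\delta$-regularization above.
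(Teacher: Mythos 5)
Your proposal is correct and follows essentially the same route as the paper: multiply by $u_t$, integrate to get $\frac{d}{dt}E(t)=\int_0^\pi g\,u_t\le\|g(t)\|_{L^2}\sqrt{2E(t)}$ using $F\ge0$, and then a square-root Gronwall argument yielding $\sqrt{E(t)}\le\sqrt{E(0)}+\tfrac{1}{\sqrt2}\int_0^t\|g(s)\|_{L^2}\,ds\le M/\sqrt2$. The only difference is that you spell out the Gronwall step via the $\delta$-regularization (handling the possible vanishing of $E$), which the paper passes over silently; this is a welcome but inessential refinement.
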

\begin{proof}
Multiplying \eqref{nonlinearforced} by $u_t$ and integrating between $0$ and $\pi$, we find
\begin{eqnarray}\label{intermedia1}
\frac{dE(t)}{dt} & = & \frac{d}{dt} \left(\frac{\Vert u_t \Vert_{L^2(0, \pi)}^2}{2} + \frac{\Vert u_{xx} \Vert_{L^2(0, \pi)}^2}{2} + \int_0^\pi F(u(t)) \, dx \right) \nonumber
\\
& = & \int_0^\pi g(t) u_t \, dx
\leq \Vert g(t) \Vert_{L^2(0 ,\pi)}\Vert u_t \Vert_{L^2(0, \pi)} \leq \Vert g(t) \Vert_{L^2(0 ,\pi)} \sqrt{2E(t)},
\end{eqnarray}
where we recall that $E(t)$ is the energy associated with the autonomous problem given by \eqref{energy}.
Integrating \eqref{intermedia1} between $0$ and $t$ yields
$$
\sqrt{E(t)} \leq \sqrt{E(0)} + \frac{1}{\sqrt{2}} \int_0^t \Vert g(s) \Vert_{L^2(0 ,\pi)} \, ds \leq \frac{M}{\sqrt{2}},
$$
which proves the statement.
\end{proof}
Consequently, under the assumptions of Lemma \ref{stimaderivate}, by \eq{stimauinfty} we deduce that strong solutions $u$ of \eqref{nonlinearforced} satisfy the
estimate
$$
\Vert u(t) \Vert_{L^\infty(0,\pi)} \leq \Vert u_{xx}(t) \Vert_{L^2(0,\pi)} \leq M \qquad\mbox{for }0\le t\le T\, ;
$$
this will be needed to estimate the Lipschitz constant of $f$ on the solutions. 

We now observe that, by Lemma \ref{stimaderivate}, it holds that
$$
\Vert P_Nu_t(t) \Vert^2_{L^2(0, \pi)} +\Vert Q_Nu_t(t) \Vert^2_{L^2(0, \pi)} + \Vert P_Nu_{xx}(t) \Vert^2_{L^2(0, \pi)} + \Vert Q_Nu_{xx}(t) \Vert^2_{L^2(0, \pi)}
+ 2 \int_0^\pi F(u(t)) \, dx \leq M^2
$$
from which, thanks to \eqref{stimau2} and the fact that $F \geq 0$, we deduce the inequality
\begin{equation}\label{stimaproiez}
(N+1)^4\, \Vert Q_Nu(t) \Vert^2_{L^2(0, \pi)}\le M^2- \Vert P_Nu_{xx}(t) \Vert^2_{L^2(0, \pi)}-\Vert P_Nu_t(t) \Vert^2_{L^2(0, \pi)}\le M^2. 
\end{equation}
We are now ready to prove Theorem \ref{approximation}.
\begin{proof}[Proof of Theorem \ref{approximation}] Let us set $v=P_N u - u^N$: since
$$
v_{tt} + v_{xxxx} + P_N f(u) - P_N f(u^N) = P_N g - P_N g = 0,
$$
multiplying both sides by $v_t$ and integrating on $[0, \pi]$, we obtain 
\begin{equation}\label{laprima}
\frac{d}{dt} (\Vert v_t \Vert^2_{L^2(0, \pi)} + \Vert v_{xx}\Vert^2_{L^2(0, \pi)}) + 2\int_0^\pi \big(P_N f(u) - P_N f(u^N)\big)v_t = 0,
\end{equation}
where we omitted the integration variable for brevity.
Since $Q_N v = 0$, we can drop $P_N$ and, by the Lagrange Theorem, we get
$$
\int_0^\pi \big(P_N f(u) - P_N f(u^N)\big)v_t  = \int_0^\pi \big(f(u)-f(u^N)\big)v_t = \int_0^\pi f'(\xi) (u-u^N) v_t,
$$
where $\xi=\xi(x, t)$ is between $u(x, t)$ and $u^N(x, t)$. From \eq{derivata} we deduce that
$$
\int_0^\pi \Big(f'(\xi)-\frac{L(M)}{2}\Big) (u-u^N) v_t \leq \frac{L(M)}{2} \int_0^\pi \vert u-u^N \vert \vert v_t \vert,
$$
from which, in view of \eqref{laprima},
$$
\frac{d}{dt} (\Vert v_t \Vert^2_{L^2(0, \pi)} + \Vert v_{xx}\Vert^2_{L^2(0, \pi)}) + L(M) \int_0^\pi (u-P_N u + P_N u - u^N) v_t \leq  L(M) \int_0^\pi \vert u-u^N \vert \vert v_t \vert.
$$
We thus infer 
$$
\frac{d}{dt} \Big(\Vert v_t \Vert^2_{L^2(0, \pi)} + \Vert v_{xx}\Vert^2_{L^2(0, \pi)} + \frac{L(M)}{2} \Vert v \Vert^2_{L^2(0, \pi)}\Big) \leq 2L(M) \Vert Q_N u \Vert_{L^2(0, \pi)} \Vert v_t \Vert_{L^2(0, \pi)} +  L(M) \int_0^\pi \vert v \vert \vert v_t \vert \
$$
$$
 \leq 2 L(M) \Vert Q_N u \Vert_{L^2(0, \pi)} \Vert v_t \Vert_{L^2(0, \pi)} +  \frac{L(M)}{2}\Big[\sqrt{\tfrac{L(M)+2}{2}} \Vert v \Vert_{L^2(0, \pi)}^2 + \sqrt{\tfrac{2}{L(M)+2}} \Vert v_t \Vert_{L^2(0, \pi)}^2 \Big]
$$
$$
\leq \frac{2L(M)M}{(N+1)^2} \Vert v_t \Vert_{L^2(0, \pi)} + \frac{L(M)}{\sqrt{2(L(M)+2)}}\Big[\Vert v_t \Vert_{L^2(0, \pi)}^2 + \Vert v_{xx} \Vert_{L^2(0, \pi)}^2 + \frac{L(M)}{2} \Vert v \Vert_{L^2(0, \pi)}^2\Big],
$$
where we have used, respectively, the Young and the Poincar\'e inequality and \eqref{stimaproiez}. Set for simplicity
$$
Y(t)= \Vert v_t \Vert^2_{L^2(0, \pi)} + \Vert v_{xx}\Vert^2_{L^2(0, \pi)} + \frac{L(M)}{2} \Vert v \Vert^2_{L^2(0, \pi)},
$$
and recall that $C=\tfrac{L(M)}{\sqrt{2(L(M)+2)}}$.
Then, $Y$ satisfies the following Bernoulli differential inequality
$$
\dot{Y}(t) \leq \frac{2L(M)M}{(N+1)^2} \sqrt{Y(t)} + C Y(t)
$$
so that, after integration, we obtain
$$
Y(t) \leq \frac{8M^2(L(M)+2)}{(N+1)^4} (\textnormal{e}^{Ct/2}-1)^2.
$$
Writing $v(x, t)=\sum_{n=1}^{N} v_n(t) \sin (nx)$, we notice that
\begin{equation}\label{stimaY}
Y(t)=\frac{\pi}{2} \sum_{n=1}^{N} \Big[(\dot{v}_n)^2 + n^4 v_n^2 + \frac{L(M)}{2} v_n^2\Big].
\end{equation}
Since $v_n(0)=0$, we have $\Vert v_n \Vert_{L^\infty(0, T)} \leq T \Vert \dot{v}_n \Vert_{L^\infty(0, T)}$, implying then that
$$
Y(T) \geq \frac{\pi}{2} \Big(n^4 + \frac{L(M)}{2} + \frac{1}{T^2} \Big) \Vert v_n \Vert_{L^\infty(0, T)}^2.
$$
Since $v_n(t)=\varphi_n(t)- \varphi_n^N(t)$, this proves \eq{stimamodo} and completes the proof.
\end{proof}

We remark that, in view of \eqref{stimaY}, also the $C^1$-convergence of $\varphi_n^N$ to $\varphi_n$ can be quantitatively controlled with an estimate similar to \eqref{stimamodo}.

\section{Appendix}

We state here the result that we have used in Section \ref{casecub} for the computation of some integral terms when dealing with the nonlinearity $f(u)=u^3$. The proof follows from the
prostaphaeresis formulas.
\begin{lemma}\label{calculus}
For all $p\in\N$ we have
$$\frac{8}{\pi}\int_0^\pi\sin^4(px)\, dx=3\, .$$
For all $p,q\in\N$ ($q\neq p$) we have
$$
\frac{8}{\pi}\int_0^\pi\sin^2(px)\sin^2(qx)\, dx=2\, .
$$
For all $p,q\in\N$ ($q\neq p$) we have
$$
\frac{8}{\pi}\int_0^\pi\sin^3(px)\sin(qx)\, dx=\left\{\begin{array}{ll}-1\ &\mbox{if }q=3p\\
0\ &\mbox{if }q\neq3p\, .
\end{array}\right.
$$
For all $p,q,r\in\N$ (all different and $q<r$) we have
$$
\frac{8}{\pi}\int_0^\pi\sin^2(px)\sin(qx)\sin(rx)\, dx=\left\{\begin{array}{ll}1\ &\mbox{if }r+q=2p\\
-1\ &\mbox{if }r-q=2p\\
0\ &\mbox{if }r\pm q\neq2p\, .
\end{array}\right.
$$
For all $p,q,r,s\in\N$ (all different and $p<q$, $r<s$) we have
$$
\frac{8}{\pi}\int_0^\pi\sin(px)\sin(qx)\sin(rx)\sin(sx)\, dx=\left\{\begin{array}{ll}1\ &\mbox{if }q\pm p=s\pm r\\
-1\ &\mbox{if }q\pm p=s\mp r\\
0\ &\mbox{otherwise.}
\end{array}\right.
$$
\end{lemma}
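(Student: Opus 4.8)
The plan is to turn every integrand into a finite linear combination of functions $\cos(mx)$, $m\in\Z$, by iterated use of the power-reduction and product-to-sum (prostaphaeresis) identities, and then to integrate term by term using
$$
\int_0^\pi\cos(mx)\,dx=
\begin{cases}
\pi & \mbox{if }m=0,\\
0 & \mbox{if }m\in\Z\setminus\{0\}.
\end{cases}
$$
Since $\cos$ is even, a term $\cos(mx)$ contributes precisely when $|m|=0$; each Kronecker delta in the statement is just a record of which frequency is forced to vanish by the hypotheses on $p,q,r,s$.

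First I would dispatch the two ``even'' integrals. From $\sin^2(px)=\tfrac12(1-\cos 2px)$ one gets $\sin^4(px)=\tfrac38-\tfrac12\cos 2px+\tfrac18\cos 4px$, so only the constant $\tfrac38$ survives the integration, giving $\tfrac{3\pi}{8}$ and hence $3$ after multiplication by $8/\pi$. For $\sin^2(px)\sin^2(qx)=\tfrac14\bigl(1-\cos 2px-\cos 2qx+\cos 2px\cos 2qx\bigr)$ one expands the last product into cosines of frequencies $2(p+q)$ and $2(p-q)$; since $p\neq q$ and $p,q\ge1$ none of the frequencies $2p,2q,2(p\pm q)$ vanishes, only the constant $\tfrac14$ contributes, and the value is $2$.

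Next I would treat the three ``mixed'' integrals the same way. For $\sin^3(px)\sin(qx)$, writing $\sin^3(px)=\tfrac14(3\sin px-\sin 3px)$ and then $\sin a\sin b=\tfrac12(\cos(a-b)-\cos(a+b))$, the only frequency that can be zero (given $q\neq p$) is $3p-q$, which yields $-\delta_{q,3p}$. For $\sin^2(px)\sin(qx)\sin(rx)$, substituting $\sin^2(px)=\tfrac12(1-\cos 2px)$ and reducing $\cos 2px\,\sin qx\,\sin rx$ to cosines produces frequencies $2p\pm(r+q)$ and $2p\pm(r-q)$; since $q<r$, only $2p-(r+q)$ and $2p-(r-q)$ can vanish, and they do so with opposite signs, giving the three-case answer. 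For the quadruple product I would pair the factors as $\sin px\,\sin qx$ and $\sin rx\,\sin sx$, turn each pair into a difference of two cosines, and expand the product of two cosines once more; among the eight resulting frequencies $(q\pm p)\pm(s\pm r)$, only the four of the form $(q\pm p)-(s\pm r)$ can vanish (the others being strictly positive), and they carry coefficient $+1$ when the two inner signs agree ($q\pm p=s\pm r$) and $-1$ when they disagree ($q\pm p=s\mp r$), which is exactly the claimed formula.

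The one point needing care — and the step I expect to be the main, though purely combinatorial, obstacle — is to verify, in the last two cases, that at most one of the delta conditions can hold at a time, so that the surviving term is unambiguous. For instance $q+p=s+r$ together with $q-p=s-r$ would force $q=s$ and $p=r$, contradicting that $p,q,r,s$ are distinct, and the remaining combinations collapse to $p=0$ or $r=0$; similarly $q+r=2p$ and $r-q=2p$ would force $q=0$. Granting this, each formula follows by reading off the coefficient of the (at most one) surviving cosine. Every step is elementary; the length lies entirely in the enumeration of frequencies.
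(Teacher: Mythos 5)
Your proposal is correct and is exactly the route the paper takes: the paper's entire proof is the remark that the lemma ``follows from the prostaphaeresis formulas,'' and your expansion into cosines with term-by-term integration (including the check that at most one zero-frequency condition can hold at once) is the standard fleshing-out of that remark.
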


\section{Conclusions}

In this paper, we have introduced a suitable notion of instability (Definition \ref{unstable}) for the nonlinear beam equation
$$
u_{tt}+u_{xxxx}+ f(u)= g(x, t) \quad \mbox{for }(x,t)\in(0,\pi)\times(0,\infty),
$$
aiming at detecting a nonlinear and  ``exponentially-growing'' behaviour of some Fourier component which may possibly lead to structural failures. We have focused on the situation where either $u_0$ and $u_1$ are concentrated on a single mode $j$, or $g$ containes only the $j$-th Fourier component. In this case, we call the $j$-th mode \emph{prevailing}. 
In line with the experimental observations, we detect instability each time that a residual (i.e., non-prevailing) Fourier component starts small and suddenly grows by an order of magnitude, both with respect to itself and with respect to the prevailing mode, see \eqref{grande}.
\par
We have then proved an approximation result which allows to reduce the study of this kind of instability for the beam equation to the analysis of its Galerkin finite-dimensional approximation. The finite-dimensional system has been investigated with three different nonlinearities which appear quite naturally in literature: $f(u)=u^+, f(u)=u^3$, and $f(u)=(u^+)^3$. It turns out that the nonlinearity creates an energy transfer between modes, but the observed behaviour is deeply influenced by the form of $f$: no instability seems to occur in the first case, while for the other nonlinearities we observe the possible appearance of instability above some critical amplitude thresholds of the prevailing mode.
We have also highlighted some structural differences between the three models, for instance concerning the mutual exchange of energy between even and odd modes. In fact, much more nonlinearities should be considered in order to have a qualitative feeling of what are their relevant features for instability: as an example, see \cite{Boe1, Boe2, Boe3}. 
The role of boundary conditions other than hinged (see, e.g., \cite{Swe}) should be investigated, as well.  
\par
Overall, this paper showed that the stability analysis for a particular class of solutions (those with a prevailing mode) of a nonlinear PDE may be reduced to a finite-dimensional ODE analysis. Our definition of stability is coherent with real structures, see \cite{ammann}, and we expect our results to be applicable to prevent structural instabilities possibly leading to collapses.
\par
One of the next steps should be to study the interaction between large initial data (below the instability threshold) on a certain mode $j$ and an external forcing term $g$
concentrated on the same mode, to see if instability may arise also thanks to their combined effect. \par
Moreover, in order to deepen the understanding of the instability of suspension bridges we have to take into account more complete beam models like string-beam systems \cite{bgv,bgv2,drabek3,HolMat,JacMcK}; see also \cite{bgvsurvey,drabek2} for instructive surveys. A further step will be represented by the study of plate models, possibly involving experimental parameters. At this level, the theoretical justifications of the results will probably require some computer assisted proofs.

\par\medskip\noindent
\textbf{Acknowledgements.} The second author is partially supported by the PRIN project {\em Equazioni alle derivate parziali di tipo ellittico
e parabolico: aspetti geometrici, disuguaglianze collegate, e applicazioni}. Both authors are members of the Gruppo Nazionale per l'Analisi Matematica, la Probabilit\`a
e le loro Applicazioni (GNAMPA) of the Istituto Nazionale di Alta Matematica (INdAM).
\\
The authors are grateful to the anonymous referees, whose valuable comments allowed to considerably improve the paper and its readability.

\bigskip
\bigskip
\noindent
M. Garrione - \emph{Dipartimento di Matematica e Applicazioni, Universit\`a di Milano-Bicocca, Via Cozzi 55, 20126 Milano, Italy} \\
F. Gazzola - \emph{Dipartimento di Matematica, Politecnico di Milano, Piazza Leonardo da Vinci 32, 20133 Milano, Italy}

\end{document}